\documentclass[graybox,natbib,nospthms,vecarrow]{SNmult}

\usepackage{type1cm}          
\usepackage{makeidx}          
\usepackage{graphicx}         
\usepackage{multicol}         
\usepackage[bottom]{footmisc} 
\usepackage{microtype}        

\usepackage{amsmath,amssymb,amsfonts,graphicx,bm,amsthm,mathrsfs}
\usepackage{soul}
\usepackage[colorlinks=true]{hyperref}
\definecolor{ultramarine}{rgb}{0 , 0, 0.3}
\definecolor{bulgarianrose}{rgb}{0.3, 0, 0}
\hypersetup{citecolor={ultramarine},
urlcolor={ultramarine},linkcolor={bulgarianrose}}
\usepackage{cleveref}
\usepackage{appendix}
\usepackage{pstricks}

\usepackage{enumitem}
\usepackage{latexsym}
\usepackage{stmaryrd}
\usepackage{bussproofs}
\usepackage{pstricks}
\usepackage{aliascnt}

\numberwithin{equation}{section} 
\newtheorem{theorem}{Theorem}[section]
\newaliascnt{lemma}{theorem}  
\newtheorem{lemma}[lemma]{Lemma}  
\aliascntresetthe{lemma}  
  
\newaliascnt{corollary}{theorem}  
\newtheorem{corollary}[corollary]{Corollary}  
\aliascntresetthe{corollary}  
 
\newaliascnt{proposition}{theorem}  
  
\aliascntresetthe{proposition}  
 
\newtheoremstyle{exampstyle}
{\topsep} 
{\topsep} 
{} 
{} 
{\bfseries} 
{.} 
{.5em} 
{} 
\theoremstyle{exampstyle}
\newaliascnt{fact}{theorem}  
  
\aliascntresetthe{fact}  
 
\newaliascnt{claim}{theorem}  
\newtheorem{claim}[claim]{Claim}  
\aliascntresetthe{claim}  
 
\newaliascnt{remark}{theorem}  
\newtheorem{remark}[remark]{Remark}  
\aliascntresetthe{remark}  
 
\newaliascnt{notation}{theorem}  
  
\aliascntresetthe{notation}  
 
\newaliascnt{example}{theorem}  
\newtheorem{example}[example]{Example}  
\aliascntresetthe{example}  
 
\newaliascnt{conjecture}{theorem}  
  
\aliascntresetthe{conjecture}  
 
\newaliascnt{question}{theorem}  
\newtheorem{question}[question]{Question}  
\aliascntresetthe{question}  
 
\newaliascnt{definition}{theorem}  
\newtheorem{definition}[definition]{Definition}  
\aliascntresetthe{definition}  
 
\newaliascnt{convention}{theorem}  
  
\aliascntresetthe{definition}

\newcommand{\shabox}{\,
                    \setlength{\unitlength}{1ex}
                    \begin{picture}(2,2)
                    \put(0,0){$\Box$}
                    \put(.45,-.05){\line(1,0){1.22}}
                    \put(.4,0){\line(1,0){1.22}}
                    \put(.35,.05){\line(1,0){1.22}}
                    \put(.3,.1){\line(1,0){1.22}}
                    \put(.25,.15){\line(1,0){1.22}}
                    \put(1.65,-.05){\line(0,1){1.22}}
                    \put(1.6,0){\line(0,1){1.22}}
                    \put(1.55,.05){\line(0,1){1.22}}
                    \put(1.5,.1){\line(0,1){1.22}}
                    \put(1.45,.15){\line(0,1){1.22}}
                    \end{picture}
                    \!}

\renewcommand{\qedsymbol} {\ensuremath{\shabox}}

\def\kcal{\mathcal{K}}

\def\nin{\not\in}

\def\PA{{\sf PA}}

\def\GL{{\sf GL}}

\def\R{\sqsubset}

\def\lcal{\mathcal{L}}

\newcommand{\Ax}{\AxiomC}
\newcommand{\UI}{\UnaryInfC}
\newcommand{\BI}{\BinaryInfC}

\newcommand{\LLa}{\LeftLabel}

\newcommand{\DP}{\DisplayProof}

\newcommand{\lr }{\leftrightarrow}

\newcommand{\myref}{biblio.bib}

\newcommand{\vdashsub}[1]{\vdash_{_{\!\!{#1}}}}

\def\nmodels{\not\models}
\newcommand{\myending}{\bibliographystyle{spbasic}%
\bibliography{\myref}%
\end{document}%
}
\newcommand{\myheading}[1]{%
\begin{document}
\title{#1}
\author{
Mojtaba Mojtahedi\thanks{\url{http://mmojtahedi.ir}}\\
Ghent University\\
Department of Mathematics: Analysis, Logic and Discrete Mathematics}
\maketitle
}

\usepackage{cleveref}
\usepackage{todonotes}

\newcommand{\giodo}[2]
{\todo[color=red, #1]{#2}}

\DeclareMathOperator{\dom}{\mathrm{dom}}
\DeclareMathOperator{\ran}{\mathrm{ran}}
\DeclareMathOperator{\set}{\mathrm{set}}

\newcommand{\rst}{{\restriction}}
\newcommand{\conc}{^{\smallfrown}}
\newcommand{\imp}{\rightarrow}
\newcommand{\biimp}{\mathrel{\leftrightarrow}}
\newcommand{\la}{\langle}

\newcommand{\On}{\mathsf{On}}

\newcommand{\rca}{\mathsf{RCA}_0}
\newcommand{\prso}{\mathsf{PRS}\omega}
\newcommand{\prsou}{\mathsf{PRS\omega U}}
\newcommand{\zfu}{\mathsf{ZFU}}
\newcommand{\atr}{\mathsf{ATR}_0}
\newcommand{\pica}{\Pi^1_1\mbox{-}\mathsf{CA}_0}

\newcommand{\kp}{\mathsf{KP}}
\newcommand{\kpu}{\mathsf{KPU}}

\newcommand{\ttt}{\mathsf{T}}

\DeclareMathOperator{\otp}{{o.t.}}
\DeclareMathOperator{\rank}{{rk}}
\DeclareMathOperator{\prim}{{Pr}}

\newcommand{\barprec}{\triangleleft}

\DeclareMathOperator{\tc}{TC}

\newcommand{\adma}{\mathbb{A}}
\newcommand{\langl}{\mathcal{L}}
\newcommand{\fragma}{\mathsf{L}_{\mathbb{A}}}
\newcommand{\LSigma}{\Sigma}
\newcommand{\Var}{\mathsf{Var}}

\newcommand{\fixp}{\text{Fix}}
\newcommand{\appr}{\text{App}}

\newcommand{\bew}{\mathsf{Bew}}
\newcommand{\prov}{\mathsf{Pr}}
\newcommand{\proov}{\mathsf{Proof}}

\newbox\gnBoxA
\newdimen\gnCornerHgt
\setbox\gnBoxA=\hbox{$\ulcorner$}
\global\gnCornerHgt=\ht\gnBoxA
\newdimen\gnArgHgt
\def\Godelnum #1{%
\setbox\gnBoxA=\hbox{$#1$}%
\gnArgHgt=\ht\gnBoxA%
\ifnum     \gnArgHgt<\gnCornerHgt \gnArgHgt=0pt%
\else \advance \gnArgHgt by -\gnCornerHgt%
\fi \raise\gnArgHgt\hbox{$\ulcorner$} \box\gnBoxA %
\raise\gnArgHgt\hbox{$\urcorner$}}
\usepackage{mathtools}
\newcommand{\IGL}[1]{\GL_{#1}}
\def\GLA{\IGL{\omega_1}}
\newcommand{\HGL}[1]{\mathcal{H}\GL_{#1}}
\newcommand{\hgl}{\mathcal{H}\GL}
\newcommand{\SGL}[1]{\mathcal{H}\GL_{#1}}
\def\SGLA{\SGL{\omega_1}}
\def\HGLA{\HGL{\omega_1}}
\def\HKA{\mathcal{H}\sfk_{\omega_1}}
\def\HKFA{\mathcal{H}\kfour_{\omega_1}}
\def\atom{{\sf atom}}
\newcommand{\lang}[1]{\lcal^{#1}}
\newcommand{\langz}[1]{\lcal_{#1}}
\newcommand{\langb}[1]{\lcal^\Box_{#1}}
\newcommand{\langn}[1]{\lcal^{[\,]}_{#1}}
\def\langza{\langz{\omega_1}}
\def\langa{\lang{\omega_1}}
\def\langba{\langb{\omega_1}}
\def\langna{\langn{\omega_1}}
\newcommand{\invert}[1]{\overline{#1}}
\def\aset{\mathcal{A}}
\def\CLA{\CL_{\omega_1}}
\def\SCLA{\mathcal{S}\CL_{\omega_1}}
\def\HCLA{\mathcal{H}\CL_{\omega_1}}
\def\CL{{\sf CL}}
\def\vdashna{\vdash_{_{\!\!\mathcal{N}\!\mathcal{A}}}}
\def\vdashsa{\vdash_{_{\!\!\mathcal{S}\!\mathcal{A}}}}
\def\vdashha{\vdash_{\scriptscriptstyle{\HCLA}}}
\def\natcla{\text{$\mathcal{N}\CLA$}}
\def\Con{{\sf Con}}
\newcommand{\con}[1]{\Con(#1)}
\newcommand{\decom}[2]{\bar\Gamma({#1},{#2})}
\newcommand{\mdecom}[2]{\Gamma({#1},{#2})}
\newcommand{\ldecom}[2]{\Gamma^\lozenge({#1},{#2})}
\newcommand{\bdecom}[2]{\Gamma^\Box({#1},{#2})}
\def\sfk{{\sf K}}
\def\kfour{{\sf K4}}
\def\seq{\mathcal{S}}
\newcommand{\inter}[1]{{#1}^*}
\newcommand{\seqsub}[2]{{#1}\{#2\}}
\newcommand{\gsub}[1]{\seqsub\Gamma{#1}}
\newcommand{\cut}[1]{{#1}^{^{{\text{cut}}}}}
\def\dskk{\mathcal{S}{\sf K}_{\omega_1}}
\def\dskkc{\mathcal{S}{\sf K}_{\omega_1}^{{{\text{cut}}}}}
\def\dsk{\mathcal{S}\kfour_{\omega_1}}
\def\dskc{\mathcal{S}{\sf K4}_{\omega_1}^{{{\text{cut}}}}}
\def\dgla{{\mathcal{S}\GL_{\omega_1}^{{\!\infty}}}}
\def\dglac{\mathcal{S}\GL_{\omega_1}^{{\!\infty,\text{cut}}}}
\def\ugam{{}}
\def\nsub{\Subset}
\newcommand{\occurrence}[3]{{#1}^{\scriptscriptstyle{{#2}}}_{\scriptscriptstyle{\!{#3}}}}
\newcommand{\osig}[1]{\occurrence{\sigma}{#1}{}}
\newcommand{\ftinysub}[1]{f_{\scriptscriptstyle{#1}}}
\def\fgam{\ftinysub{\Gamma}}
\def\Rt{\R^t}
\def\tR{\sqsupset^t}
\def\sft{{\sf T}}
\newcommand{\sub}[1]{{\sf sub}(#1)}
\def\root{{\sf root}}
\newcommand{\rankd}[1]{r({#1})}
\newcommand{\reachable}[3]{{#1}\R_{#2}{#3}}
\def\EV{\mathrel{\textup{E}}}
\def\SUP{S}
\def\kmodel{\kcal=(W,\R,\EV,\SUP)}

\begin{document}
\title*{Infinitary provability logic}
\titlerunning{Infinitary provability logic}
\author{Mojtaba Mojtahedi, Fedor Pakhomov, and Giovanni Sold\`a}
\authorrunning{M. Mojtahedi, F. Pakhomov, G. Sold\`a}
\institute{Mojtaba Mojtahedi \at Department of Mathematics: Analysis, Logic and Discrete Mathematics, Ghent University, Krijgslaan 281 S8, 9000 Ghent, Belgium, \email{Mojtaba.Mojtahedi@UGent.be}
\and Fedor Pakhomov \at Department of Mathematics: Analysis, Logic and Discrete Mathematics, Ghent University, Krijgslaan 281 S8, 9000 Ghent, Belgium; and Steklov Mathematical Institute of the Russian Academy of Sciences, Ulitsa Gubkina 8, Moscow 117966, Russia, \email{fedor.pakhomov@ugent.be}
\and Giovanni Sold\`a \at Department of Mathematics: Analysis, Logic and Discrete Mathematics, Ghent University, Krijgslaan 281 S8, 9000 Ghent, Belgium, \email{giovanni.a.solda@gmail.com}}

\maketitle

\abstract*{G\"odel-L\"ob provability logic $\GL$ is a propositional modal system that on one hand enjoys completeness with respect to conversely well-founded Kripke frames and on the other hand captures all modal principles about $\PA$-provability that are provable in $\PA$ itself. In the present paper we carry out an initial investigation into the question of what the infinitary counterpart of $\GL$  is. We develop a non-well-founded deep inference proof system $\dgla$ for the modal language with at most countably infinite conjunctions and disjunctions. We show that the calculus is sound and complete for well-founded transitive Kripke frames. Using Kripke-Platek set theory we develop an interpretation of the infinitary modal language in terms of infinitary provability over admissible sets. Then we show that a natural Hilber-style variant of infinitary $\GL$ is sound for this interpretation. We leave open, however, the question if $\dgla$ proves any additional theorems in comparison with the Hilbert-style calculus. Nevertheless, under certain conditions we do show that the infinitary provability logic arising from certain admissible sets lies between the set of theorems of the Hilbert-style calculus and the non-well-founded deep inference system.}

\abstract{G\"odel-L\"ob provability logic $\GL$ is a propositional modal system that on one hand enjoys completeness with respect to conversely well-founded Kripke frames and on the other hand captures all modal principles about $\PA$-provability that are provable in $\PA$ itself. In the present paper we carry out an initial investigation into the question of what the infinitary counterpart of $\GL$  is. We develop a non-well-founded deep inference proof system $\dgla$ for the modal language with at most countably infinite conjunctions and disjunctions. We show that the calculus is sound and complete for well-founded transitive Kripke frames. Using Kripke-Platek set theory we develop an interpretation of the infinitary modal language in terms of infinitary provability over admissible sets. Then we show that a natural Hilber-style variant of infinitary $\GL$ is sound for this interpretation. We leave open, however, the question if $\dgla$ proves any additional theorems in comparison with the Hilbert-style calculus. Nevertheless, under certain conditions we do show that the infinitary provability logic arising from certain admissible sets lies between the set of theorems of the Hilbert-style calculus and the non-well-founded deep inference system.
\keywords{Provability Logic $\cdot$ Infinitary Logic $\cdot$ Deep Inference $\cdot$ Non-well-founded proofs $\cdot$ Kripke--Platek set theory}}

\section{Introduction}
Propositional modal logic $\GL$ is a normal modal logic extending the modal logic $\kfour$ of transitive Kripke frames by L\"ob's axiom $\Box (\Box \varphi \to \varphi)\to \Box \varphi$. Its significance arises primarily from its arithmetical semantics, where $\Box$ is interpreted as the formalized provability predicate for $\PA$. The celebrated result of R.~Solovay \citeyearpar{solovay1976provability} states that a modal formula $\varphi$ is a theorem of $\GL$ if and only if it expresses a universally $\PA$-provable principle about $\PA$-provability. $\GL$ possesses a well-behaved Kripke semantics \citep{segerberg1971essay}: it axiomatizes the class of transitive conversely well-founded Kripke frames and furthermore is complete with respect to the class of finite strict partial orders. On the structural proof-theory side of research, there is a traditional-style sequent calculus for $\GL$ enjoying a cut-elimination theorem \citep{avron1984modal}. Finally, there is a cyclic sequent calculus for $\GL$ that enjoys cut-elimination and is obtained from a traditional sequent calculus for $\kfour$ by allowing finite cyclic proofs with an occurrence of at least one $\Box$-rule along every loop \citep{shamkanov2014circular}. The question that this paper aims to study is what the appropriate infinitary counterpart of
the provability logic $\GL$ is.

Although infinitary modal logics certainly have received some
attention, they are severely understudied compared to their finitary counterparts.
Certain infinitary modal logics are known to be Kripke
complete \citep{radev1987infinitary,segerberg1994model}.
On the side of the structural proof theory of infinitary
modal logics,  the traditional-style sequent calculi are misaligned with Kripke semantics.
The situation here is that, depending on the particular details of the implementation,
sequent calculi for the countably infinitary counterpart of $K$
are either Kripke complete but lack cut-elimination, or have cut-elimination but are Kripke incomplete
\citep{minari2016some}. This issue is resolved by switching to more complex sequent structures. In particular, there is a complete cut-free calculus with tree-sequents
\citep{tanaka2001cut} and a complete cut-free labeled sequent calculus \citep{tesi2022proof}.

In the present paper we develop systems of deep inference $\dskk$, $\dsk$, and $\dgla$.
The systems $\dskk$ and $\dsk$ are systems of single-sided deep inference for the corresponding
infinitary modal logics; for the finitary case of the systems of one-sided deep inference the standard
references are \citep{kashima1994cut,marin2014label}, and for a broader overview of these and similar systems
we refer to the survey \citep{lellmann2024nested}. In fact our system $\dskk$ is rather close to the tree calculus
for the infinitary modal logic $\sfk$ considered in \cite{tanaka2001cut}: Tanaka's tree sequents are
in fact the same as two-sided nested sequents, the difference lying in the cosmetics of the presentation
rather than in the substantive content. The system $\dgla$, however, differs from those by being a
non-well-founded proof system.  We show that $\dskk$, $\dsk$, and $\dgla$ are Kripke complete.
An interesting difference between the setting of countably infinitary modal languages and the setting of
finitary modal languages that became transparent in the course of our investigation is that it is
completely unclear whether any other natural sort of calculus for infinitary $\GL$ can also attain
Kripke completeness: here by other natural calculi we mean sequent calculi where either proofs are
well-founded or only non-nested sequents are allowed.

Kripke-Platek set theory $\kp$ is a relatively weak subtheory of $\mathsf{ZFC}$ whose relevance
stems from the balance between the theory being strong enough to prove
various results within itself and at the same time weak enough to have many models
(\cite{barwise} is a standard source for the theory).
Initial developments of the system \citep{kripke1964transfinite,platek1966foundations} were largely motivated by
the goal of generalizing computability theory to infinite sets, with admissible sets (transitive models of $\kp$) serving as natural playgrounds, where  $\Sigma_1$-definable functions behave somewhat analogously to computable functions on naturals, $\Sigma_1$-definable sets behave somewhat analogously to c.e. sets of naturals, etc.
Another feature of $\kp$  that makes it important for our development of the
provability interpretation of infinitary modal logic is that it is classically known \citep{barwise1969infinitary} that $\kp$ is able to formalize
the notion of provability in infinitary first-order logic $L_{\infty,\omega}$. In particular, infinitary first-order logic restricted to countable admissible sets $A$ (countable transitive models of $\mathsf{KP}$)  satisfies natural completeness and compactness properties \citep{barwise}.

The study of what theories can prove about provability in themselves is a classical topic of research
in proof theory going back to the famous paper of Kurt G\"odel \citeyearpar{godel1931}, where
he proved the Incompleteness theorems.
Further developments led to a complete characterization of the propositional facts that $\mathsf{PA}$
can prove about its own provability, with L\"ob \citeyearpar{lob1955} isolating L\"ob's Theorem
and Solovay \citeyearpar{solovay1976provability} proving the arithmetical completeness of the logic $\mathsf{GL}$,
thus showing that the consequences of L\"ob's Theorem in the propositional modal language
fully exhaust what $\mathsf{PA}$ can prove about provability in itself.

The analogues of formalized provability-based semantics for $\mathsf{GL}$ have been studied for a number of finitary languages. Probably the most well-known direction is the logics of interpretability and conservativity  \citep{visser1990interpretability}. The systems that are conceptually closer to infinitary languages are the provability logics with quantifiers. Namely, predicate logics of provability \citep{Artemov1985,Vardanyan1986} and second order propositional provability logics \citep{Artemov1993}. 

In the present paper we develop a generalization of the theory of formalized provability to
the case of infinitary logic on admissible sets, following in the footsteps of the generalization of computability theory to admissible sets. Here, in place of the language of first-order Peano Arithmetic $\mathsf{PA}$, we consider for each admissible set $A$ the language $\langl[A]$ that consists of all infinitary first-order formulas $\varphi$ of the set-theoretic signature expanded by constants for the individual elements of $A$, such that $\varphi$ itself is an element of $A$.  As our base theory we take the infinitary theory $\mathsf{KP}[A]$ that extends $\mathsf{KP}$ just enough to be $\Sigma[A]$-complete ($\Sigma[A]$ is the $\langl[A]$-counterpart of the class of arithmetical $\Sigma_1$-formulas). This allows us to mimic the basic development of the theory of formalized provability: the Hilbert-Bernays-L\"ob derivability conditions, the Diagonal Lemma for infinitary formulas, L\"ob's Theorem, and the soundness of the infinitary Hilbert-style modal logic $\hgl[A]$.

Finally, we show that under certain rather restrictive conditions the technique of Solovay \citeyearpar{solovay1976provability} can be adapted to the infinitary case: for admissible sets in the setting with the axiom of constructibility, Kripke models admit a Solovay-style ``embedding'' into the infinitary provability logic. This gives the set of $\dgla$-theorems within $A$ as an upper bound on the provability logics of stable countable admissible sets. However, due to the fact that we do not know whether the $A$-fragment of $\dgla$ has the same theorems as $\hgl[A]$ or is larger, this, unlike the finitary case, does not allow us to completely characterize the provability logics  of those infinitary first-order theories.

\section{Infinitary classical logic}
In the present section we give a presentation of the classical results on the sequent calculus for infinitary
propositional logic. Although this sort of development has been known since the works of Sch\"utte \citeyearpar{Schutte1960} and
Tait \citeyearpar{tait}, we cover these results for the convenience of the reader: the developments of sequent calculi for the systems
of infinitary modal logics that we introduce later in the article are essentially built on top of these classical results.

The $\omega_1$-infinitary language for the propositional logic,  denoted as $\langza$, is defined as follows.
\begin{align*}
A::= \quad  p \mid  \invert p \mid \bigwedge X\mid \bigvee X.
\end{align*}
In the above definition, $p$ is considered to range over atomic formulas $\atom$,
and $X$ ranges over countable sets of $\langza$-formulas.
The set of atomic formulas $\atom$ is considered as an arbitrary set.
A \textit{literal} is either an atomic or its negation.
We define the negation  $\neg A$,  by recursion on $A\in\langza$:
$\neg p:= \invert p$, $\neg\invert p:= p$,  $\neg\bigwedge X:=\bigvee\neg X$
and $\neg\bigvee X:=\bigwedge\neg X$. In this notation we let
$\neg X:=\{\neg A: A\in X\}$.
Define $\bot:=\bigvee\emptyset$
and $\top:=\bigwedge\emptyset$.
Then  other connectives, e.g.~$\to$ and $\lr$
are defined in the usual way.
Then a sequent is a finite multiset of $\langza$-formulas.


Throughout this section, we assume that all formulas 	$A$ and  $B$ range over
$\langza$-formulas.
Also $X$ and $Y$ range over   countable sets of $\langza$-formulas.

An interpretation or model  $I$ for the language $\langza$ is a function
$I\colon \langza\longrightarrow \{0,1\}$, with following properties:
\begin{itemize}
\item $I(\invert p)=1-I(p)$.
\item $I(\bigwedge X)=\min I(X)$. Notice that this means in particular that
$I(\top)=1$.
\item $I(\bigvee X)=\max I(X)$. Notice that this means in particular that
$I(\bot)=0$.
\end{itemize}
Also define $\models A$ iff $I\models A$, for every interpretation $I$.

For a given set (multiset) $X$ of $\langza$-formulas, we define
$\sub X$ as the set of all subformulas of formulas appearing in $X$.
In particular, we assume that $X\subseteq\sub X$.

\subsection{Sequent calculus}
\textit{Sequents} are finite multisets of $\langza$-formulas. We use the capital Greek letters $\Gamma$ and $\Delta$
to range over sequents.
A sequent $\Gamma$ is called
\textit{trivial} if either $\top\in\Gamma$ or  for some atomic $p$ both $p$ and $\invert p$
belong to $\Gamma$.
We use the comma notation for unions of sequents,
namely $\Gamma,\Delta$ means $\Gamma\cup\Delta$. Also for single-element sequent,
we may skip curly braces and simply write $A$ for a single-element sequent $\{A\}$.
Semantially, we treat a sequent as disjunction of its elements and put $I\models \Gamma$ iff there is $A\in \Gamma$ such that $I\models A$.

The Tait-style \citep{tait}
sequent calculus  $\SCLA$ for infinitary classical non-modal logic includes the
following inference rules:
\\[5mm]
\begin{center}
\begin{tabular}{l l}
\multicolumn{2}{c}{
\Ax{}
\LLa{$\neg$}
\UI{$\Gamma,p,\invert p$}
\DP
}
\\[8mm]
\Ax{$\Gamma,\bigwedge X,A$, \;\; for all $A\in X$ }
\LLa{$\wedge$}
\UI{$\Gamma,\bigwedge X$}
\DP
\hspace*{3cm}
&
\Ax{$\Gamma,\bigvee X,A$}
\LLa{$\vee$}
\UI{$\Gamma,\bigvee X$}
\DP
\end{tabular}
\end{center}
In the $\neg$-axiom, $p$ is an atomic formula.  Also in the $\vee$-rule,
$A$ must be a formula in $X$. 
Notice that in the case where $X$ is empty, the $\wedge$-rule  becomes an axiom\footnote{A rule without any premises.}
(remember that $\top:=\bigwedge\emptyset$):
\begin{prooftree}
\Ax{}
\LLa{$\top$}
\UI{$\Gamma,\top$}
\end{prooftree}
It is known in the literature \citep{tait,barwise} that this system is sound and
complete for classical interpretations. Nevertheless,
as a warm-up for the completeness of similar calculi for modal logics,
and also for the sake of self-containedness, we prove the completeness here.

A partial $\SCLA$ proof-tree is a rooted tree\footnote{A tree is a partial order such that
for every node, the set of its ancestors is finite and linearly ordered.
It is rooted if it has a minimum element. In our terminology, trees are not necessarily rooted.} such that all of its nodes are labelled
by sequents, edges are labelled by one of the inference-rule labels  $\vee$ or
$\wedge$, and each node is derived from the set of its children by the
rule indicated in the label of the edges. Then an $\SCLA$ proof-tree is a
well-founded\footnote{No infinite ascending sequence of nodes exists.}
partial $\SCLA$ proof-tree such that all of its leaves are
labelled by trivial sequents (either $\top$ belongs to the leaf, or both $p$ and $\invert p$ belong to it for some atomic $p$).
Notice that $\SCLA$ proof-trees might be infinite, since the $\wedge$-rule might cause
infinitary branching in the tree. Also notice that for any partial proof tree,
if a label $\Delta$ appears above $\Gamma$, then $\Gamma\subseteq\Delta$.
We then define $\SCLA\vdash \Gamma$ for a sequent $\Gamma$, if there is
an $\SCLA$ proof-tree whose root is labelled by $\Gamma$.
\begin{remark}
    It is obvious that the proof system $\SCLA$ has the sub-formula property:
    for any partial $\SCLA$ proof-tree with $\Gamma$ at its root,
    all the formulas appearing above it are  also in $\sub\Gamma$.
\end{remark}

\begin{theorem}\label{SCLA-sound-comp}
$\SCLA$ is sound and complete for classical interpretations,
i.e.~for every sequent $\Gamma_0$, we have
$\SCLA\vdash  \Gamma_0$ iff $\models \Gamma_0$.
\end{theorem}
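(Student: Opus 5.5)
Soundness goes by induction along the well-founded proof-tree; completeness goes by the standard Schütte-style search tree, building a countermodel from any branch that fails to close.

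\emph{Soundness.} Suppose $\SCLA\vdash\Gamma_0$ and fix an interpretation $I$. Since the proof-tree is well-founded, we may use bar induction (equivalently, induction on the well-founded relation ``is a child of''). We show that every node's sequent is true in $I$.
\begin{itemize}
\item Leaves are trivial sequents, so they contain $\top$ or a pair $p,\invert p$, and are true in $I$.
\item For a $\vee$-step, the premise $\Gamma,\bigvee X,A$ with $A\in X$ is true in $I$. If $I\models A$ then $I\models\bigvee X$, so the conclusion $\Gamma,\bigvee X$ is true.
\item For a $\wedge$-step, suppose $I\not\models\Gamma,\bigwedge X$. Then some $A\in X$ has $I(A)=0$. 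The premise $\Gamma,\bigwedge X,A$ is true by the induction hypothesis, which forces $I\models\Gamma,\bigwedge X$, a contradiction.
\end{itemize}

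\emph{Completeness.} Suppose $\SCLA\nvdash\Gamma_0$. We build a canonical partial proof-tree (the search tree) above $\Gamma_0$ using fair scheduling.
\begin{itemize}
\item The set $\sub{\Gamma_0}$ is countable, since each formula has countably many immediate subformulas and formulas are well-founded. Fix an enumeration $(A_n)_{n\in\omega}$ of $\sub{\Gamma_0}$ in which every formula occurs infinitely often.
\item At stage $n$, look at each current leaf $\Delta$ that is non-trivial. If $A_n\in\Delta$ and $A_n=\bigwedge X$, apply the $\wedge$-rule, creating one child $\Delta,A$ for each $A\in X$. If $A_n=\bigvee X\in\Delta$, apply the $\vee$-rule for the $n$-th element of $X$ under a fixed enumeration of $X$ in which each element recurs infinitely often. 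In every other case, extend the leaf by a trivial copy.
\item Trivial leaves are left untouched.
\end{itemize}
If this tree were well-founded, all its leaves would be trivial, and it would be an $\SCLA$ proof of $\Gamma_0$. Since there is no proof, the tree has an infinite branch $\Delta_0\subseteq\Delta_1\subseteq\cdots$ of non-trivial sequents. Let $U=\bigcup_k\Delta_k$.

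Two properties of $U$ follow from fairness.
\begin{itemize}
\item If $\bigwedge X\in U$, then some $A\in X$ lies in $U$.
\item If $\bigvee X\in U$, then every $A\in X$ lies in $U$.
\end{itemize}
Moreover, $\top\notin U$, and $U$ never contains both $p$ and $\invert p$: either of these would already appear in some $\Delta_k$, making it trivial.

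Define $I(p)=0$ if $p\in U$, and $I(p)=1$ otherwise; this extends uniquely to an interpretation. By induction on formula complexity we show that $A\in U$ implies $I(A)=0$.
\begin{itemize}
\item For a literal $p$ this is the definition. For a literal $\invert p\in U$, we have $p\notin U$, so $I(p)=1$ and $I(\invert p)=0$.
\item For $\bigwedge X\in U$, some conjunct lies in $U$ and gets value $0$, so $I(\bigwedge X)=0$. This covers $\top$, which cannot occur in $U$ at all.
\item For $\bigvee X\in U$, every disjunct lies in $U$ and gets value $0$, so $I(\bigvee X)=0$; in particular $I(\bot)=0$.
\end{itemize}
Hence $I\not\models\Gamma_0$, so $\not\models\Gamma_0$.

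The main obstacle is the fairness bookkeeping. A $\bigvee X$ with $X$ countably infinite must have every disjunct eventually added along each branch, and a $\wedge$-formula must be treated at a stage after it has appeared. Scheduling each pair $(\text{formula},\text{index})$ infinitely often through a bijection $\omega\cong\omega\times\omega$ handles this. The remaining step, that non-well-foundedness yields an infinite branch, is immediate because every node has only finitely many ancestors.
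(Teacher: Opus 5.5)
\textbf{Where the proposal breaks.} Your overall route is the paper's. Soundness goes by induction along the well-founded proof-tree. Completeness goes by a Sch\"utte-style search tree driven by an enumeration of $\sub{\Gamma_0}$ in which every formula recurs infinitely often, with the countermodel read off from the union along a non-closing branch.

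The step that fails as written is the $\vee$-clause of your construction. Adding, at stage $n$, ``the $n$-th element of a fixed enumeration of $X$'' is not fair. The stages at which $A_n=\bigvee X$ are dictated by the enumeration of $\sub{\Gamma_0}$, and they may miss every position at which a given disjunct is listed.

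Concretely, take $\Gamma_0=\{\bigvee\{p,\invert p\}\}$ with $A_{3k}=\bigvee\{p,\invert p\}$, $A_{3k+1}=p$, $A_{3k+2}=\invert p$. Enumerate $X=\{p,\invert p\}$ with $p$ exactly at the multiples of $3$ and $\invert p$ elsewhere.
\begin{itemize}
\item Only $p$ is ever added, so the unique branch never closes, although $\Gamma_0$ is provable.
\item On $U=\{\bigvee\{p,\invert p\},p\}$, your valuation gives $I(p)=0$, hence $I(\invert p)=1$, so $I\models\bigvee\{p,\invert p\}$.
\item Thus the claimed $\vee$-property of $U$ fails, and with it the truth lemma.
\end{itemize}

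\textbf{The repair.} The pairing in your final paragraph is the right fix, but it has to replace the bullet, not sit beside it. At stage $n+1=\langle i,j\rangle$, treat $A_i$; if $A_i=\bigvee X$ and $A_j\in X$, add $A_j$. This is exactly the paper's construction. Its requirement $\langle i,j\rangle\ge i$ lets you choose $i$ beyond the point where $\bigvee X$ has entered the branch, with $A_i=\bigvee X$ and $A_j$ the desired disjunct. That stage then necessarily acts on the branch and adds $A_j$.

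\textbf{A smaller point.} Putting a ``trivial copy'' $\Delta$ directly above $\Delta$ is not an instance of any $\SCLA$ rule. So a well-founded search tree built this way is not literally an $\SCLA$ proof-tree. You have two easy fixes:
\begin{itemize}
\item contract these identity steps, which is possible because every chain of them is finite in a well-founded tree; or
\item do as the paper does: add nothing in that case, and note that a leaf of the final tree which is not an axiom already has the same saturation properties as the union along an infinite branch.
\end{itemize}
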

\begin{proof}
The soundness is by a straightforward transfinite induction on the proof-tree. So we only treat the completeness part.
Let $\SCLA\nvdash \Gamma_0$.
We find a model $I$ such that  $I\nmodels \Gamma_0$.

Before we continue with the proof, let us fix some notation.
Let $B_0,B_1,\ldots $ be an enumeration of all  formulas in $\sub {\Gamma_0}$ such that each formula
appears infinitely often. Notice that such an enumeration exists since $\sub {\Gamma_0}$ is countable.
Also let us fix a pairing function $\langle \cdot,\cdot\rangle\colon\mathbb{N}\times\mathbb{N}\to\mathbb{N}$
that is a bijection and satisfies $\langle i,j\rangle \geq  i$.

We first define, by recursion on $n$, a partial $\SCLA$-proof tree $T_n$ as follows.
$T_0$ is a single-node tree whose root is labelled by $\Gamma_0$.
Then assuming that $T_n$ is already defined and
$n+1=\langle i,j\rangle$, for every non-trivial sequent $\Gamma$
appearing as a label of a leaf $w$ in $T_n$ such that  $B_i\in\Gamma$,
we do the following:
\begin{itemize}
\item  If $B_{i}=\bigvee X$ and $B_j\in X$: then add $\Gamma,B_j$ above the
node $w$ in $T_n$ and label its edge by $\vee$.
\item If $B_i=\bigwedge X$:
then for any $B\in X$, add $\Gamma,B$
above $w$ labeling the edges by $\wedge$. Notice in particular that if $X=\emptyset$,
we do not add any node above $w$.
\item In other cases, do not add any node above $w$ in $T_n$.
\end{itemize}
Then, let $T^*$ be the union of all partial proofs $T_n$.
Since $\SCLA\nvdash A$,
$T^*$ cannot be an $\SCLA$ proof-tree. Therefore, either there exists an infinite
branch in the proof-tree labelled by $\Gamma_0\subseteq\Gamma_1\subseteq\ldots$, or there is a leaf sequent $\Gamma^*$ that is not an axiom.
In the first case we put $\Gamma^*:=\bigcup_{n=0}^\infty\Gamma_n$. In both cases, it is straightforward to verify that
the following properties hold for $\Gamma^*$:
\begin{itemize}
 \item $\Gamma_0\subseteq\Gamma^*$.
\item For any atomic $p$, either $p\nin\Gamma^*$ or $\invert p\nin\Gamma^*$.
\item If $\bigvee X\in\Gamma^*$, then $X\subseteq\Gamma^*$.
\item If $\bigwedge X\in\Gamma^*$, then $X\cap\Gamma^*$ is non-empty.
\end{itemize}
Then define $I\models p$ iff $\invert p\in \Gamma^*$. By induction on the construction
of $B$, using the properties listed above for $\Gamma^*$, one may easily show that
for every $B\in\Gamma^*$ we have $I\nmodels B$. Therefore $I\nmodels \Gamma_0$, as desired.
\end{proof}

As a corollary of the soundness and completeness theorems, we have the admissibility of cut for $\SCLA$.
The cut rule in our setting is the following inference rule:
\begin{prooftree}
    \Ax{$\Gamma,A$}
    \Ax{$\Gamma,\neg A$}
    \LLa{cut}
    \BI{$\Gamma$}
\end{prooftree}
\begin{corollary}\label{cut0}
    Cut is admissible in $\SCLA$.
\end{corollary}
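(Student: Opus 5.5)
The plan is to derive admissibility of cut semantically from Theorem~\ref{SCLA-sound-comp}, with no syntactic cut-elimination procedure. Suppose $\SCLA\vdash\Gamma,A$ and $\SCLA\vdash\Gamma,\neg A$. We must show $\SCLA\vdash\Gamma$.

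First we use soundness. It gives $\models\Gamma,A$ and $\models\Gamma,\neg A$, so for every interpretation $I$ we have $I\models\Gamma,A$ and $I\models\Gamma,\neg A$.

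Next we record the semantic fact $I(\neg A)=1-I(A)$ for every interpretation $I$ and every $A\in\langza$. This is checked by induction on the construction of $A$, following the recursive definition of $\neg$:
\begin{itemize}
\item For literals it is the clause $I(\invert p)=1-I(p)$, together with $\neg\invert p=p$.
\item For $\bigwedge X$ we compute $I(\neg\bigwedge X)=I(\bigvee\neg X)=\max\{1-I(B):B\in X\}=1-\min I(X)$, using the induction hypothesis for each $B\in X$.
\item The case of $\bigvee X$ is dual.
\end{itemize}
The induction is legitimate because formulas are well-founded objects, built by transfinite recursion, even though they may be infinitary.

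Now fix an arbitrary interpretation $I$. Either $I(A)=0$ or $I(\neg A)=0$. In the first case $I\models\Gamma,A$ forces some member of $\Gamma$ to be true in $I$. In the second case $I\models\Gamma,\neg A$ forces the same. Hence $I\models\Gamma$ for every $I$, that is, $\models\Gamma$, and completeness in Theorem~\ref{SCLA-sound-comp} yields $\SCLA\vdash\Gamma$.

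No real obstacle is expected. The only point needing care is the negation lemma $I(\neg A)=1-I(A)$, since the paper defines $\neg$ only syntactically. It holds by a routine induction on formulas, including the degenerate cases $X=\emptyset$, where it reduces to $I(\top)=1$ and $I(\bot)=0$.
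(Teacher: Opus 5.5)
Your proof is correct and is essentially the paper's argument. The paper runs the same soundness/completeness reasoning contrapositively: from $\SCLA\nvdash\Gamma$ it takes a countermodel $I$ and notes that $I$ refutes one of $\Gamma,A$ or $\Gamma,\neg A$, whereas you argue directly, and your explicit check that $I(\neg A)=1-I(A)$ supplies a step the paper uses without comment.
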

\begin{proof}
    We reason contrapositively. Let $\SCLA\nvdash \Gamma$.
    Then by  completeness part of the \Cref{SCLA-sound-comp}, there is a model $I\nmodels \Gamma$. In this model, either $I\nmodels A$ or $I\nmodels\neg A$. Thus either $I\nmodels (\Gamma,A)$ or $I\nmodels(\Gamma,\neg A)$.
    Now the soundness part of the  \Cref{SCLA-sound-comp} implies either    $\SCLA\nvdash (\Gamma,A)$
    or $\SCLA\nvdash(\Gamma,\neg A)$, as desired.
\end{proof}
\subsection{Hilbert-style proof system $\HCLA$}\label{sec-HCLA}

We also can naturally define the Hilbert-style calculus for infinitary propositional logic.
Here we switch to the language that on top of infinitary conjunctions and disjunctions, also
allows to use negations and implications. This language is given by the inductive definition:
\[
A::= \quad  p \mid  \bigwedge X\mid \bigvee X \mid A\to B \mid \lnot A,
\]
where $p$ ranges over propositional variables, $A$ and $B$ range over formulas of the language, and $X$ over at most countable sets of formulas of the language.

The language of the Tait calculus from the previous subsection may be regarded as a fragment of this language by identifying the literal $\invert p$ with $\lnot p$. Conversely, every formula of the present language can be transformed into a formula of the Tait language by putting it into negation normal form, using the usual equivalences
\begin{gather*}
A\to B \equiv \lnot A\vee B,\\
\lnot\bigwedge X \equiv \bigvee\{\lnot A:A\in X\},\qquad
\lnot\bigvee X \equiv \bigwedge\{\lnot A:A\in X\}.
\end{gather*}
The semantics from the previous subsection extends to the present language in the standard way:
\[
I(\lnot A)=1-I(A),\qquad
I(A\to B)=\max(1-I(A),I(B)),
\]
together with the already given clauses for infinitary conjunctions and disjunctions.

The calculus is given by the following axioms and inference rules:
\begin{enumerate}
\item \label{Hilbert-start}$A\to(B\to A)$
\item $(A\to  (A\to B))\to (A\to B)$
\item $(A\to B)\to ((B\to C) \to (A\to C))$
\item $(A\to B)\to (\lnot B\to \lnot A)$
\item $A\to \lnot\lnot A$
\item $\lnot\lnot A \to A$
\item \label{Hilbert-end} $\displaystyle \frac{A\;\; A\to B}{B}$
\item \label{H_inf_ax_conj} $\bigwedge X\to A$, for each $A\in X$
\item \label{H_inf_ax_disj} $A\to\bigvee X$, for each $A\in X$
\item \label{H_inf_rule_conj} $\displaystyle \frac{A\to B\;\; \text{for each }B\in X}{A\to\bigwedge X}$
\item \label{H_inf_rule_disj} $\displaystyle \frac{A\to B\;\; \text{for each }A\in X}{\bigvee X\to B}$
\end{enumerate}
That is, we take the part of the Hilbert-Bernays calculus \citep[Section~III.3]{hubert1934grundlagen} for classical finitary propositional logic corresponding to the $\to$ and $\lnot$ connectives (\ref{Hilbert-start}.--\ref{Hilbert-end}.) and we add on top the natural rules and connectives corresponding to the infinitary conjunction and disjunction connectives. Proofs in $\HCLA$ are well-founded proof trees; because of the two infinitary rules they may be infinitely branching.

The soundness of $\HCLA$ can be verified in a straightforward manner: the finitary axioms are classically valid, the axioms for $\bigwedge$ and $\bigvee$ express the elimination of $\bigwedge$ and the introduction of $\bigvee$, and the infinitary rules express the introduction of $\bigwedge$ and the elimination of $\bigvee$, capturing the universal properties of countable conjunctions and disjunctions.

The calculus is also complete. Indeed, every proof in the Tait calculus $\SCLA$ can be transformed into a proof in $\HCLA$: a sequent is read as the disjunction of its formulas, initial sequents become classical tautologies, the $\vee$-rule is simulated by the axioms $A\to\bigvee X$, and the $\wedge$-rule is simulated by the infinitary rule introducing $\bigwedge X$. Hence, by the completeness of $\SCLA$, every valid formula in negation normal form is provable in $\HCLA$. Since the equivalences between formulas and their negation normal forms are derivable in $\HCLA$, the same holds for arbitrary formulas of the present language. Thus, for every formula $A$,
\[
\HCLA\vdash A
\quad\text{iff}\quad
\models A .
\]

\section{The infinitary modal logic}
In this section, we study the infinitary variants of the modal logics
$\sfk$, $\kfour$ and $\GL$. We find a deep-inference sequent calculus
for each of these systems, sound and complete for the
corresponding Kripke frames.
We also provide complete Hilbert-style axiomatizations for infinitary $\sfk$
and $\kfour$.
Nevertheless, a complete Hilbert-style axiomatization for infinitary $\GL$
remains open.

\subsection{Hilbert-style proof systems}\label{sec:hgl-omega}
We start by explaining how to set up the Hilbert-style calculi for the systems of infinitary modal logic. Since these systems are not the main focus of the present paper, we just give the general outline of the constructions.

The language of the Hilbert-style calculus  $\HKA$ extends the language of the propositional Hilbert-style calculus $\HCLA$ by an extra unary connective $\Box$.
It has the following axioms and rule:
\begin{enumerate}
\item All axiom schemes and rules of $\HCLA$.
\item Necessitation: \Ax{$A$}\UI{$\Box A$}\DP.
\item \label{GL_ax_first} Barcan's formula: $\bigwedge\Box X\to\Box\bigwedge X$.
\item $\Box (A\to B)\to(\Box A\to \Box B)$.
\end{enumerate}

Then $\HKFA$ is $\HKA$ plus the transitivity axiom:

\begin{enumerate}[resume]
\item $\Box A\to \Box \Box A$.
\end{enumerate}

Also $\HGLA$ is $\HKFA$ plus the L\"ob's axiom:

\begin{enumerate}[resume]
\item \label{GL_ax_last} $\Box(\Box A\to A)\to \Box A$.
\end{enumerate}

\begin{example}
Let $X:=\{\Box(\Box p_{n+1}\to p_n): n\in\omega\}$ and
$A:=\bigwedge X\to\Box p_0$. Then $A$ is valid in conversely well-founded
Kripke models. Notice that $A$ is derivable in $\HGLA$.
\end{example}

The usual Kripke semantics can be adapted to the case of infinitary modal languages in a straightforward way (we present a complete definition for the case of the language of one-sided sequent calculi in Section \ref{sec-gla-kripke}).

It is possible to obtain the following theorem:
\begin{theorem}\label{GLA-soundess-Kripke}
$\HKA$ is sound and complete for all Kripke models.
Also $\HKFA$ is sound and complete for transitive Kripke models.
Furthermore, $\HGLA$ is sound for transitive conversely well-founded Kripke models.
\end{theorem}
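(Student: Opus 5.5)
Soundness in all three cases goes by transfinite induction on the (well-founded, possibly infinitely branching) Hilbert derivation, showing that every derivable formula is true at every world of every model in the relevant class. The $\HCLA$ axioms and rules are valid pointwise, as already observed in Section~\ref{sec-HCLA}. Necessitation preserves validity in a model because validity means truth at all worlds. The $\sfk$-distribution axiom holds by the usual argument. Barcan's formula $\bigwedge\Box X\to\Box\bigwedge X$ holds because if every $A\in X$ is true at every successor of $w$, then $\bigwedge X$ is true at every successor. Transitivity validates $\Box A\to\Box\Box A$ as in the finitary case.

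For L\"ob's axiom on transitive conversely well-founded frames, suppose $w\models\Box(\Box A\to A)$ but $w\not\models\Box A$. By converse well-foundedness, choose a successor $v$ of $w$ with $v\not\models A$ that is maximal among such successors, so $A$ holds at every successor $u$ of $v$; each such $u$ is a successor of $w$ by transitivity. Then $v\models\Box A$, and the hypothesis gives $v\models\Box A\to A$, hence $v\models A$, a contradiction. Nothing here differs from the finitary case, since the infinitary connectives are only ever evaluated pointwise.

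Completeness for $\HKA$ and $\HKFA$ uses a canonical-model construction, adapted because the logic is not compact. Plain maximal consistent sets do not work: an infinitary conjunction may fail although every conjunct is ``consistent'' with it. I would work with countable fragments. Given a non-theorem $A_0$, fix a countable set $F$ of formulas containing $A_0$ that is closed under subformulas, negation and $\Box$-related subformulas. The worlds are the subsets $\Gamma\subseteq F$ that are \emph{$F$-saturated}:
\begin{itemize}
\item $\Gamma$ is consistent in the sense that no finite conjunction of members refutes;
\item for each formula of $F$, either it or its negation lies in $\Gamma$;
\item $\bigwedge X\in\Gamma$ implies $X\subseteq\Gamma$;
\item $\bigvee X\in\Gamma$ implies $X\cap\Gamma\neq\emptyset$;
\item if $\neg\Box B\in\Gamma$, then there is a successor witnessing $\neg B$.
\end{itemize}
These sets are built by a Rasiowa--Sikorski/Henkin-style enumeration: $F$ is countable, so there are only countably many ``infinitary requirements'' to meet. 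At each stage, the infinitary rules \ref{H_inf_rule_conj} and \ref{H_inf_rule_disj} guarantee that if $\Gamma\cup\{\bigvee X\}$ is consistent, then some $\Gamma\cup\{B\}$ with $B\in X$ is consistent. Otherwise $\neg B$ would follow from the finite part for every $B\in X$, and the conjunction rule gives $\neg\bigvee X$.

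The relation is $\Gamma R\Delta$ iff $\{B:\Box B\in\Gamma\}\subseteq\Delta$. For $\kfour$ I would use the variant $\{B,\Box B:\Box B\in\Gamma\}\subseteq\Delta$, which yields transitivity via axiom $\Box A\to\Box\Box A$. The truth lemma is proved by induction on formulas in $F$.

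The main obstacle is the existence lemma: if $\neg\Box B\in\Gamma$, one must build a saturated $\Delta$ containing $\{C:\Box C\in\Gamma\}\cup\{\neg B\}$. The set $\{C:\Box C\in\Gamma\}$ is infinite, and its consistency is not a finitary matter. Here Barcan's formula is essential. If $\{C:\Box C\in\Gamma\}\cup\{\neg B\}$ were ``inconsistent'' in the infinitary sense, i.e.\ $\HKA\vdash\bigwedge Y\to B$ for the countable set $Y=\{C:\Box C\in\Gamma\}$, then necessitation, distribution and Barcan give $\bigwedge\Box Y\to\Box B$. But $\bigwedge\Box Y$ is accepted by $\Gamma$ (it is a conjunction of members, and $F$ should be chosen closed under these countable conjunctions), so this contradicts $\neg\Box B\in\Gamma$.

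Consequently, consistency must be taken in the infinitary sense: $\Gamma$ is consistent iff no countable conjunction of its members proves $\bot$. The saturation construction then has to preserve this strong consistency, using the infinitary $\bigvee$-elimination rule at each step. I expect the delicate point to be choosing $F$ (closed under the needed countable conjunctions while remaining countable, e.g.\ by restricting to conjunctions of sets definable from the finitely many previously chosen requirements) so that the construction runs in $\omega$ steps. If that fails, I would fall back to obtaining completeness from the deep-inference calculi $\dskk$ and $\dsk$ developed later, translating their proofs into $\HKA$ and $\HKFA$.
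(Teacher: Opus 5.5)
Your soundness half is fine and is what the paper means by a straightforward induction on proofs; the L\"ob case via a $\sqsubset$-maximal falsifying successor is correct. The completeness half has a genuine gap. It sits where you already hesitate, but it is not a matter of choosing $F$ well.

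You rightly conclude that worlds must be infinitarily consistent for the existence lemma. Your $\omega$-step saturation, however, only keeps each stage $\Delta_n$ consistent. Since the logic is not compact, $\bigcup_n\Delta_n$ need not be consistent. You also cannot recover this afterwards as in the non-modal case (truth lemma plus soundness), because the truth lemma at the limit set already requires the existence lemma there. Concretely, start from $\{\neg\Box\bigwedge_n p_n\}$. Nothing in your construction prevents it from adding $\Box p_0,\Box p_1,\ldots$ one at a time, since every finite stage is consistent. The limit then contains $\{\Box p_n:n\in\omega\}\cup\{\neg\Box\bigwedge_n p_n\}$, which Barcan's formula refutes, and at which no witnessing successor exists. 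Closing $F$ under the conjunctions $\bigwedge\Box Y$ cannot rescue this. The set $Y=\{C:\Box C\in\Gamma\}$ depends on a world built after $F$ is fixed, and there are continuum many candidate sets $Y$, so no countable $F$ contains all of them.

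The paper takes precisely your fallback. The completeness proof for $\dskk$ and $\dsk$ (Theorem~\ref{dgla-completeness}) builds the whole tree of worlds at once as a single limit nested sequent whose cosaturation conditions are local. So the consistency of an individual world never has to survive a limit. One then shows, by transfinite induction on the well-founded proof, that the formula translation (reading $[\Delta]$ as $\Box\bigvee\Delta$) of every provable nested sequent is an $\HKA$-, resp.\ $\HKFA$-theorem. Barcan's formula is exactly what simulates the infinitely branching $\wedge$-rule inside brackets, and $\Box A\to\Box\Box A$ simulates the transitive $\lozenge$-rule.

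If you want to keep a canonical model instead, replace infinitary consistency by the Rasiowa--Sikorski lemma:
\begin{enumerate}
\item Let $F$ be countable and closed under subformulas, $\neg$, $\to$, $\Box$, and under the formulas $\bigwedge_{x\in X}\Box(C\to x)$ for $\bigwedge X,C\in F$ and $\bigwedge_{x\in X}\Box(x\to C)$ for $\bigvee X,C\in F$.
\item Take as worlds the ultrafilters of the countable algebra $\{[A]:A\in F\}$ of provable-equivalence classes that preserve the meets and joins named in $F$.
\item Barcan's formula makes $\bigwedge_{x\in X}\Box(C\to x)$ equivalent to $\Box(C\to\bigwedge X)$, and similarly for joins. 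Hence the named meets and joins survive in the quotient by the filter $\{b:\Box b\in U\}$.
\item Rasiowa--Sikorski applied to that quotient then yields the required successor.
\end{enumerate}
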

The soundness results are obtained by straightforward induction on the complexity of
the corresponding proofs.  For the completeness of $\HKA$ and $\HKFA$ one option is to reduce this
theorem to completeness of  nested sequent calculi $\dskk$ and $\dsk$, respectively that we prove latter \Cref{dgla-completeness}:
it is easy to simulate nested sequent calculus in a corresponding Hilbert-style system. Note that for $\HGLA$ this
sort of reduction is not avaliable (or at least we do not know if it is possible), since Kripke complete $\dgla$ is a non-well-founded proof system unlike $\dskk$ and $\dsk$. In fact Kripke completeness for the logic $\HKA$ was proved already in \cite{radev1987infinitary} and his technique, clearly can be adopted for the case of $\HKFA$. Finally, one can appropriately adjust the proof of Kripke-completeness via canonical models, as was already done in \cite{segerberg1994model} for certaine infinitary modal systems.

The completeness of $\HGLA$ with respect to the transitive conversely well-founded Kripke
models is unknown to us.
Nevertheless, we will provide a complete non-well-founded deep inference sequent
calculus which is sound and complete for transitive conversely
well-founded Kripke models (see \Cref{dgla-completeness}).

\begin{question}
Is the Hilbert-style system $\HGLA$ complete
for conversely well-founded transitive Kripke models?
\end{question}


\subsection{Nested sequents for modal logics}

The infinitary language $\langba$ for the one-sided sequent calculi for modal logics is defined as $\langza$ with two additional unary connectives
$\Box $ and $\lozenge$. We have the same notions of literals and negation for $\langba$ as we had for $\langza$, with
the addition that $\neg\Box A:=\lozenge\neg A$ and $\neg\lozenge A:=\Box\neg A$. Also $\top $ and $\bot$ are defined in the same way.
Throughout this section, we let $A$, $B$ and $C$ range over $\langba$-formulas. We also use $X$ and $Y$ to range over
countable sets of $\langba$-formulas.
Let us start with the Kripke semantics for this language.

A \textit{general nested sequent} is a multiset of \textit{general nested formulas}.
A general nested formula is either a formula in $\langba$ or of the form
$[\Gamma]$, with $\Gamma$ being a non-empty nested sequent. For the rest of this section,
we let $A$, $B$ and $C$ range over general nested formulas
and $\Gamma$, $\Delta$ and $\Theta$ range over general nested sequents.
A \textit{nested sequent} is a finite multiset of \textit{nested formulas},
and a nested formula is either a $\langba$-formula or of the form
$[\Gamma]$, with $\Gamma$ being a non-empty nested sequent.
We use comma notation as multiset union for sequents, i.e.~$\Gamma,\Delta$ means
$\Gamma\cup\Delta$. Also, for a singleton sequent we may skip curly braces and simply
write $A$ instead of the sequent $\{A\}$.

The  notation $\gsub{p}$ means that the following hold:
\begin{itemize}
\item $\Gamma$ is a nested sequent.
\item $p$ is an atomic formula that occurs in $\Gamma$ exactly once.
\item Either $\Gamma=p$, or $[p]$ occurs in $\Gamma$. In the first case,
$\gsub{p}$ is called a \textit{root occurrence};
otherwise it is an \textit{internal occurrence}.
\end{itemize}
Then $\gsub{\Delta}$ means the result of replacing $p$ by $\Delta$ in $\Gamma$.
Intuitively, $\gsub{\Delta}$ indicates a designated occurrence of
$\Delta$ in $\Gamma$.
Notice that this means that $p$ can occur
as $\Gamma$ itself, or immediately inside some bracket $[\ ]$.

One may also associate to general nested sequents their
syntax-trees\footnote{A tree is a partial order such that
for every node, the set of its ancestors is finite and linearly ordered.
It is rooted if it has a minimum element. In our terminology, trees are not necessarily rooted.}.
Given a general nested sequent $\Gamma$ and a general nested formula $A$,
we define the corresponding syntax-trees $T_\Gamma$ and $T_A$ recursively as follows:
\begin{itemize}
    \item $T_\Gamma$ is the disjoint union of all $T_A$ for $A\in\Gamma$ together with a fresh root beneath
    them labelled by $\root$. In particular this means that
    the multiplicities of the $T_A$'s are taken into account.
    \item $T_A$ for $A\in\langba$ is a single-node tree, labelled by $A$.
    \item $T_{[\Gamma]}$ is the disjoint union of all $T_A$ for $A\in\Gamma$ together with
        a fresh root beneath them labelled by $[\,]$. In particular this means
        that the multiplicities of the $T_A$'s are taken into account.
\end{itemize}
Notice that the corresponding syntax-tree of each general nested sequent
is a rooted conversely well-founded tree,
with all its leaves labelled by $\langba$-formulas, its root labelled by $\root$, and
other nodes labelled by $[\,]$.  We have the same observation for syntax-trees of general nested
formulas, with the following variation: all the nodes which are not leaves are labelled by $[\,]$.
An  occurrence $\gsub\Delta$ is then uniquely determined by a unique node
(a node which is not a leaf, and hence is labelled either by $\root$ or $[\,]$) in the syntax-tree.
The rooted tree generated by this node, after changing the label of its root to $\root$,
then corresponds to the general  nested sequent $\Delta$.

For simplicity of notation, we may annotate occurrences in general nested sequents
via lower case Greek letters    $\theta$ and $\delta$.
To emphasize that an occurrence $\delta$ is an occurrence in $\Gamma$,
we use the notation $\occurrence{\delta}{\Gamma}{}$.
Furthermore, if we want to indicate that $\delta$ is an occurrence of $\Delta$
in $\Gamma$, we use the notation $\occurrence\delta\Gamma\Delta$.
An occurrence $\occurrence\delta\Gamma{}$ is a
\textit{sub-occurrence} of another
occurrence $\occurrence\theta\Gamma{}$ iff the node corresponding to
$\occurrence\delta\Gamma{}$
in the syntax-tree of $\Gamma$
is a descendant of the one for $\occurrence\theta\Gamma{}$.
 A \textit{strict sub-occurrence} is the
irreflexive sub-occurrence relation: namely, $\occurrence\delta\Gamma{}$
is a strict sub-occurrence of $\occurrence\theta\Gamma{}$
iff $\occurrence\delta\Gamma{}\neq\occurrence\theta\Gamma{}$ and
$\occurrence\delta\Gamma{}$  is a sub-occurrence of $\occurrence\theta\Gamma{}$.
Finally, an \textit{immediate sub-occurrence} of an occurrence $\delta$
is an occurrence which, in terms of the syntax-tree, is a child of $\delta$.

By the definition of general nested sequents, there is no infinite
descending sequence of sub-occurrences of general nested sequents.
In other words, there is no infinite
nesting of brackets, just as we do not have infinite nesting of any
connective in the infinitary language. This observation is of particular importance when
we build a canonical model out of a general nested sequent, with its accessibility relation
corresponding to sub-occurrences of general nested sequents.
This observation allows us to conclude the converse well-foundedness of the Kripke model.

\begin{definition}
A labelled rooted tree\footnote{A tree is a partially-ordered set $(T,<)$ such that for every
$w\in T$, the set $\{u\in W: u\leq w\}$ is a finite linear ordering. A labelled tree is a tree with all of its nodes labelled.} is called a
\textit{non-well-founded nested tree} if all leaves are labelled by $\langba$-formulas, its root is
labelled by $\root$, and the other nodes are labelled by $[\,]$. A non-well-founded nested tree is called a
general nested tree if it is conversely well-founded, i.e.~no infinite  branch exists.
Finally, a finite  non-well-founded nested tree is simply called a nested tree.
\end{definition}

Observe that  up to isomorphism\footnote{An isomorphism between two labelled trees is a bijection
that preserves the accessibility relation and labels.},
(general) nested trees are syntax-trees of (general) nested sequents.
This justifies defining non-well-founded nested sequents as equivalence classes
(with respect to isomorphism of labelled trees) of
non-well-founded nested trees. Whenever it is clear from the context, we treat a non-well-founded nested
tree as a non-well-founded nested sequent.

In a similar way, we also define \textit{non-well-founded
nested formulas} as corresponding to non-well-founded nested trees, with the modification that the root
must also be labelled by $[\,]$ unless it is a leaf, in which case it must be labelled by a $\langba$-formula.

The notion of an occurrence in a non-well-founded nested sequent can be defined in a similar way as for nested sequents.
Namely, an occurrence in a non-well-founded nested sequent is a node in the corresponding
non-well-founded nested tree which is not a leaf.
This occurrence is an occurrence of the tree generated by the very same node\footnote{Notice that the label of the root
node of this generated tree might not be $\root$; in such a case we change its label to $\root$.}.

\subsection{Inference rules}
We have the following rules of inference, each corresponding to one of the connectives:
\\[5mm]
\begin{center}
\begin{tabular}{l l}
\multicolumn{2}{c}{
\Ax{}
\LLa{$\neg$}
\UI{$\gsub{\Delta,p,\neg p}$}
\DP}
\\[8mm]
\Ax{$\gsub{\Delta,\bigwedge X,A}$, \;\;   for all $A\in X$}
\LLa{$\wedge$}
\UI{$\gsub{\Delta,\bigwedge X}$}
\DP
\hspace*{3cm}
&
\Ax{$\gsub{\Delta,\bigvee X,A}$}
\LLa{$\vee$}
\UI{$\gsub{\Delta,\bigvee X}$}
\DP
\\[8mm]
\Ax{$\gsub{\Delta,[\Theta,A,\lozenge A],\lozenge A}$}
\LLa{$\lozenge$}
\UI{$\gsub{\Delta,[\Theta],\lozenge A}$}
\DP
&
\Ax{$\gsub{\Delta,[A],\Box A}$}
\LLa{$\Box$}
\UI{$\gsub{\Delta,\Box A}$}
\DP
\end{tabular}
\end{center}
\vspace{5mm}

In the $\vee$-rule, $A$ must be a formula in $X$.
Also in the $\neg$-rule, $p$ is atomic.
Notice that the specific case of the $\wedge$-rule when $X=\emptyset$
is actually the following rule without any premise:
\begin{prooftree}
\Ax{}
\LLa{$\top$}
\UI{$\gsub{\Delta,\top}$}
\end{prooftree}

In each of the above inference rules, we also define the \textit{active occurrence} of the inference rule in the following way:
\begin{itemize}
    \item In the $\neg$, $\wedge$, $\vee$ and $\Box$-rules, the active occurrence of the inference rule is the
    designated occurrence in the conclusion.
    \item In the case of the $\lozenge$-rule, the active occurrence of the inference rule is the occurrence of the
     nested sequent $\Theta$ in the conclusion.
\end{itemize}

The well-founded proofs using the above rules form a proof system for infinitary
$\kfour$. Furthermore, the non-well-founded version of the same rules
with a specific trace condition on infinite branches (see \cref{dgla} below) is the complete sequent
calculus for infinitary $\GL$. Finally, if we replace the $\lozenge$-rule with
its non-transitive version (see \cref{dskk} below), the well-founded proofs
form a complete sequent calculus for infinitary $\sfk$. More detailed definitions and the corresponding soundness and completeness theorems follow.
\subsection{The sequent calculus $\dsk$}
A \textit{partial proof-tree} is a rooted
tree
with the following properties:
\begin{itemize}
\item all nodes are labelled by sequents,
\item each node is derived from its parent(s) by one of the
above-mentioned inference rules.
\end{itemize}
Notice that the presence of the $\wedge$-rule
makes the tree potentially infinitely branching.
A  partial proof-tree is called \textit{well-founded}
	if no infinite ascending sequence exists.
A $\dsk$ \textit{proof-tree} is a well-founded partial  proof-tree
 in which all of its leaves  (nodes without any ancestors) are \textit{trivial}, i.e.~instances of either the $\neg$-axiom or the $\top$-axiom.
A nested sequent $\Gamma$ is derivable in $\dsk$ if there is a  $\dsk$ proof-tree
with $\Gamma$ at its root.
We then write $\dsk\vdash  A$ if $ A$ is derivable in $\dsk$.
We show that $\dsk$ is sound and complete for the class of transitive
Kripke models.

\subsection{The sequent calculus $\dskk$}\label{dskk}
We define $\dskk$ partial proof-trees  in the same way as
partial proof-trees, replacing the $\lozenge$-rule with the following inference rule:
\begin{prooftree}
\Ax{$\gsub{\Delta,[\Theta,A],\lozenge A}$}
\LLa{w$\lozenge$}
\UI{$\gsub{\Delta, [\Theta],\lozenge A}$}
\end{prooftree}
The active occurrence in this case is also the designated occurrence of $\Theta$ in the conclusion.
A $\dskk$ proof-tree is a well-founded partial $\dskk$ proof-tree
 in which all of its leaves  (nodes without any ancestors) are   instances of
 either the $\neg$-axiom or the $\top$-axiom.

A nested sequent $\Gamma$ is derivable in $\dskk$ if there is a
$\dskk$ proof-tree with $\Gamma$ at its root. We then write $\dskk\vdash  A$ if $ A$ is derivable in $\dskk$.
We will see that $\dskk$ is sound and complete for the class of all Kripke models.

Given a general nested formula $A$ (resp.~general nested sequent $\Gamma$), the set of all subformulas of $A$ (resp.~$\Gamma$), denoted by
$\sub A$ (resp.~$\sub\Gamma$), is defined in the usual way:
\begin{itemize}
    \item $\sub A:=\{A\}$ for $A$ being a literal.
    \item $\sub {\bigwedge X}:=\{\bigwedge X\}\cup\bigcup\{\sub A: A\in X\}$ and   $\sub {\bigvee X}:=\{\bigvee X\}\cup\bigcup\{\sub A: A\in X\}$.
    \item $\sub{\Box A}:=\{\Box A\}\cup\sub A$ and $\sub{\lozenge A}:=\{\lozenge A\}\cup\sub A$.
    \item $\sub\Gamma:=\bigcup\{\sub A: A\in \Gamma\}$. Notice that here we do not take multiplicity into account.
    \item $\sub{[\Gamma]}:=\{[\Gamma]\}\cup\sub\Gamma$.
\end{itemize}

\begin{remark}
    It is obvious that the inference rules introduced above have the sub-formula property:
    for any partial  ($\dskk$) proof-tree with $\Gamma$ at its root,
    all the $\langba$-formulas appearing above it are  also in $\sub\Gamma$.
\end{remark}

\subsection{Limit of infinite branches in partial proof trees}\label{sec-limit}

Let  $\Gamma_1$ be one of the premises used to derive $\Gamma_0$ via one of the $\dskk$- or $\dsk$-rules.
Then, given a syntax-tree $S_0$ of $\Gamma_0$, one may define the syntax-tree $S_1$ of $\Gamma_1$ in such a way that
$S_0\subseteq S_1$. More precisely, we define the syntax-tree $S_1$ of $\Gamma_1$ by cases on the
inference rule by which $\Gamma_0$ is concluded. In each case we assume that $\delta$ is the active occurrence in the inference rule and the node $w$ in $S_0$ corresponds to this occurrence.
\begin{itemize}
    \item $\wedge$: Let $\Gamma_0=\gsub{\Delta,\bigwedge X}$ and $\Gamma_1=\gsub{\Delta,\bigwedge X,A}$ for some $A\in X$.
    Then $S_1$ is obtained from $S_0$
    by adding a new child to $w$, labelled by $A$.
    \item $\vee$:  Let $\Gamma_0=\gsub{\Delta,\bigvee X}$ and $\Gamma_1=\gsub{\Delta,\bigvee X,A}$ for $A\in X$.
    Then  $S_1$  is obtained from  $S_0$
    by adding a new child to $w$, labelled by $A$.
    \item $\Box$:  Let $\Gamma_0=\gsub{\Delta,\Box A}$ and $\Gamma_1=\gsub{\Delta,[A],\Box A}$.
    Then    $S_1$  is obtained from $S_0$
    by adding a new child $u$ labelled by $[\,]$ to $w$, together with a child of $u$ labelled by $A$.
    \item w$\lozenge$:   Let $\Gamma_0=\gsub{\Delta,[\Theta],\lozenge A}$ and $\Gamma_1=\gsub{\Delta,[\Theta,A],\lozenge A}$.
    Then $S_1$ is obtained from $S_0$
    by adding a new child to $w$, labelled by $A$.
    \item $\lozenge$:   Let $\Gamma_0=\gsub{\Delta,[\Theta],\lozenge A}$ and $\Gamma_1=\gsub{\Delta,[\Theta,A,\lozenge A],\lozenge A}$.
    Then $S_1$ is obtained from $S_0$
    by adding two new children to $w$, labelled by $A$ and $\lozenge A$.
\end{itemize}
Therefore, in a partial proof-tree, we may assume that the syntax-tree $S_\Gamma$ of some general nested sequent
$\Gamma$ that appears above $\Delta$ in the partial proof-tree includes the syntax-tree $S_\Delta$ of $\Delta$.
This simple observation in particular implies that
any occurrence $\theta$ of some nested sequent in $\Delta$ continues to be an occurrence in $\Gamma$ as well.
Then for an infinite  branch
$\Gamma_0,\Gamma_1,\ldots$ in a partial proof-tree, we have
syntax-trees $S_i$ for the $\Gamma_i$ such that each $S_{i}$ is a subset of  $S_{i+1}$.  Therefore, we may define
the union $S^*:=\bigcup_{i\in\omega} S_i$.
Clearly, we have the following properties:
\begin{itemize}
    \item[i.]  $S^* $  is a non-well-founded nested tree.
Let $\Gamma^*$ denote the non-well-founded nested sequent corresponding to $S^*$.
\item[iii.] For any occurrence $\delta$  of some $\Delta^*$ in $\Gamma^*$ and any
$B\in\Delta^*\cap\langba$, there is some $n$ such that $\delta$ is also an occurrence
of some $\Delta_n$ in $\Gamma_n$ with $B\in\Delta_n$. Furthermore, $\delta$
remains an occurrence of some $\Delta_m$ in $\Gamma_m$  with $B\in\Delta_m$, for any $m>n$.
\end{itemize}

This justifies defining $\lim_{n\to\infty} \Gamma_n$ as the non-well-founded nested sequent corresponding to the union
of the corresponding syntax-trees of all the $\Gamma_n$.

\subsection{The sequent calculus $\dgla$}\label{dgla}
Let us first define the notion of nestedness for $\Box$-rules. Two instances of
$\Box$-rules are said to be \textit{nested}
if one appears above the other (not necessarily immediately) and
the occurrence of the active boxed formula in one $\Box$-rule
 is a sub-occurrence of the new bracket generated
by the other $\Box$-rule.
More precisely, let
\Ax{$\seqsub{\Gamma'}{\Theta',[A'],\Box A'}$}
\LLa{D1}
\UI{$\seqsub{\Gamma'}{\Theta',\Box A'}$}
\DP appears above
\Ax{$\gsub{\Theta,[A],\Box A}$}
\LLa{$D0$}
\UI{$\gsub{\Theta,\Box A}$}
\DP appear in some proof-tree.
Then we say that $D1$ is nested in $D0$ if the indicated occurrence of
$\Theta',\Box A'$ in the conclusion of $D1$ is a sub-occurrence of $[A]$ in $D0$.
Notice that the indicated occurrence of $[A]$ in
$D0$ might be expanded in the upper stages of the proof,
in the sense that $[A]$ may be replaced by $[A,\Xi]$.
Nevertheless, the occurrence itself remains.

A partial proof-tree is called a \textit{$\dgla$-proof} if
\begin{itemize}
\item all of its leaves  (nodes without any ancestors) are   instances of either the $\neg$-axiom or the $\top$-axiom,
\item any infinite branch in the proof-tree includes
infinitely many instances of $\Box$-rules, each of them nested in the previous one.
\end{itemize}

A nested sequent $\Gamma$ is derivable in $\dgla$ if there is a  $\dgla$ proof-tree
with $\Gamma$ at its root.
We then write $\Gamma\vdashsub{\dgla} A$ if there is some $\Delta\subseteq\Gamma$
such that   $\neg\Delta,A$ is derivable in $\dgla$.
We show that $\dgla$ is sound and complete for the class of transitive
conversely well-founded Kripke models.

\begin{remark}\label{rem-dgla}
    Given an infinite branch $\Gamma_0,\Gamma_1,\ldots$ in a partial proof tree,  let
    $\Gamma^*:=\lim_{n\to\infty}\Gamma_n$ be as defined in \cref{sec-limit}.
    Then $\Gamma^*$ is not a general nested sequent iff
    the branch includes infinitely many instances of $\Box$-rules, each of them nested in the previous one.
\end{remark}



\subsection{Kripke semantics}\label{sec-gla-kripke}
A Kripke model is a tuple $\kmodel$ in which $\R$ is a binary relation on
the set $W$ of possible worlds, $\SUP$ is a subset of the atomic formulas called the
support of $\kcal$,
and $\EV$ is a relation between possible worlds and atomic formulas in $\SUP$. We say that $\kcal$
supports a $\langba$-formula $A$ (resp.~a general nested sequent $\Gamma$)
if the set of all atomic formulas appearing in $A$ (resp.~$\Gamma$) is included in $\SUP$.
We then extend the relation $\EV$ to all $\langba$-formulas built up from atomic formulas in $\SUP$ as follows:
\begin{itemize}
\item $\kcal,w\models p$ iff $w\EV p$, for atomic $p\in\SUP$.
\item $\kcal,w\models\invert p$ iff $\kcal,w\nmodels p$, for atomic $p\in\SUP$.
\item $\kcal,w\models \bigwedge X$ iff $\kcal,w\models A$ for every $A\in X$.
\item $\kcal,w\models \bigvee X$ iff $\kcal,w\models A$ for some $A\in X$.
\item $\kcal,w\models \Box A$ iff for every $u\sqsupset w$ we have $\kcal,u\models A$.
\item $\kcal,w\models \lozenge A$ iff for some $u\sqsupset w$ we have $\kcal,u\models A$.
\end{itemize}
For a nested sequent $\Gamma$, we then
define:
\begin{itemize}
\item $\kcal,w\models \Gamma$ iff $\kcal,w\models A$ for \textit{some} $A\in\Gamma$.
\item $\kcal,w\models[\Gamma]$ iff for every $u\sqsupset w$ we have $\kcal,u\models \Gamma$.
\end{itemize}
$\kcal$ is called transitive (conversely well-founded) iff $(W,\R)$ is so.
For a Kripke model $\kcal$, the notation $\kcal\models A$ means that for every world $w$ in  $\kcal$ we have $\kcal,w\models A$.

\subsection{Soundness}

In this subsection, we show that
\begin{itemize}
\item $\dskk$ is  sound for arbitrary  Kripke models,
\item $\dsk$ is sound for transitive Kripke models,
\item $\dgla$ is sound for transitive and conversely well-founded Kripke models.
\end{itemize}

Let us first define some notions that are useful both here for the soundness theorems and later in the proof of
the admissibility of cut.
Given a nested sequent $\Gamma$ and an occurrence $\delta$ in it,
we define the \textit{rank} of $\delta$, denoted by $\rankd\delta$, as the height of $\delta$
in the syntax-tree of $\Gamma$.
Also, given a Kripke model $\kmodel$, $u,v\in W$ and $n\in\omega$,
we say that \textit{$v$ is $n$-reachable from $u$}, denoted by $\reachable u n v$, if there exists a sequence
$u=w_0\R w_1\R\ldots\R w_n=v$.  In particular, we assume that $\reachable u 0 v$ iff $u=v$.

\begin{lemma}\label{lem-cut}
    Let  $\delta$ be an occurrence of $\Delta$ in $\Gamma$, with $\Delta$ and $\Gamma$ being general nested sequents.
    Also let $\kmodel$ be a Kripke model, and $w\in W$. Then $\kcal,w\models \Gamma$ iff there is an
    occurrence $\occurrence \delta  \Gamma\Delta$ such that for every  $u\in W$ with
    $\reachable w {\rankd{\occurrence \delta{}{}}} u$ we have
    $\kcal,u\models\Delta$.
\end{lemma}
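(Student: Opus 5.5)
The left-to-right direction is immediate. Take $\delta$ to be the root occurrence of $\Gamma$ in itself. Its rank is $0$, and $\reachable w 0 u$ holds only for $u=w$, so the condition reduces to $\kcal,w\models\Gamma$. The real content is therefore the right-to-left direction: whenever some occurrence $\occurrence\delta\Gamma\Delta$ satisfies $\kcal,u\models\Delta$ for all $u$ with $\reachable w{\rankd\delta}u$, we must show $\kcal,w\models\Gamma$.

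I will prove this by induction on $\rankd\delta$, simultaneously for all occurrences and all worlds $w$. Ranks are natural numbers, because every node of the syntax-tree has only finitely many ancestors, even for general nested sequents. So ordinary induction on $\omega$ suffices, and the infinitary branching of the syntax-tree causes no difficulty.

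In the base case $\rankd\delta=0$, the occurrence $\delta$ is the root. Then $\Delta=\Gamma$ and the only world $u$ with $\reachable w0u$ is $w$ itself, so the hypothesis is exactly $\kcal,w\models\Gamma$.

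For the inductive step, let $\rankd\delta=n+1$. Then the node of $\delta$ has a parent node, which is an occurrence $\theta$ of some $\Theta$ in $\Gamma$ with $\rankd\theta=n$. Since $\delta$ is an immediate sub-occurrence of $\theta$, the general nested formula $[\Delta]$ is an element of $\Theta$.

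The key step is to show that every $v$ with $\reachable wnv$ satisfies $\kcal,v\models[\Delta]$. Take any $u\sqsupset v$. Extending the witnessing chain $w=w_0\R\dots\R w_n=v$ by the step $v\R u$ gives $\reachable w{n+1}u$, so $\kcal,u\models\Delta$ by hypothesis. By the semantic clause for brackets this yields $\kcal,v\models[\Delta]$. Since $[\Delta]\in\Theta$ and satisfaction of a sequent means satisfaction of some member, we get $\kcal,v\models\Theta$.

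This holds for all $v$ that are $n$-reachable from $w$, so the induction hypothesis applied to $\theta$ gives $\kcal,w\models\Gamma$.

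The only point requiring care is bookkeeping. Rank must be read as the depth of the node measured from the root of the syntax-tree of $\Gamma$, so that passing to the parent lowers it by exactly one. Reachability must also be composed correctly: an $n$-step chain followed by one $\R$-step is an $(n+1)$-step chain. No transitivity or well-foundedness of $\R$ is used, so the lemma holds for arbitrary Kripke models.
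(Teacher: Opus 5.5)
Your proof is correct and is essentially the paper's argument: an induction on $\rankd{\delta}$ that passes from $\delta$ to its parent occurrence $\theta$, using $[\Delta]\in\Theta$ and the semantic clause for brackets. The only difference is that the paper proves the right-to-left direction contrapositively: it assumes $\kcal,w\nmodels\Gamma$ and produces, for each occurrence $\delta$, a $\rankd{\delta}$-reachable world refuting $\Delta$, whereas you propagate satisfaction directly, which is the same induction read in the other direction.
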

\begin{proof}
For the left-to-right direction, take $\delta$ to be the root occurrence in $\Gamma$.
For the other direction, let $\kcal,w\nmodels \Gamma$. By induction on
$n=\rankd{\occurrence\delta{}\Delta}$, we show that there is some $u$ with $\reachable w {n} u$
such that  $\kcal,u\nmodels\Delta$.
The case $n=0$ is obvious. So assume that $\rankd\delta>0$. Then $\occurrence \delta\Gamma\Delta$
is an immediate sub-occurrence of
the occurrence $\occurrence\theta \Gamma \Theta $. This means that $[\Delta]\in\Theta$.
By the induction hypothesis, there is some $v$ with $\reachable w{\rankd \theta}v$ such that $\kcal,v\nmodels\Theta$.
Since $[\Delta]\in\Theta$, this implies $\kcal,u\nmodels \Delta$ for some $v\R u$.
Therefore, by definition, we have $\reachable w {\rankd\theta+1} u$.  Since
$\rankd \delta=\rankd \theta+1$, we have the desired result.
\end{proof}

The soundness of a nested sequent calculus $\sft$ for a class $\mathfrak{M}$
of Kripke models is then defined as follows: for every nested sequent $\Gamma$ provable in $\sft$
and any Kripke model $\kcal\in\mathfrak{M}$ supporting $\Gamma$,
we have $\kcal,w\models\Gamma$ for every world $w$ in $\kcal$.
Let us begin with the soundness of $\dskk$ and $\dsk$:

\begin{theorem}\label{dsk-soundness}
$\dskk$  is sound for all Kripke models. Furthermore,
$\dsk$ is sound for transitive Kripke models.
\end{theorem}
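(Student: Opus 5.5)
Since $\dskk$ and $\dsk$ proof-trees are well-founded, the natural approach is transfinite induction on the proof-tree, or equivalently on the height of the node. We show that every sequent labelling a node is true at every world of every model in the relevant class that supports the root sequent; by the sub-formula property, such a model supports every sequent in the tree. It is cleaner to argue contrapositively, one rule at a time: if the conclusion of a rule instance fails at some world $w$, then some premise fails at some world. Well-foundedness then yields a contradiction, because a failing sequent at a leaf is impossible. Indeed, $\gsub{\Delta,p,\neg p}$ and $\gsub{\Delta,\top}$ are valid: by \Cref{lem-cut}, taking the designated occurrence $\delta$, the sequent $\Delta,p,\neg p$ (resp.\ $\Delta,\top$) holds at every world, in particular at every world $\rankd\delta$-reachable from $w$.

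The key tool is \Cref{lem-cut}, used in its contrapositive form. Suppose $\kcal,w\nmodels\gsub{\Delta,C}$, where $C$ is the principal part of the rule. Then for every occurrence $\delta'$ of some $\Delta'$ there is $u$ with $\reachable{w}{\rankd{\delta'}}u$ and $\kcal,u\nmodels\Delta'$. For the premises we need a slightly finer, ``coherent'' version. Since the nested structure is a tree, failure of $\Gamma$ at $w$ lets us choose, by recursion along the syntax tree, a world $u_\theta$ for each occurrence $\theta$, with $u_{\text{root}}=w$, $u_{\theta}\R u_{\theta'}$ for each immediate sub-occurrence $\theta'$ of $\theta$, and every $\langba$-formula directly in $\theta$ false at $u_\theta$. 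This is exactly what the proof of \Cref{lem-cut} produces; we only need to record it as a refinement, which is the one step needing care.

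With such an assignment fixed for the conclusion, we check the rules case by case.
\begin{itemize}
\item $\wedge$: $\bigwedge X$ is false at $u_\delta$, so some $A\in X$ is false there; the premise containing that $A$ fails under the same assignment.
\item $\vee$: every $A\in X$ is false at $u_\delta$, so the unique premise fails.
\item $\Box$: $\Box A$ is false at $u_\delta$, so there is $v\sqsupset u_\delta$ with $\kcal,v\nmodels A$; assigning $v$ to the new bracket $[A]$ extends the assignment to the premise.
\item w$\lozenge$: $\lozenge A$ is false at the world of the parent occurrence, and the world $u_\theta$ of $\Theta$ is an $\R$-successor of it, so $A$ is false at $u_\theta$ and the premise fails.
\item $\lozenge$ in $\dsk$: as for w$\lozenge$, we also need $\lozenge A$ false at $u_\theta$. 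Transitivity gives this: every successor of $u_\theta$ is a successor of the parent world, where $\lozenge A$ is false.
\end{itemize}
In every case the premise fails at $w$ via an assignment. Checking that the assignment certifies failure is just the recursive definition of $\models$ for $[\cdot]$.

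We then conclude by induction. Call a node bad if its sequent fails at some world via some assignment. A bad node has a bad child. Starting from the root, if the root sequent fails at $w$ we obtain an infinite ascending chain of bad nodes, contradicting well-foundedness. We expect the main obstacle to be the bookkeeping in the coherent refinement of \Cref{lem-cut}, namely that worlds for nested occurrences form a single $\R$-chain-consistent assignment; the rule cases are routine once it is in place.
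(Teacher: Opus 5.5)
Your proposal is correct and is essentially the paper's argument: assuming the root sequent fails at some world, you climb the well-founded proof tree along premises that still fail while carrying an assignment of worlds to occurrences, and your rule-by-rule checks match the paper's case analysis. Your coherent assignment is exactly what the w$\lozenge$- and $\lozenge$-cases need (the world of an immediate sub-occurrence is an $\sqsubset$-successor of the world of its parent); the paper states this condition explicitly only as item 4 in the proof of \Cref{dgla-soundness}, whereas here it picks $f_0$ occurrence by occurrence via \Cref{lem-cut}, so your recursion along the syntax tree is the right way to set up the initial assignment.
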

\begin{proof}
We prove the soundness of $\dsk$ and leave the similar argument for $\dskk$ to the reader.
Let $\mathcal{P}$ be a $\dsk $-proof tree for $\Gamma_0$. Also assume that $\kmodel$
is a transitive model supporting $\Gamma_0$ such that $\kcal,w_0\nmodels \Gamma_0$. We show that there is an
infinite branch in the proof-tree, a contradiction.

We inductively define a sequence $\{\Gamma_n\}_{n\in\omega}$ of nested sequents
 and  functions $f_n$ with the following properties:
 \begin{enumerate}
 \item $\Gamma_0,\Gamma_1,\Gamma_2,\ldots, \Gamma_n$ is a branch in the proof-tree.
\item The domain of $f_n$ is the set of all occurrences
in $\Gamma_n$ and its co-domain is $W$.
\item $\kcal,f_n(\delta)\nmodels\Delta$,
for every occurrence $\delta$ of $\Delta$ in $\Gamma_n$.
 \end{enumerate}

Let us now define $\Gamma_n$ and $f_n$ with the above properties.
We only provide the definitions and leave the straightforward arguments for showing
the required properties 1--3 to the reader.
As the induction hypothesis, assume that we have already defined $f_m$ and $\Gamma_m$ for
every $m<n$, satisfying properties 1--3.
We have the following cases:
\begin{itemize}
\item   $n=0$:  
Given an occurrence $\delta$ of $\Delta$ in $\Gamma_0$, by \Cref{lem-cut}, there is some $u\sqsupseteq w_0$ such that
$\kcal,u\nmodels \Delta$. Define $f_n(\delta):=u$ for some such $u$.
\item $n>0$ and $\Gamma_{n-1}$ is derived by the $\neg$-rule:
Let $\Gamma_{n-1}=\gsub{\Delta,B,\neg B}$ with $\delta$ being the designated occurrence
of $\Delta,B,\neg B$ in $\Gamma_{n-1}$. Then by IH3
(i.e.~item 3 of the induction hypothesis) we have
$\kcal,f_{n-1}(\delta)\nmodels \Delta,B,\neg B$. Therefore,
$\kcal,f_{n-1}(\delta)\nmodels  B $ and $\kcal,f_{n-1}(\delta)\nmodels  \neg B $,
a contradiction. This shows that this case is impossible.
\item $n>0$ and $\Gamma_{n-1}$ is derived by the $\vee$-rule:
define $\Gamma_n$ to be the sequent just above $\Gamma_{n-1}$.
Then $\Gamma_n$ and $\Gamma_{n-1}$ share the same set of occurrences.
Therefore, for any occurrence $\delta$ in $\Gamma_{n-1}$, we define
$f_n(\occurrence\delta{\Gamma_n}{}):=f_{n-1}(\delta)$.
\item $n>0$ and $\Gamma_{n-1}$ is derived by the $\wedge$-rule:
let $\Gamma_{n-1}=\gsub{\Theta,\bigwedge\Delta}$, with $\delta$
being the designated occurrence
of $\Theta,\bigwedge\Delta$ in $\Gamma_{n-1}$. By IH3 we have
$\kcal,f_{n-1}(\delta)\nmodels \bigwedge\Delta$. Therefore, there is some $B\in\Delta$
such that $\kcal,f_{n-1}(\delta)\nmodels B$.
Define $\Gamma_n:=\gsub{\Theta,B,\bigwedge\Delta}$.
Then $\Gamma_n$ and $\Gamma_{n-1}$ share the same set of occurrences.
Therefore, for any occurrence $\delta$ in $\Gamma_{n-1}$, we define
$f_n(\occurrence\delta{\Gamma_n}{}):=f_{n-1}(\delta)$.
\item   $n>0$ and $\Gamma_{n-1}$ is derived by the $\lozenge$-rule:
define $\Gamma_n$ to be the sequent just above $\Gamma_{n-1}$.
Then $\Gamma_n$ and $\Gamma_{n-1}$ share the same set of occurrences.
Therefore, for any occurrence $\delta$ in $\Gamma_{n-1}$, we define
$f_n(\occurrence\delta{\Gamma_n}{}):=f_{n-1}(\delta)$.
\item $n>0$ and $\Gamma_{n-1}$ is derived by the $\Box$-rule:
then $\Gamma_{n-1}=\seqsub{\Gamma_{n-1}}{\Theta,\Box B}$
is derived from  $\Gamma_n:=\seqsub{\Gamma_{n-1}}{\Theta,[B],\Box B}$.
Let $\theta$ be the occurrence of $\Theta,\Box B$ in $\Gamma_{n-1}$.
The set of occurrences in
$\Gamma_n$ has exactly one additional occurrence which is not in $\Gamma_{n-1}$
(this is the only case in which a new occurrence is introduced),
namely $\delta$, the occurrence of the newly added bracket $[B]$ in $\Gamma_n$.
By the induction hypothesis (item 3), we have $\kcal,f_{n-1}(\theta)\nmodels \Box B$.
Therefore, there is some $u\in W$ with $f_{n-1}(\theta)\R u$. Define $f_n(\delta):=u$.
For all occurrences $\tau$ in $\Gamma_{n-1}$, define
$f_n(\occurrence\tau{\Gamma_n}{}):= f_{n-1}(\tau)$.
\qedhere
\end{itemize}
\end{proof}
\begin{theorem}\label{dgla-soundness}
$\dgla$ is sound for transitive conversely well-founded Kripke models.
\end{theorem}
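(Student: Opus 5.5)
We argue by contradiction, as in the proof of \Cref{dsk-soundness}. Let $\mathcal{P}$ be a $\dgla$-proof of $\Gamma_0$, and let $\kmodel$ be a transitive, conversely well-founded model supporting $\Gamma_0$ with $\kcal,w_0\nmodels\Gamma_0$. We follow the proof tree upward and build a branch $\Gamma_0,\Gamma_1,\ldots$ together with maps $f_n$ from the occurrences of $\Gamma_n$ into $W$. These are required to satisfy properties 1--3 of that proof and, in addition, a structural invariant:

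\begin{itemize}
\item[(4)] if $\delta$ is an immediate sub-occurrence of $\theta$ in $\Gamma_n$, then $f_n(\theta)\R f_n(\delta)$;
\item[(5)] $f_{n+1}$ extends $f_n$ on the old occurrences.
\end{itemize}

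By transitivity, (4) gives $f_n(\theta)\R f_n(\delta)$ whenever $\delta$ is a strict sub-occurrence of $\theta$.

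\textbf{Base case.} We define $f_0$ top-down along the syntax tree of $\Gamma_0$, instead of using \Cref{lem-cut} only as a black box. Put $f_0(\root)=w_0$. If $f_0(\theta)$ has been chosen with $\kcal,f_0(\theta)\nmodels\Theta$ and $[\Delta]\in\Theta$, then $\kcal,f_0(\theta)\nmodels[\Delta]$. So there is some $u$ with $f_0(\theta)\R u$ and $\kcal,u\nmodels\Delta$, and we set $f_0(\delta)=u$.

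\textbf{Inductive step.} The $\vee$- and $\wedge$-steps are handled exactly as before; for $\wedge$ we choose a refuted conjunct to decide which premise to follow. The $\neg$- and $\top$-axioms cannot occur, since property 3 would be violated at the active occurrence.

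For a $\Box$-rule with active occurrence $\theta$ and new bracket $\delta$, we have $\kcal,f(\theta)\nmodels\Box B$. We pick $u$ with $f(\theta)\R u$ and $\kcal,u\nmodels B$, and set $f(\delta)=u$; this preserves (4).

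For the transitive $\lozenge$-rule the occurrences do not change. Let $\delta$ be the occurrence of $[\Theta]$ inside the occurrence $\theta$ that carries $\lozenge A$. By (4), $f(\theta)\R f(\delta)$. Since $\kcal,f(\theta)\nmodels\lozenge A$, we get $\kcal,f(\delta)\nmodels A$. Moreover, every successor of $f(\delta)$ is a successor of $f(\theta)$ by transitivity, so also $\kcal,f(\delta)\nmodels\lozenge A$. Hence the enlarged $\Theta,A,\lozenge A$ is still refuted at $f(\delta)$, which is property 3. This is the only place where transitivity is used for property 3.

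\textbf{The branch is infinite.} Since no axiom is ever reached, the construction never terminates, so we obtain an infinite branch of $\mathcal{P}$. By the definition of a $\dgla$-proof, this branch contains $\Box$-rule instances $D_0,D_1,\ldots$, each nested in the previous one. Let $\theta_k$ be the active occurrence of $D_k$ and $\delta_k$ the new bracket it creates.

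\textbf{Contradiction.} Nestedness says that $\theta_{k+1}$ is a sub-occurrence of $\delta_k$, and occurrences persist upward by \Cref{sec-limit}. Using (4) with transitivity (or equality when $\theta_{k+1}=\delta_k$), together with (5), we get $f(\delta_k)\sqsubseteq f(\theta_{k+1})$, where $\sqsubseteq$ denotes $\R$ or equality. By the $\Box$-step, $f(\theta_{k+1})\R f(\delta_{k+1})$. Transitivity then yields an infinite chain $f(\delta_0)\R f(\delta_1)\R f(\delta_2)\R\cdots$, which contradicts converse well-foundedness.

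\textbf{Main obstacle.} The delicate point is keeping the maps $f_n$ coherent, namely invariants (4) and (5), along the whole infinite branch. In particular, the argument relies on the fact that the only rule creating new occurrences, the $\Box$-rule, always places the new world strictly above the world of its parent occurrence. Choosing witnesses arbitrarily, as the proof of \Cref{dsk-soundness} permits, would not suffice. The witnesses must respect the tree structure so that syntactic nesting translates into strict ascent in $\R$.
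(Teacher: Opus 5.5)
Your proof is correct and takes the same route as the paper. You build a refuting branch with compatible maps $f_n$ that respect the sub-occurrence order, then read off an infinite $\R$-chain from the worlds assigned to the new brackets of the nested $\Box$-rules. Your top-down choice of $f_0$ and your explicit use of the tree-respecting invariant in the $\lozenge$-step supply details that the paper's item 4 actually needs. The paper covers these only by saying the definitions from \Cref{dsk-soundness} carry over, and that proof picks $f_0$ via \Cref{lem-cut} without regard to the tree structure.
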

\begin{proof}
Let $\mathcal{P}$ be a $\dgla $-proof tree for $\Gamma_0$. Also assume that $\kmodel$
supports $\Gamma_0$, that $\R$ is transitive, and that $\kcal,w_0\nmodels \Gamma_0$.
We show that there is an infinite sequence $ w_0\R w_1\R\ldots$. Therefore, $(W,\R)$ is not conversely well-founded.
Notice that this implies the desired soundness.

We define recursively a sequence $\{\Gamma_n\}_{n\in\omega}$ of nested sequents
 and  functions $f_n$ with the following properties:
 \begin{enumerate}
 \item $\Gamma_0,\Gamma_1,\Gamma_2,\ldots, \Gamma_n$ is a branch in the proof-tree.
\item The domain of $f_n$ is the set of all occurrences
in $\Gamma_n$ and its co-domain is $W$.
\item $\kcal,f_n(\delta)\nmodels\Delta$,
for every occurrence $\delta$ of $\Delta$ in $\Gamma_n$.
\item If $\theta$ is a strict sub-occurrence of $\delta$,
then $f_n(\delta)\R f_n(\theta)$.
\item The family of functions
$\{f_m\}_{m\leq n}$ is
compatible: if
$\delta$ is an occurrence in $\Gamma_m$ and $m<n$, then
$f_m(\occurrence\delta{}{})=f_{n}(\occurrence\delta{\Gamma_n}{})$.
Notice that, since $\Gamma_n$ appears above $\Gamma_m$ in the proof-tree,
every occurrence $\delta $ in $\Gamma_m$
can also be considered as an occurrence in $\Gamma_n$, denoted by
$\occurrence\delta{\Gamma_n}{}$.
 \end{enumerate}

Exactly the same definitions as in the proof of \Cref{dsk-soundness} work here.
 Since  $\{\Gamma_n\}_{n\in\omega}$ is an infinite branch in a $\dgla$-proof tree,
it must include infinitely many
nested  $\Box$-rules. Namely, let $I$ be an infinite set of indices
such that
$\{\Ax{$\Gamma_i$}\UI{$\Gamma_{i-1}$} \DP \}_{i\in I}$
is an infinite sequence of nested $\Box$-rules.
Hence, for each $i\in I$, there are $A_i$ and $\Theta_i$  such that 
$\Gamma_i=\seqsub{\Gamma_i}{\Theta_i,[A_i],\Box A_i}$ is the nested sequent
in the premise of the $\Box$-rule, with $\delta_i$ being the occurrence of
$A_i$ in the bracket.  Therefore, for every $i,j\in I$ with $j>i$,
$\delta_{j}$ is a strict sub-occurrence of $\occurrence{\delta_i}{\Gamma_{j}}{}$.
Define $w_{i}:=f_{i}(\delta_{i})$.
By items 3 and 4, for every
$i,j\in I$ with $i<j$, we have $w_i\R w_j$.
Given that $I$ is infinite, we have the desired infinite ascending sequence of worlds.
\end{proof}

\subsection{Completeness}

We define the following notions of cosaturation for a
non-well-founded nested sequent $\Gamma$:
\begin{itemize}
\item
\textit{$\neg$-cosaturated}: for every atomic formula $p$ and every $\Delta$ with
$\Gamma=\gsub{\Delta}$, either $p\nin \Delta$ or $\invert p\nin\Delta$.
\item
\textit{$\vee$-cosaturated}: for every $\Delta$  occurring in
$\Gamma$ with $\bigvee\Theta\in\Delta$, we have $\Theta\subseteq \Delta$.
\item
\textit{$\wedge$-cosaturated}: for every $\Delta$ occurring in
$\Gamma$ with $\bigwedge\Theta\in\Delta$, there is some $B\in\Theta\cap\Delta$. 
Notice that in particular this implies that $\top\nin\Delta$.
\item
w\textit{$\lozenge$-cosaturated}: for every $\Delta$ occurring in
$\Gamma$ with
$[\Theta],\lozenge B\in \Delta$, we have  $B\in\Theta$.
\item
\textit{$\Box$-cosaturated}:
for every $\Delta$ occurring in
$\Gamma$ with
$\Box B\in \Delta$, there is some $\Theta$ such that
$[\Theta]\in\Delta$ and $B\in\Theta$.
\item
\textit{weakly cosaturated}: all of the above items hold.
\item \textit{$\lozenge$-cosaturated}: for every $\Delta$ occurring in
$\Gamma$ with
$[\Theta],\lozenge B\in \Delta$, we have  $B,\lozenge B\in\Theta$.
Notice that this condition is stronger than w$\lozenge$-cosaturation.
\item \textit{cosaturated}: all of the above items hold.
\end{itemize}

Given a weakly cosaturated  $\Gamma$, we define a canonical model
$\kcal_\Gamma:=(W,\R,\EV,\SUP)$ corresponding to
$\Gamma$ as follows:
\begin{itemize}
\item We define $W$  to consist of all
occurrences in $\Gamma$.
\item Define $ \delta\R \theta$ iff $\theta$ is an immediate strict sub-occurrence of
$\delta$.
This in particular implies that if $\delta$ is the occurrence of $\Delta$
and $\theta$ is the occurrence of $\Theta$, then   $[\Theta]\in\Delta$.
\item $\SUP$ is the set of atomic subformulas of $\Gamma$.
\item $ \Delta\EV p$ iff $ p\nin\Delta$, for every $p\in\SUP$.
\end{itemize}
We then define $\kcal_\Gamma^t$ as the transitive closure of $\kcal_\Gamma$.
More precisely,
$\kcal_\Gamma^t:=(W,\Rt,\EV,\SUP)$, where $\Rt$ is the transitive closure of $\R$.
In other words,
$ \delta\Rt \theta$ iff $\theta$ is a strict sub-occurrence of
$\delta$. It is obvious that both $\kcal_\Gamma$ and $\kcal_\Gamma^t$ support every subformula of $\Gamma$.

\begin{lemma}[Truth]\label{saturation-dgla}
For every  weakly cosaturated non-well-founded nested sequent $\Gamma$,
every occurrence $\delta$ of $\Delta$ in $\Gamma$, and every
$B\in \Delta\cap\langba$,
we have $\kcal_\Gamma,\delta\nmodels B$.
Furthermore, if   $\Gamma$ is also cosaturated, then we have
$\kcal^t_\Gamma,\delta\nmodels B$.
\end{lemma}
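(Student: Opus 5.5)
Both claims are proved by induction on the construction of the $\langba$-formula $B$. The formulas are well-founded objects even though $\Gamma$ may be non-well-founded. The induction hypothesis is taken uniformly over all occurrences $\delta$ of all $\Delta$ in $\Gamma$: for every proper subformula $C$ of $B$ and every occurrence $\delta'$ of $\Delta'$ with $C\in\Delta'$, we have $\kcal_\Gamma,\delta'\nmodels C$ (respectively $\kcal^t_\Gamma,\delta'\nmodels C$). The case analysis is the same for $\kcal_\Gamma$ and $\kcal^t_\Gamma$, except for the $\lozenge$-case.

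\textbf{Literals and propositional connectives.} The literal cases use only the definition of $\EV$ and $\neg$-cosaturation. If $p\in\Delta$, then by definition not $\Delta\EV p$, so $\delta\nmodels p$. If $\invert p\in\Delta$, then $\neg$-cosaturation gives $p\nin\Delta$, so $\delta\EV p$ and $\delta\nmodels\invert p$.
\begin{itemize}
\item If $\bigvee X\in\Delta$, then $\vee$-cosaturation gives $X\subseteq\Delta$. The induction hypothesis refutes every $A\in X$ at $\delta$, so $\delta\nmodels\bigvee X$.
\item If $\bigwedge X\in\Delta$, then $\wedge$-cosaturation gives some $A\in X\cap\Delta$. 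The induction hypothesis refutes it at $\delta$, so $\delta\nmodels\bigwedge X$.
\end{itemize}

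\textbf{$\Box$-case.} If $\Box A\in\Delta$, then $\Box$-cosaturation gives $[\Theta]\in\Delta$ with $A\in\Theta$. The occurrence $\theta$ of this $\Theta$ is an immediate sub-occurrence of $\delta$, so $\delta\R\theta$, and hence also $\delta\Rt\theta$. By the induction hypothesis $\theta\nmodels A$, so $\delta\nmodels\Box A$ in both models.

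\textbf{$\lozenge$-case for $\kcal_\Gamma$.} Suppose $\lozenge A\in\Delta$ and $\delta\R\theta$. By definition of $\R$, the world $\theta$ is the occurrence of some $\Theta$ with $[\Theta]\in\Delta$. w$\lozenge$-cosaturation then gives $A\in\Theta$, and the induction hypothesis yields $\theta\nmodels A$. Since $\theta$ was an arbitrary successor, $\delta\nmodels\lozenge A$. This completes the first claim.

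\textbf{$\lozenge$-case for $\kcal^t_\Gamma$ (the main obstacle).} Here a successor $\theta$ of $\delta$ in $\Rt$ may be an arbitrarily deep strict sub-occurrence, so one step of cosaturation is not enough. I would handle this with an inner induction on the finite length $k$ of the chain $\delta=\delta_0\R\delta_1\R\cdots\R\delta_k=\theta$. Such a chain exists because every node in the tree has finitely many ancestors. The claim to prove is that $A,\lozenge A\in\Theta_i$ for all $1\le i\le k$, where $\Theta_i$ is the sequent at $\delta_i$.
\begin{itemize}
\item For $i=1$, apply $\lozenge$-cosaturation to $[\Theta_1],\lozenge A\in\Delta$.
\item Given $\lozenge A\in\Theta_i$ and $[\Theta_{i+1}]\in\Theta_i$, $\lozenge$-cosaturation gives $A,\lozenge A\in\Theta_{i+1}$.
\end{itemize}
Thus $A\in\Theta_k$, and the induction hypothesis for the subformula $A$ gives $\theta\nmodels A$ in $\kcal^t_\Gamma$. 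Hence $\delta\nmodels\lozenge A$. This inner induction is the only step needing care: it is where $\lozenge$-cosaturation, the strengthening of w$\lozenge$-cosaturation that propagates $\lozenge A$ into brackets, is used. It works because the outer induction on $B$ is uniform over all occurrences of $\Gamma$, so no well-foundedness of $\Gamma$ is required.
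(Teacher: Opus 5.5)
Your proof is correct and follows the paper's argument. Both prove the claim by induction on $B$, uniformly over all occurrences of $\Gamma$, handle each connective with the matching cosaturation condition, and treat the $\lozenge$-case as the only one that differs between $\kcal_\Gamma$ and $\kcal^t_\Gamma$. Your inner induction along the chain of immediate sub-occurrences, propagating $A,\lozenge A$ downward, spells out a step the paper compresses into the phrase ``by $\lozenge$-cosaturation we have $C\in\Theta$''; the extra step is needed because $\lozenge$-cosaturation as defined only concerns immediate sub-occurrences.
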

\begin{proof}
We prove this by induction on the complexity of $B$.
This means that by the induction hypothesis,
for every subformula $C$ of $B$ and every occurrence $\delta$ of $\Delta$ in $\Gamma$
with $C\in \Delta$, we have $\kcal_\Gamma,\delta\nmodels C$.
Let $B\in\Delta\cap\langba$ and let $\delta$ be an occurrence of $\Delta$ in $\Gamma$.
We need to show that
$\kcal_\Gamma,\delta\nmodels B$.
We have the following cases for $B$:
\begin{itemize}
\item $B=p$ is atomic: obvious by definition.
\item $B=\invert p$: since $\invert p\in \Delta$, by $\neg$-cosaturation we have
$p\nin\Delta$. Therefore, by definition, $\kcal_\Gamma,\delta\models p$.
Hence $\kcal_\Gamma,\delta\nmodels \invert p$.
\item $B=\bigwedge \Theta$: by $\wedge$-cosaturation, there is some $C\in\Theta\cap\Delta$.
	Therefore, by the induction hypothesis, $\kcal_\Gamma,\delta\nmodels C$.
	Thus $\kcal_\Gamma,\delta\nmodels \bigwedge\Theta$, as desired.
\item $B=\bigvee\Theta$: by $\vee$-cosaturation, we have $\Theta\subseteq \Delta$.
	Therefore, by the induction hypothesis we get
	$\kcal_\Gamma,\delta\nmodels C$ for
	every $C\in\Theta$.
	Thus $\kcal_\Gamma,\delta\nmodels \bigvee\Theta$, as desired.
\item $B=\Box C$: by $\Box$-cosaturation,
there is some $[\Theta]\in\Delta$ such that
$C\in\Theta$. Let $\theta$ be the occurrence of the mentioned $\Theta$.
Therefore, $\delta\R\theta$.
By the induction hypothesis,
$\kcal_\Gamma,\theta\nmodels C$,
and thus $\kcal_\Gamma,\delta\nmodels \Box C$.
\item $B=\lozenge C$:
let  $\theta\sqsupset \delta$ be an occurrence of some $\Theta$ in $\Gamma$.
By w$\lozenge$-cosaturation, we have $C\in\Theta$. Then by the induction hypothesis,
$\kcal_\Gamma,\theta\nmodels C$. Thus, by definition,
we get $\kcal_\Gamma,\delta\nmodels\lozenge C$, as desired.
\end{itemize}
The above argument works identically for the second statement of the lemma with the following
modifications.
First, replace $\kcal_\Gamma$ with $\kcal^t_\Gamma$
and $\R$ with $\Rt$.
We also have the following argument for the case $B=\lozenge C$:
\begin{itemize}
\item $B=\lozenge C$:
let  $\theta\tR \delta$ be an occurrence of some $\Theta$ in $\Gamma$.
Since $\theta$ is a strict sub-occurrence of $\delta$ and $\lozenge C\in\Delta$,
by $\lozenge$-cosaturation we have $C\in\Theta$. Then by the induction hypothesis,
$\kcal^t_\Gamma,\theta\nmodels C$. Thus, by definition,
we get $\kcal^t_\Gamma,\delta\nmodels\lozenge C$, as desired.\qedhere
\end{itemize}
\end{proof}


The \textit{completeness} of a (nested) sequent calculus $\sft$
for a class $\mathfrak{M}$ of Kripke models is defined as follows: given a (nested) sequent $\Gamma$,
if $\kcal\models \Gamma$ for every $\kcal\in\mathfrak{M}$ supporting $\Gamma$, then $\sft\vdash \Gamma$.

\begin{theorem}\label{dgla-completeness}
We have the following completeness results:
\begin{itemize}
\item $\dskk$ is complete for Kripke models.
\item $\dsk$ is complete for transitive Kripke models.
\item $\dgla$ is complete for transitive and conversely well-founded Kripke models.
\end{itemize}
\end{theorem}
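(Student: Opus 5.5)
We prove all three items at once, following the proof of \Cref{SCLA-sound-comp}: contrapositively, from the failure of provability we build a maximal partial proof tree, extract a countermodel from one of its branches, and then apply the Truth Lemma (\Cref{saturation-dgla}). Fix a nested sequent $\Gamma_0$ that is not derivable in the relevant calculus ($\dskk$, $\dsk$ or $\dgla$). We construct a canonical partial proof tree $T^*=\bigcup_n T_n$ by fair rule application. Enumerate $\sub{\Gamma_0}$ as $B_0,B_1,\ldots$ with every formula repeated infinitely often, and fix the pairing $\langle\cdot,\cdot\rangle$ as before. At stage $n+1=\langle i,j\rangle$, for each non-trivial leaf $\Gamma$ of $T_n$ and each occurrence $\delta$ in $\Gamma$ of some $\Delta$ containing $B_i$, apply the appropriate rule at $\delta$. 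If $B_i=\bigvee X$ and $B_j\in X$, apply $\vee$ adding $B_j$. If $B_i=\bigwedge X$, apply $\wedge$ with all premises. If $B_i=\Box C$, apply $\Box$, but only once per pair (occurrence, formula), so that brackets are not created endlessly at one place. If $B_i=\lozenge C$, apply ($w$)$\lozenge$ to every immediate bracket $[\Theta]\in\Delta$.

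A leaf has only finitely many occurrences, so each stage does finitely many successive applications along each branch. We schedule them as a finite chain of rule applications (with branching for $\wedge$). Bracket occurrences created later are also handled, since $B_i$ recurs infinitely often and occurrences persist upward (\cref{sec-limit}). Also every formula that ever appears lies in $\sub{\Gamma_0}$, by the sub-formula property.

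Since $\Gamma_0$ is not derivable, $T^*$ is not a proof. There are two cases.
\begin{enumerate}
\item There is a leaf that is not an axiom. Then no rule applies any more, so the leaf is already cosaturated in the relevant sense. Take $\Gamma^*$ to be that leaf.
\item There is an infinite branch. For $\dskk$ and $\dsk$, every infinite branch is simply a non-proof. For $\dgla$, some infinite branch violates the trace condition; otherwise $T^*$ would be a $\dgla$-proof, because all its leaves would be axioms. Put $\Gamma^*:=\lim_n\Gamma_n$ as in \cref{sec-limit}.
\end{enumerate}
In the $\dgla$ case, \Cref{rem-dgla} shows that $\Gamma^*$ is a general nested sequent, that is, it has no infinite nesting. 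Hence $\kcal^t_{\Gamma^*}$ is conversely well-founded: a strict sub-occurrence chain corresponds to a descending path in a conversely well-founded tree.

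The key step is verifying that $\Gamma^*$ is cosaturated (weakly cosaturated for $\dskk$). Property iii of \cref{sec-limit} gives that any formula in an occurrence of $\Gamma^*$ is present from some stage on, and fairness then forces the corresponding rule to have been applied later. This gives $\vee$-, $\wedge$- and $\Box$-cosaturation, and also ($w$)$\lozenge$-cosaturation for brackets present at that stage. Brackets created later are handled because $\lozenge C$ is revisited infinitely often. For $\dsk$ and $\dgla$, $\lozenge$-cosaturation must hold for all strict sub-occurrences, not just immediate ones. Each application of the $\lozenge$-rule copies $\lozenge C$ into the immediate bracket, so by induction on depth it propagates to every nested bracket, which gives exactly the transitive-closure condition used in \Cref{saturation-dgla}. $\neg$-cosaturation holds because no node on the branch is an axiom, and sequents only grow along branches.

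Then \Cref{saturation-dgla} gives $\kcal_{\Gamma^*},\root\nmodels B$ (respectively $\kcal^t_{\Gamma^*}$) for every $B\in\Gamma_0\cap\langba$. This model supports $\Gamma_0$ and is transitive, and in the $\dgla$ case also conversely well-founded. The root of $\Gamma_0$ may contain brackets, and these are treated using \Cref{lem-cut}: each occurrence of $\Gamma_0$ survives in $\Gamma^*$, and the Truth Lemma refutes its formulas at the corresponding worlds, so $\Gamma_0$ fails at the root world.

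I expect the main obstacle to be the bookkeeping of the fair scheduling. On the one hand, the construction must ensure that every formula in every occurrence, including occurrences created at arbitrarily late stages, is eventually treated. On the other hand, it must do so without generating spurious infinite $\Box$-nesting that would destroy well-foundedness in the $\dgla$ case. The latter is not actually an issue, since we pick a branch violating the trace condition. The subtle point is the fairness argument with countably many occurrences and formulas appearing dynamically, which I would handle with a priority enumeration of pairs (stage-of-appearance, formula index).
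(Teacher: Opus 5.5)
Your proposal is correct and follows essentially the paper's proof: a fair Sch\"utte-style construction of $T^*=\bigcup_n T_n$, then either a non-axiom leaf or the limit $\Gamma^*$ of a bad branch (a general nested sequent in the $\dgla$ case by \Cref{rem-dgla}), cosaturation from property iii plus fairness, and finally the Truth Lemma \Cref{saturation-dgla}. The differences are only bookkeeping: you process all occurrences of a leaf at each stage with a pairing $\langle i,j\rangle$ instead of the paper's triple coding with an occurrence index, you fire $\Box$ once per (occurrence, formula), and you spell out two points the paper leaves implicit, namely the propagation of $\lozenge C$ to all strict sub-occurrences and the refutation of the brackets of $\Gamma_0$ (the latter is a direct induction on the nesting of $\Gamma_0$, not an application of \Cref{lem-cut}).
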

\begin{proof}
Let $\sft$ be any of $\dskk$, $\dsk$ or $\dgla$. Also assume that
$\Gamma_0$ is a nested sequent such that $\sft\nvdash  \Gamma_0$.
By induction on $n$,
we construct a partial $\sft$ proof-tree $T_n$ as follows. The construction is such that
each $T_n$ is a tree of finite depth. Furthermore, $T_{n+1}$ is an extension of $T_n$ obtained by adding
some nodes above the leaves of $T_n$. Also, each
$T_n$ will be well-founded.

For a given nested sequent $\Gamma$ appearing as a label of a leaf in some
$T_n$, let 
$$\gsub{\Delta\ugam_0},\gsub{\Delta\ugam_1},\ldots, \gsub{\Delta\ugam_m}$$
be an enumeration of all occurrences in $\Gamma$.
We make this enumeration in such a way
that for any $\Gamma'$ which appears above $\Gamma$ in later steps of the construction,
the first $m$ occurrences remain the same as for $\Gamma$ (see \cref{sec-limit}).

Also, let $B_0,B_1,\ldots$ be an enumeration of all formulas in $\sub {\Gamma_0}$ such that
every formula occurs infinitely often.
Let $T_0$ be the tree with a single node labelled by the nested sequent $\Gamma_0$.
Also, assume that $n+1=\langle i,j,k\rangle$, i.e.~$n+1$ is the code of the triple $(i,j,k)$ of
numbers. We assume that this coding satisfies $i<\langle i,j,k\rangle $.
We then define $T_{n+1}$ by adding to $T_n$ some nodes on top of
its leaves, as described in what follows.
For each non-trivial\footnote{This means that for every $\Delta$ occurring in $\Gamma$  we have 
$\top\nin\Delta$ and
  for any atomic $p$, either $p\nin\Delta$ or $\invert p\nin\Delta$.} $\Gamma$
appearing as the label of a leaf in $T_n$,
with $\gsub{\Delta_j}$
(notice that this $j$ is the second component of $n+1$) being the   occurrence
of $\Delta_j$ in $\Gamma$,
we add corresponding new node(s) on top of $\Gamma$ according to the following cases.
\begin{itemize}
\item $B_i=\bigvee\Delta\in\Delta_j$ and $B_k\in \Delta$:
add $\gsub{\Delta\ugam_j,B_k}$ on top of $\Gamma$ with its edge labelled by $\vee$.
Notice that $k$ here is the third component of $n+1$.
\item $B_i=\bigwedge\Delta\in\Delta_j$:
for every $B\in\Delta$, add
$\gsub{B,\Delta\ugam_j}$ on top of $\Gamma$ with the label $\wedge$ on the edges.
This in particular means that if  $\Delta=\emptyset$, we do not add anything.
\item $B_i=\Box B\in\Delta_j$:
add $\gsub{[B],\Delta\ugam_j}$ on top of $\Gamma$ with the label $\Box$.
\item $B_i=\lozenge B$ and
  $\gsub{\Delta_j}$ is the immediate sub-occurrence of some other occurrence
$\theta$ of $\Theta$ in $\Gamma$
such that $B_i\in\Theta$:
notice that this implies $[\Delta\ugam_j]\in\Theta$.
Then in the case $\sft=\dskk$,
add $\gsub{\Delta\ugam_j,B}$ on top of $\Gamma$ with the label $\lozenge$.
In the other cases,
add $\gsub{\Delta\ugam_j,B,\lozenge B}$ on top of $\Gamma$ with the label $\lozenge$.
\item Otherwise: add nothing.
\end{itemize}
Then let $T^*:=\bigcup_{n=0}^\infty T_n$.
Given that $\Gamma_0$ is not $\sft$-derivable,
$T^*$ cannot be an $\sft$-proof.
By its definition, it is obvious that $T^*$
is a partial ($\dskk$) proof-tree.
Since $T^*$ is not a  $\sft$-proof, one of the following two cases occurs:
\begin{itemize}
    \item There is some leaf node $\Gamma$ in $T^*$ which is not the conclusion of any of the inference rules.
    This means that $\Gamma$ is cosaturated (in the case of $\sft=\dskk$, weakly cosaturated), and hence by \Cref{saturation-dgla} we have the desired countermodel.
    \item There is an infinite branch  $\Gamma_0,\Gamma_1,\ldots$ in $T^*$ such that in the case $\sft=\dgla$, this branch does not have infinitely many instances of $\Box$-rule nested to each other.
    So for the rest of this proof, assume that such an infinite branch exists.
\end{itemize}
For every $i$, let $S_i$ be a syntax-tree of $\Gamma_i$ as described in \cref{sec-limit}.
 Therefore, for every $i\in\omega$, we have $S_{i+1}\supset S_i$.
Then define $S^*:=\bigcup_{i\in\omega}S_i$.
\\[2mm]
\textit{Claim 1}.
We have the following properties for $S^*$:
\begin{itemize}
    \item[i.]  $S^* $  is a non-well-founded nested tree.
Let $\Gamma^*$ denote the non-well-founded nested sequent corresponding to $S^*$.
    \item[ii.] In the case $\sft=\dgla$,
this branch of the proof includes only finitely many nested applications of
the $\Box$-rule, and hence in this case $\Gamma^*$ is a general nested
sequent. However, in the two other cases, $\Gamma^*$ is just a non-well-founded
nested sequent.
\item[iii.] For any occurrence $\delta$  of some $\Delta^*$ in $\Gamma^*$ and any
$B\in\Delta^*\cap\langba$, there is some $n$ such that $\delta$ is also an occurrence
of some $\Delta_n$ in $\Gamma_n$ with $B\in\Delta_n$. Furthermore, $\delta$
remains an occurrence of some $\Delta_m$ in $\Gamma_m$  with $B\in\Delta_m$, for any $m>n$.
\end{itemize}
\vspace{1mm}
\textit{Proof of Claim 1.}
 The proofs of all the above-mentioned properties are straightforward and left to the reader.
\\[2mm]
\textit{Claim 2.} If $\sft=\dskk$, then $\Gamma^*$ is weakly cosaturated.
Otherwise, it is cosaturated.
\\[2mm]
\textit{Proof of Claim 2.} We need to show that $\Gamma^*$ satisfies all the conditions for
being (weakly) cosaturated:
\begin{itemize}
\item
\textit{$\neg$-cosaturated}:
let $p,\invert p\in\Delta^*$ for some occurrence $\delta$ of $\Delta^*$ in
$\Gamma^*$.
Hence, by Claim 1, there is some $n$ such that $\Gamma_n$ is trivial.
This means that by the definition of $T^*$, there should be no node above $\Gamma_n$ in the proof-tree $T^*$,
contradicting the infiniteness of
the sequence $\{\Gamma_i\}_{i\in\omega}$.
\item
\textit{$\vee$-cosaturated}:
let $\bigvee\Theta\in\Delta^*$ for some occurrence $\delta$ of $\Delta^*$ in $\Gamma^*$ and let
$B\in\Theta$.
Hence, by Claim 1, there is some $n$ such that $\bigvee\Theta$
belongs to the same occurrence $\delta$ of some  $\Delta$ in
$\Gamma_n$.
Choose $i>n$ such that $B_i=\bigvee\Theta$,
let $j$ be the index of the occurrence $\delta$ of $\Delta$ in $\Gamma_n$, and let $k$ be such that $B_k=B$.
Then for  $m:=\langle i,j,k\rangle $ we have $m>n$.
This implies\footnote{Here we are using the extra assumption
that the indices of the occurrences of $\Gamma$ remain unchanged in later steps.}
that in $T_m$ we must add $B$ to the occurrence $\delta$ of
$\Delta'$ in $\Gamma_m$. Thus $B\in\Delta' $, where $\delta$ is
the occurrence of $\Delta'$ in $\Gamma_m$. Therefore, by Claim 1,
$B\in \Delta^*$ (notice that $B\in  \langba$), as desired.
\item
\textit{$\wedge$-cosaturated}: let $\bigwedge\Theta\in\Delta^*$
for some occurrence $\delta$ of $\Delta^*$ in $\Gamma^*$.
Hence, by Claim 1, there is some $n$ such that $\bigwedge\Theta$
belongs to the same occurrence $\delta$ of  some $\Delta $ in
$\Gamma_n$.
Choose $i>n$ such that $B_i=\bigwedge\Theta$,
and let $j$ be the index of the occurrence $\delta$ of $\Delta$ in $\Gamma_n$,
i.e.~$\Delta=\Delta\ugam_j$. Then for  $m:=\langle i,j,0\rangle $  we have $m>n$.
This means that in $T_m$, for some $B\in\Theta$,
we must add $B$ to the occurrence $\delta$ of
$\Delta'$ in $\Gamma_m$.
Thus $B\in \Theta\cap\Delta' $, where $\delta$ is
the occurrence of $\Delta'$ in $\Gamma_m$.
Therefore, $B\in\Theta\cap\Delta^*$, as desired.
\item
w\textit{$\lozenge$-cosaturated}: (in the case $\sft=\dskk$)
let $\lozenge B,[\Delta^*]\in\Theta^*$
for some occurrence $\theta$ of $\Theta^*$ in $\Gamma^*$.  Also let $\delta$ be the
occurrence of $[\Delta^*]$ in $\Gamma^*$. This means that $\delta$ is an immediate
 sub-occurrence of $\theta$.
Hence, by Claim 1, there is some $n$ such that $\lozenge B$
belongs to the same occurrence $\theta$ of $\Theta$ in
$\Gamma_n$ and also $\delta$ is an occurrence of some $\Delta$ in
$\Gamma_n$\footnote{Notice that this occurrence might be for some nested
sequent different from $\Delta^*$.} such that $\delta$ is an immediate sub-occurrence of $\theta$.
Choose $i>n$ such that $B_i=\lozenge B$,
and let $j$ be the index of the occurrence $\theta$ of $\Theta$ in $\Gamma_n$.
Then for  $m:=\langle i,j,0\rangle $  we have $m>n$.
Then let $\Delta_m$ and $\Theta_m$ be such that $\delta$ and $\theta$ are the occurrences of
$\Delta_m$ and $\Theta_m$ in $\Gamma_m$, respectively.
This means that
$[\Delta_m]\in \Theta_m$ and
we must have $B\in\Delta_m$.
Therefore, by Claim 1, we get $B\in\Delta^*$, as desired.
\item
\textit{$\lozenge$-cosaturated}:
(in the case $\sft=\dsk,\dgla$)
almost identical to the proof of the previous case.
\item
\textit{$\Box$-cosaturated}:
let $\Box B\in\Delta^*$
for some occurrence $\delta$ of $\Delta^*$ in $\Gamma^*$.
Hence, by Claim 1, there is some $n$ such that $\Box B$
belongs to the same occurrence $\delta$ of $\Delta \subseteq\Delta^*$ in
$\Gamma_n$.
Choose $i>n$ such that $B_i=\Box B$,
and let $j$ be the index of the occurrence $\delta$ of $\Delta$ in $\Gamma_n$,
i.e.~$\Delta=\Delta\ugam_j$. Then for  $m:=\langle i,j,0\rangle $  we have $m>n$.
Also assume that $\delta$ is the occurrence of $\Delta'$ in $\Gamma_m$.
This means that $[B]$ must belong to $\Delta'$. Then Claim 1 implies
 that there is some $[\Theta^*]\in\Delta^*$ such that $B\in\Theta^*$.
 More precisely, the occurrence of this $[\Theta^*]$ in $\Gamma^*$ is the same as the
 occurrence of the added $[B]$ in $\Gamma_m$.
\end{itemize}
Now with the aid of Claim 2 and \Cref{saturation-dgla} we obtain the desired countermodel.
\end{proof}

\subsection{Cut: admissibility and elimination}
The admissibility of cut is a well-known corollary of the soundness and completeness theorems for a cut-free sequent calculus.
Here we also prove the admissibility of cut for all the sequent calculi $\dskk$, $\dsk$ and $\dgla$.
The cut rule in the setting of the Tait-style nested sequent calculus takes the following form:

\begin{prooftree}
    \Ax{$\gsub{\Delta,A}$}
    \Ax{$\gsub{\Delta,\neg A}$}
    \LLa{cut}
    \BI{$\gsub{\Delta}$}
\end{prooftree}
The admissibility of cut for  $\sft$ then means that, if we have both $\sft\vdash \gsub{\Delta,A}$
and $\sft\vdash \gsub{\Delta,\neg A}$, then $\sft\vdash \gsub{\Delta}$.

\begin{theorem}\label{cut}
Cut is admissible in $\dskk$, $\dsk$ and $\dgla$.
\end{theorem}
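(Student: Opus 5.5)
We argue semantically, exactly as in \Cref{cut0}, using the soundness theorems (\Cref{dsk-soundness} and \Cref{dgla-soundness}) together with the completeness theorem \Cref{dgla-completeness}. Let $\sft$ be one of $\dskk$, $\dsk$, $\dgla$, and let $\mathfrak{M}$ be the corresponding class: all Kripke models, transitive ones, or transitive conversely well-founded ones. Suppose $\sft\nvdash\gsub{\Delta}$.

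By \Cref{dgla-completeness} there is a model $\kcal\in\mathfrak{M}$ supporting $\gsub{\Delta}$ and a world $w$ with $\kcal,w\nmodels\gsub{\Delta}$. We may enlarge the support of $\kcal$ to include the atoms of $A$; the valuation on the new atoms is arbitrary, and this does not change the truth of formulas already supported. The model stays in $\mathfrak{M}$, since only $\SUP$ and $\EV$ change, not the frame.

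Let $\delta$ be the designated occurrence of $\Delta$. By \Cref{lem-cut}, applied contrapositively via the inductive argument in its proof, there is a world $u$ with $\reachable w{\rankd\delta}u$ and $\kcal,u\nmodels\Delta$. The key point is the following stronger statement: if the chosen $u$ also refutes a formula $A$, then $\kcal,w\nmodels\gsub{\Delta,A}$.

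To prove this, we refine the induction of \Cref{lem-cut} upward along the path from the root to $\delta$. We choose refuting worlds $w=v_0\R v_1\R\cdots\R v_r=u$, one for each occurrence on that path. Each $v_i$ refutes the formulas and brackets at its level that lie off the path. Going down the path, each bracket on the path is refuted because its child world refutes its contents. Once $A$ is added at level $r$, $u$ still refutes everything at that level, and so the refutation propagates down to the root.

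Since $\kcal,u\nmodels A$ or $\kcal,u\nmodels\neg A$, we get $\kcal,w\nmodels\gsub{\Delta,A}$ or $\kcal,w\nmodels\gsub{\Delta,\neg A}$. By soundness, one of the two premises is then unprovable in $\sft$, which proves admissibility.

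The main obstacle is extracting the path of witnesses cleanly. \Cref{lem-cut} as stated only gives some world refuting $\Delta$, not a chain compatible with refuting the surrounding context. I would restate it as a two-way lemma: $\kcal,w\nmodels\gsub{\Delta}$ if and only if there is a chain $w=v_0\R\cdots\R v_r$ such that each $v_i$ refutes the non-path part of the $i$-th occurrence on the path and $v_r$ refutes $\Delta$. Both directions follow by induction on $r$ directly from the clauses for sequents and brackets. This is also the only place where multiset or nesting bookkeeping arises; the rest is immediate from the soundness and completeness theorems.
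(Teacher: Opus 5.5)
Your proposal is correct and follows essentially the paper's argument: completeness yields a refuting world $w$, one descends along the path to the designated occurrence to a world $u$ refuting $\Delta$, splits on whether $u$ refutes $A$ or $\neg A$, and concludes by soundness. Your chain formulation, together with enlarging the support to cover $A$, makes explicit two things the paper leaves implicit. When it invokes \Cref{lem-cut} for the step back down to $w$, that step needs the specific $u$ built in the lemma's proof, reached through worlds refuting the surrounding context, not an arbitrary $u$ as the lemma's statement provides.
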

\begin{proof}
    We reason contrapositively.
    Let $\sft$ be any of the calculi mentioned, and suppose $\sft\nvdash \gsub\Delta$.
    Then by the completeness theorem \ref{dgla-completeness}, there is a Kripke model $\kmodel$
    and $w\in W$ such that $\kcal,w\nmodels \gsub\Delta$ and $\kcal$ has the frame condition corresponding to $\sft$.
    Let $n$ be the rank of the occurrence $\gsub\Delta$.
    Then \Cref{lem-cut} implies that there is some $u\in W$ such that $\reachable w n u$ and $\kcal,u\nmodels\Delta$.
    We have either $\kcal,u\nmodels A$ or $\kcal,u\nmodels \neg A$. Therefore, either $\kcal,u\nmodels(\Delta,A)$
    or $\kcal,u\nmodels(\Delta,\neg A)$.  Then \Cref{lem-cut} implies that either $\kcal,w\nmodels \gsub{\Delta,A}$ or
    $\kcal,w\nmodels\gsub{\Delta,\neg A}$. Thus, by the soundness theorems \ref{dsk-soundness} and \ref{dgla-soundness},
    we get either $\sft\nvdash \gsub{\Delta,A}$ or $\sft\nvdash \gsub{\Delta,\neg A}$, as desired.
\end{proof}

Let $\dskkc$, $\dskc$ and $\dglac$ respectively denote the additions of the cut rule to the
calculi $\dskk$, $\dsk$ and $\dgla$. Cut-elimination for
a sequent calculus $\sft$ means that, even after removing the cut rule from the inference rules of $\sft$,
we can still prove all theorems of $\sft$.
It is obvious from \Cref{cut} that $\dskkc$ and $\dskc$ enjoy cut-elimination. For example,
the reasoning for cut-elimination in $\dskkc$ is by easy induction from top to bottom on
the $\dskkc$ proof-tree. Namely, we start from the leaves of the proof tree and one by one replace all frontier instances of cut with some other proof without cut, given by \Cref{cut}.
Since the proof-trees here are well-founded, such an induction is legitimate.
Nevertheless, this argument does not work for $\dglac$, since the proofs there are not well-founded.

\begin{theorem}\label{cut-elimination}
All of the following sequent calculi enjoy cut-elimination: $\dskkc$, $\dskc$ and $\dglac$.
\end{theorem}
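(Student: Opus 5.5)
For $\dskkc$ and $\dskc$ I would simply use the argument sketched just before the statement. Proof-trees there are well-founded, so I argue by well-founded induction on the proof-tree: cut-free subproofs are replaced inside-out, and each topmost instance of cut is replaced by a cut-free proof supplied by \Cref{cut}.

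For $\dglac$ I would avoid any manipulation of non-well-founded proof-trees and go semantically, through soundness and completeness. The plan is to show that $\dglac$ is sound for transitive conversely well-founded Kripke models. Then, for any $\dglac$-provable $\Gamma_0$, every such model supporting $\Gamma_0$ validates it. Hence $\dgla\vdash\Gamma_0$ by the third item of \Cref{dgla-completeness}, which gives cut-elimination.

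The soundness proof for $\dglac$ repeats the proof of \Cref{dgla-soundness}. Assume $\kcal,w_0\nmodels\Gamma_0$ with $\kcal$ transitive. I build the branch $\Gamma_0,\Gamma_1,\ldots$ together with the functions $f_n$ satisfying items 1--5, exactly as before, and add one new case: $\Gamma_{n-1}=\gsub{\Delta}$ is obtained by cut from $\gsub{\Delta,A}$ and $\gsub{\Delta,\neg A}$.
\begin{itemize}
\item Let $\delta$ be the designated occurrence. By item 3, $\kcal,f_{n-1}(\delta)\nmodels\Delta$.
\item Since either $\kcal,f_{n-1}(\delta)\nmodels A$ or $\kcal,f_{n-1}(\delta)\nmodels\neg A$, I choose $\Gamma_n$ to be the corresponding premise.
\item The occurrences of $\Gamma_n$ coincide with those of $\Gamma_{n-1}$, so I set $f_n:=f_{n-1}$. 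Items 3--5 are preserved, because only $\Delta$ at $\delta$ gained a formula that is false at $f_{n-1}(\delta)$.
\end{itemize}
The axiom cases again cannot occur, so the branch is infinite. The trace condition on infinite branches of a $\dglac$-proof (infinitely many nested $\Box$-rules) then yields an infinite $\R$-chain $w_{i_0}\R w_{i_1}\R\cdots$, exactly as in \Cref{dgla-soundness}. This contradicts converse well-foundedness.

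The main obstacle is making sure the notion of a $\dglac$-proof is set up so that this works. First, cut must not introduce new occurrences, since it only adds a formula inside an existing occurrence; so \cref{sec-limit} and the nestedness relation for $\Box$-rules extend verbatim. Second, the trace condition is imposed on all infinite branches, including those passing through cuts. A cut formula $A$ need not lie in $\sub{\Gamma_0}$, but the semantic argument never uses the subformula property. The one point to check is support: if $A$ contains atoms outside $\SUP$, I first extend $\kcal$ to support them by evaluating the new atoms arbitrarily, which does not affect the truth of $\Gamma_0$.
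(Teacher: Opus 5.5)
Your proposal is correct and follows essentially the paper's route. For $\dskkc$ and $\dskc$ you use well-founded induction with \Cref{cut}, and for $\dglac$ you extend \Cref{dgla-soundness} to proofs with cut and then apply the completeness part of \Cref{dgla-completeness}. Your explicit cut case just spells out what the paper calls ``straightforward'': you choose the premise whose cut formula fails at $f_{n-1}(\delta)$, and item 3 at ancestor occurrences of $\delta$ then follows from item 4 exactly as in the $\vee$- and $\wedge$-cases. The same goes for your extension of the support of $\kcal$ to the atoms of cut formulas.
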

\begin{proof}
    We only prove cut-elimination for $\dglac$. Assume that $\dglac\vdash \Gamma$ for some
    nested sequent $\Gamma$. It is straightforward to observe that the soundness theorem
    \ref{dgla-soundness} can be extended to the soundness of $\dglac$. Therefore, $\Gamma$ is valid in
    all conversely well-founded transitive Kripke models. Then by completeness of $\dgla$ (\Cref{dgla-completeness}) we get $\dgla\vdash\Gamma$, as desired.
\end{proof}




\section{Formalized provability for set theories in infinitary languages}


The main goal of this section is to provide a framework for the study of what strong enough
infinitary theories can prove about provability in themselves. To accomplish this, we develop
an infinitary counterpart of the study of arithmetized provability predicates for arithmetical theories.
In particular, we use our framework to provide an infinitary provability semantics for the language
of infinitary modal logic.

Kripke-Platek set theory is a rather weak set theory: its axioms are Extensionality, Pair,
Union, Foundation (for arbitrary formulas), $\Delta_0$-Separation and $\Delta_0$-Collection.
It is, in a sense, the minimal theory that allows the development of the basics of recursion theory.
In spite of these limitations, it allows one to carry out a good amount of set theory, and has long been
a focus of study for the area of logic at the intersection of set theory, model theory and
recursion theory. A transitive set or class that is a model of $\kp$ is called \emph{admissible}.
Examples of admissible sets are given by $\mathbb{HF}$, the class of hereditarily finite sets,
and $L_{\omega_1^{ck}}$, where $\omega_1^{ck}$ is the smallest non-recursive ordinal.
We refer to \cite{barwise} for a very nice introduction to several aspects of Kripke-Platek set theory.

A very important feature of $\kp$ that we are going to use repeatedly is that it supports a good theory
of inductive definitions: namely, Gandy's theorem states that $\kp$ proves that the least fixed point
of a positive $\Sigma_1$ operator is definable by a $\Sigma_1$ formula. In the study of formalized
provability, many natural notions such as languages and provability relations are instances of
inductive definitions. Therefore, Gandy's theorem allows us to define most of the objects we
are interested in in a natural way.

Since \cite{barwise1969infinitary} it has been known that the infinitary first-order logic restricted
to admissible sets\footnote{By this we mean that infinite conjunctions and disjunctions are
allowed over sets of formulas which already belong to a fixed admissible set.}
is rather well-behaved. Thus, for our purpose of studying formalized provability
in the infinitary setting, the natural approach is, for each admissible set $A$, to consider
the $A$-fragment of the infinitary first-order language in the (set-theoretic) signature of $\kp$.
However, one relevant aspect in which these languages differ from the finitary language
of $\mathsf{PA}$ is that they possess no counterpart of the numerals (closed terms representing all
individual natural numbers). To resolve this deficiency we further expand the $A$-fragment of
the infinitary set-theoretic language by the family of constants $\{\Godelnum{a} \mid a \in A\}$,
and denote the resulting language by $\langl[A]$. We thus define a language in which we
have terms (in fact constants) referring to all individual formulas of the language itself,
mimicking the standard setup for the study of arithmetized provability, where arithmetical formulas
are representable within the arithmetical language as numerals for their G\"odel numbers.

In the study of formalized provability for arithmetical theories, $\Sigma_1$
arithmetical formulas play a crucial role since the formalized provability predicate is
itself $\Sigma_1$ and strong enough arithmetical theories prove all true $\Sigma_1$-sentences
(are $\Sigma_1$-complete). Thus we define the class of $\Sigma[A]$ formulas, 
which is a natural $\langl[A]$-counterpart of arithmetical $\Sigma_1$-formulas. In the case of
arithmetical theories, even Robinson's arithmetic $\mathsf{Q}$ ($\mathsf{PA}$ with induction dropped)
is $\Sigma_1$-complete. However, in our setting, because $A$ can be any admissible set, we have to
add axioms specific to the admissible set to recover the analogous property of $\Sigma[A]$-completeness
for our infinitary theories. This naturally leads to the following minimalistic theory
$\mathsf{R}[A]$, whose axioms are essentially the specifications of the contents of all individual sets $a\in A$.
The main point about $\mathsf{R}[A]$ is that it precisely axiomatizes the set of all formulas from the class $\Sigma[A]$ that are true in $A$.
Thus, for each admissible set $A$, we choose the theory $\kp[A]=\kp+\mathsf{R}[A]$ as our base theory
for the study of formalized provability.

Within this infinitary theory, we can define, using Gandy's theorem, what it means to be provable,
and give a definition of truth for $\Sigma[A]$ formulas in the infinitary language $\langl[A]$:
this is done in \Cref{def:deriv-inf-lang} and in \Cref{def:truth-lsigma}. All this is done in a
way analogous to what usually happens for the standard case of $\mathsf{PA}$. We notice in
\Cref{rem:proof-system}, however, that the relationship between the notion of derivability
and the notion of proof is more complicated in our setting than it is in arithmetic.

In this setting, we can prove the
Hilbert-Bernays-L\"ob derivability conditions and the Gödel-Löb axiom (\Cref{theo:deriv-ax} and
\Cref{cor:lobs-ax}) in a standard way: we follow the approach presented in \cite{logic-prov-boolos}
to achieve this. Moreover, we notice in \Cref{lem:barcan} that Barcan's formula holds as well.

Again using Gandy's theorem, we can introduce our infinitary version of the modal language
$\langl[A]^{\Box}$, a Hilbert-style calculus $\hgl[A]$ for this language, and the concept
of an interpretation $^*$ of a formula of this language into $\langl[A]$, translating $\Box$
as the provability predicate of $\kp[A]$. $\hgl[A]$ essentially corresponds to $\HGL {\omega_1}$,
introduced in \Cref{sec:hgl-omega}, although there are some subtle points in their relationship.

We conclude the section with \Cref{theo:low-bound-kp}, which proves that if a modal sentence $B$
is derivable in $\hgl[A]$, then for every interpretation $(\,)^*$, $\kp[A]$ proves $B^*$.

\bigbreak



We now start to present more formally the ideas given in the previous paragraphs.
The metatheory that we use for this section diverges from the style of presentation familiar from the book of Barwise.
Instead of fixing an admissible set, we have a fixed admissible class $A$.
We now define our meta-theory in more detail. The language of the meta-theory expands the language of set theory by an extra unary predicate $x\in A$ (to be read as ``$x$ is an element of the class $A$''). We can then naturally talk about $A$-bounded quantifiers $\exists x\in A\, \varphi$ and $\forall x\in A\, \varphi$, which should be treated as shorthand for $\exists x (x\in A \land \varphi)$ and $\forall x(x\in A\to \varphi)$, respectively. And we can talk about the relativization $\varphi^A$ of a formula $\varphi$ to the class $A$, obtained by replacing each unbounded quantifier $\exists x \,\varphi$ or $\forall x \,\varphi$ with the corresponding $A$-bounded quantifier $\exists x\in A\,\varphi$ or $\forall x\in A\, \varphi$, respectively.
The metatheory is the extension of $\kp$ by:
\begin{enumerate}
\item $\forall x\in A\forall y\in x(y\in A)$ ($A$ is transitive);
\item $\varphi^A$, for each axiom $\varphi$ of $\kp$;
\item extension of the scheme of foundation to all formulas of the expanded language.
\end{enumerate}
We denote this theory by $\kp + \mathsf{Tr}(A) + \kp^A$ and informally call it ``$\kp$ plus $A$ is an admissible class''.

The idea behind this approach is that it allows us to treat the external and the internal perspectives on the admissible set at the
same time. Namely, on the one hand, one way in which we will be using the results obtained here is by interpreting this theory within $\kp$ (or its extensions) by setting $A$ to be just $V$, which corresponds to ``internal reasoning'' within an admissible set.
Another option is to instead interpret this theory in $\kp$ by setting $A$ to be some admissible set $\adma$.
Under the second interpretation, the results that we prove will be ``external'' facts about admissible sets that
are established in $\kp$ as a meta-theory. Thus this approach allows us to unify the two perspectives.

For the purposes of this section, the main perspective that should be kept in mind is the ``internal'' perspective,
where  $A$ is interpreted as $V$. However, in the next section,
and in particular in \Cref{theo:compl-di-syst}, we do use the ``external'' perspective when linking the deep inference system $\dgla$ with the provability interpretation of the modalities.



In what follows, we denote by $\Sigma^A$ the class of formulas $\varphi$ that are of the form $\psi^A$ for some $\Sigma$-formula $\psi$.  When reasoning in $\kp + \mathsf{Tr}(A) + \kp^A$ and asserting that something is definable by a $\Sigma^A$-formula, we furthermore assume that the formula can have parameters, but those parameters have to be from the class $A$.

The following is a crucial technical result that we will repeatedly use in this section to allow us to formally introduce various notions via inductive definitions.
\begin{theorem}[Gandy's theorem for classes]\label{thm:ind-def}
  Let $\varphi(\vec x, S)$ be a
  $\Sigma^A$-formula with an extra fresh predicate letter $S$ of the same arity as the list of variables $\vec{x}$ such that $S$ occurs only positively.
  Then, there is a $\Sigma^A$-formula $\mathsf{Fix}_\varphi(\vec x)$ such that
  \[\kp + \mathsf{Tr}(A) + \kp^A\vdash \mathsf{Fix}_\varphi(\vec x) \biimp \forall \vec{x}\in A (\varphi(\vec x,\lambda \vec{x}.\mathsf{Fix}_\varphi(\vec x))),\]
  and furthermore $\kp + \mathsf{Tr}(A) + \kp^A$ verifies that $\mathsf{Fix}_\varphi$ describes the minimal upper pre-fixed point of $\varphi$; more formally the latter condition means that
  \[\kp + \mathsf{Tr}(A) + \kp^A\vdash \forall \vec{x}\in A (\varphi(\vec{x},\lambda\vec{x}. \theta(\vec{x}))\to \theta(\vec{x}))\to \forall \vec{x}\in A(\mathsf{Fix}_\varphi(\vec{x})\to \theta(\vec{x})),\]
  for any formula $\theta$ of the language of $\kp + \mathsf{Tr}(A) + \kp^A$.
\end{theorem}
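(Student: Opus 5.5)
The natural strategy is to reduce the statement to the classical Gandy theorem inside $\kp$, by unfolding the class $A$ and reproducing the standard stage-by-stage construction relativized to $A$.

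First, since $\varphi$ is $\Sigma^A$ and $S$ occurs only positively, I would define the approximations $I_\alpha$ by recursion on ordinals $\alpha\in A$:
\[
I_\alpha=\{\vec x\in A : \varphi(\vec x,\textstyle\bigcup_{\beta<\alpha} I_\beta)\}.
\]
Then I would set $\mathsf{Fix}_\varphi(\vec x)$ to be $\exists \alpha\in A\,\exists f\in A\,(f \text{ is the sequence } \langle I_\beta:\beta\le\alpha\rangle \wedge \vec x\in f(\alpha))$. For this to make sense, each stage $I_\alpha$ must be a set in $A$. This does not follow directly, since $\varphi$ is only $\Sigma$, not $\Delta_0$.

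The standard fix (Barwise, Ch.~VI) is to work with the $\Sigma$-recursion on ``positive derivations'' instead. A witness for $\vec x$ is a well-founded tree in $A$ whose root is $\vec x$ and whose nodes are justified by instances of $\varphi$. Here $S$ is replaced by membership in the set of children, and positivity ensures that a set of children suffices. So I would define $\mathsf{Fix}_\varphi(\vec x)$ as ``there exists $t\in A$ which is a derivation tree for $\vec x$''. This is $\Sigma^A$, since being a derivation tree is $\Delta_0$ in $t$ plus the $\Sigma^A$ conditions at nodes, and $\Sigma^A$ is closed under bounded quantification by $\Sigma$-reflection in $A$ (the axiom $\kp^A$).

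Second, I would verify the fixed-point equivalence. For $\Rightarrow$, the root node's justification gives $\varphi(\vec x,\{\text{children}\})$; each child has a subtree witness, and monotonicity (positivity) lifts this to $\varphi(\vec x,\mathsf{Fix}_\varphi)$. For $\Leftarrow$, given $\varphi(\vec x,\mathsf{Fix}_\varphi)$, $\Sigma$-reflection in $A$ yields a set $b\in A$ of tuples actually used, each lying in $\mathsf{Fix}_\varphi$. $\Sigma$-collection in $A$ then gives a set of derivation trees for them, and these assemble into a tree for $\vec x$. The main obstacle is this step: making precise that a positive $\Sigma$ formula evaluated on a class can be replaced by its evaluation on a set subclass. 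I would handle it by proving, by induction on the build-up of $\Sigma$-formulas, a persistence/reflection lemma: if $\varphi(\vec x,P)$ holds with $P$ occurring positively, then there is $b\in A$ with $b\subseteq P$ and $\varphi(\vec x,b)$.

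Third, minimality follows by foundation for the expanded language, which is available in the metatheory. Assume $\forall\vec x\in A(\varphi(\vec x,\theta)\to\theta(\vec x))$. By $\in$-induction on derivation trees $t$, every node's tuple satisfies $\theta$: the children satisfy $\theta$ by the induction hypothesis, and positivity gives $\varphi(\vec x,\theta)$ from $\varphi(\vec x,\text{children})$. Hence $\mathsf{Fix}_\varphi\subseteq\theta$.
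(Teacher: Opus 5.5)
Your proof is correct in outline and shares the paper's architecture: a $\Sigma^A$ definition of $\mathsf{Fix}_\varphi$ via set-sized witnesses in $A$, reflection for the right-to-left direction, and foundation in the expanded language for minimality. The witnesses and the key step differ, however.

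The paper uses ordinal-indexed approximation sequences: functions $s\in A$ with $\mathrm{dom}(s)\in\mathsf{Ord}$ such that every $y\in s(\alpha)$ satisfies $\varphi(y,\bigcup_{\beta<\alpha}s(\beta))$. For the right-to-left direction it reflects $\varphi(\vec x,\mathsf{Fix}_\varphi)$ to some $a\in A$. It then gathers \emph{all} approximations lying in $a$ by $\Delta_0$-separation, since the relativized condition is $\Delta_0$ in the parameter $a$. It merges them stagewise (by monotonicity the merge is again an approximation) and puts $\vec x$ at the next stage.

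This avoids two things your route needs. The first is your separate positive-support lemma, proved by external induction on $\Sigma$-formulas. The second is your collection step, where you must ensure that every collected set really is a derivation tree for an element of $b$; for this, use strong $\Sigma$-collection, which $\mathsf{KP}$ proves. What your trees buy is locality: they are the multirule derivations of Remark~\ref{rem:proof-system}. Letting several collected trees with the same root label sit among the children is harmless, so no choice is needed.

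One point needs care: well-foundedness must come from the coding, not from an explicit clause. ``This relation is well-founded'' is a $\Pi_1$ condition; it becomes $\Sigma_1$ only under Axiom $\beta$, as used in the proof of Theorem~\ref{theo:compl-di-syst}. Such a clause would therefore destroy the $\Sigma^A$ form of $\mathsf{Fix}_\varphi$. Moreover, your minimality step needs induction along the tree for arbitrary $\theta$. The foundation scheme supplies this along $\in$ or along ordinals, not along an abstract relation.

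So code trees as nested sets $\langle \vec y, C\rangle$ with $C$ a set of subtrees. Witness derivability by a set $D$ of such subtrees, closed under children, on which the $\varphi$-condition is checked locally. Then well-foundedness is automatic, as your $\in$-induction already presupposes. In the paper this job is done by the requirement $\mathrm{dom}(s)\in\mathsf{Ord}$.
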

We sketch a proof of this Theorem in \cref{sec:proof-gandy-classes}.

We now illustrate how Gandy's theorem is used with a toy example of an inductive definition; later we will use it freely in an analogous manner for more complex inductive definitions. Suppose we want to define the relation $x\in^* y$ on $A$ meaning that a set $x$ is an element of the transitive closure of $y$. This is naturally the smallest binary relation  $\in^*$ such that
\begin{enumerate}
    \item $x\in y$ $\Rightarrow$ $x\in^* y$;
    \item $\exists y'\in y(x\in^* y')$ $\Rightarrow$ $x\in^* y$.
\end{enumerate}
Note that here we have a list of clauses (two for this toy example), each of which says that if some condition $\chi_i(x,y,\in^*)$
holds, then $x\in^* y$. In our case, $\chi_0(x,y,S)$
is $x\in y$ and $\chi_1(x,y,S)$ is $\exists y'\in y(S(x,y'))$. An important property of these clauses is that the occurrences of the binary predicate $\in^*$ in $\chi_i(x,y,\in^*)$ are all positive.  We now define the formula $\varphi(x,y,S)$ to be
$\chi_0(x,y,S)\lor \chi_1(x,y,S)$; in the general case we take the disjunction over all clauses of a given
inductive definition. As long as all the $\chi_i(x,y,S)$ are $\Sigma$-formulas with all occurrences of $S$ positive (which, of course, is true in our particular example), then so is $\varphi(x,y,S)$.
We then use Gandy's theorem to construct the corresponding fixed point, which we denote by $\in^*$.
The minimality condition from Gandy's theorem is the condition that $\in^*$ is indeed the smallest
binary relation satisfying the two implications above.

We note that the construction in the previous paragraph naturally gives rise to a monotone operator $\Gamma_{\in^*}(S)$ on $\Sigma^A$-definable binary relations. Namely, for a binary relation $R$ on $A$ given by $\psi(x,y)$, the value $\Gamma_{\in^*}(R)$ is exactly the binary relation definable by $\varphi(x,y,\lambda x,y.\psi)$. Of course, $\in^*$ is the least fixed point of this operator. And the minimality principle from Gandy's theorem can be phrased as the property that for any other $\Sigma^A$ binary relation $R$, if $R\supseteq \Gamma_{\in^*}(R)$, then $R\supseteq (\in^*)$.

We can use this machinery of inductive definitions to naturally define infinitary first-order languages.
Namely, given a $\Delta^A$-definable first-order signature, using a straightforward coding we define the $A$-fragment of the
infinitary language of this signature as the least class of formulas containing all atomic formulas and closed under
the formation of formulas using the connectives $\lnot,\to$, the quantifiers $\forall$ and $\exists$, and the formation of infinitary conjunctions
$\bigwedge \Phi$ and disjunctions $\bigvee \Phi$, as long as the sets of formulas $\Phi$ lie in $A$.
The specific case that we are interested in in the present paper is the language with equality $=$, the binary
membership predicate $\in$, and the constants $\Godelnum{a}$ for each $a\in A$.
Since this definition can be carried out via Gandy's Theorem, $\langl[A]$ is a $\Sigma^A$-class.
We denote this infinitary language by $\langl[A]$. We give a formal
definition of $\langl[A]$ in \Cref{def:langl}.

Next, we define what it means for a formula $\varphi$ of $\langl[A]$ to be provable from a certain theory.
This is analogous to an approach to the definition of theoremhood suggested in \cite{barwise}, along with
several other places, but we prefer to stay closer to the Hilbert calculus $\HCLA$ introduced in \cref{sec-HCLA}.


\begin{definition}\label{def:deriv-inf-lang}
  An $A$-\emph{theory} $\ttt$ is a $\Sigma^A$-subclass of $\langl[A]$-sentences. Elements of $\ttt$ are called \emph{axioms} of $\ttt$.

  For $A$-theories we define the relation $\ttt\vdash^A \varphi$
  on $\langl[A]$ as the smallest relation such that:
  \begin{itemize}
  \item $\ttt\vdash^A \varphi$ holds for every $\varphi\in \ttt$;
  \item $\ttt\vdash^A \varphi$ holds when $\varphi$ is an instance of a substitution of $A,B,C$ with formulas $\theta,\psi,\eta$ in $\langl[A]$ in the axioms  $1$ to $6$ of $\HCLA$;
  \item if $\ttt\vdash^A \varphi$ and $\ttt\vdash^A \varphi\imp\psi$, then $\ttt\vdash^A \psi$
  (modus ponens);
  \item for every set of $\langl[A]$-formulas $\Phi\in A$ and $\varphi\in \Phi$, we have the axiom $\bigwedge\Phi\to \varphi$ (conjunction axioms);
  \item if $\ttt\vdash^A \varphi\imp\psi$ for every $\psi$ in a set of $\langl[A]$ formulas
  $\Psi\in A$, then $\ttt\vdash^A \varphi\imp\bigwedge \Psi$ (conjunction rule);
  \item for every set of $\langl[A]$-formulas $\Phi\in A$ and $\varphi\in \Phi$, we have the axiom $\varphi\to\bigvee \Phi$ (disjunction axioms);
  \item if $\ttt\vdash^A \varphi\imp\psi$ for every $\varphi$ in a set of $\langl[A]$ formulas
  $\Phi\in A$, then $\ttt\vdash^A \bigvee \Phi \imp \psi$ (disjunction rule);
  \item $\forall x \varphi(x)\to \varphi(x)$, for each $\varphi(x)\in\langl[A]$ and $\langl[A]$-term $t$;
  \item if $v\in\Var$ is not a free variable of $\varphi$ and $\ttt\vdash^A \varphi\imp\psi$,
  then $\ttt\vdash^A \varphi\imp\forall v \psi$ ($\forall$-Bernays rule);
    \item $\varphi(t)\to\exists x \varphi(x)$, for each $\varphi(x)\in\langl[A]$ and a $\langl[A]$-term $t$;
  \item if $v\in\Var$ is not a free variable of $\psi$ and $\ttt\vdash^A \varphi\imp\psi$,
  then $\ttt\vdash^A \exists v \varphi\imp \psi$ ($\exists$-Bernays rule);
  \item $\ttt\vdash^A \varphi$, for all of the axioms of equality: \begin{itemize} \item $\ttt\vdash^A x=y\to (\varphi(x) \to \varphi(y))$, for each $\varphi(x)\in \langl[A]$,
  \item $\ttt\vdash^A x=x$,
  \item $\ttt\vdash^A x=y \to (y=z \to x=z)$, \item $\ttt\vdash^A x=y \to y=x$.\end{itemize}
  \end{itemize}
  Let $\proov(x,\varphi)$ be a $\Delta_0$ formula
  defining $\ttt\vdash^A\varphi$, i.e.\ the formula such that $\ttt\vdash^A\varphi$ if and only
  if $\exists x\in A(\proov (x,\varphi))$. Such a formula exists since in $\kp$ every $\Sigma$-formula $\psi$ can be equivalently transformed to a formula of the form $\exists x\psi'(x)$, where $\psi'$ is $\Delta_0$.  In our case we do this transformation within the admissible class $A$.
\end{definition}

\begin{remark}
    In terms of Barwise's notation, the previous definition in essence says that $\ttt\vdash^A\varphi$
    holds if $\varphi$ is contained in every validity property that contains $\ttt$, rather than saying that $\varphi$ has an infinitary proof. The two notions are, of course, equivalent.
\end{remark}

\begin{remark}\label{rem:proof-system}
    Analogously to what happens in the case of admissible sets, we should be careful
    about the meaning of the formula $\proov(x,\varphi)$: namely, we should not view $x$
    as giving a proof tree of $\varphi$ in the standard sense of the term, due to the fact that,
    in general, $A$ will not be a model of the axiom of choice.
    To give an intuitive idea of the problem, suppose that we want to deduce that, for a set
    $\Phi$ of formulas, if every $\varphi\in \Phi$ has a proof in $A$, then so does $\bigwedge \Phi$.
    If we were to use a ``standard'' notion of proof, then a proof $p_{\bigwedge\Phi}$ of
    $\bigwedge\Phi$ would be something like a function mapping every $\varphi\in\Phi$ to a
    proof $p_\varphi$: the problem is that, in the absence of the axiom of choice,
    we cannot pick \emph{one} proof of $\varphi$. What we can do in $\kp$ is use strong
    $\Sigma$ replacement to produce a function $f$ whose domain is $\Phi$ and such that $f(\varphi)\neq\emptyset$ for every $\varphi$ in the domain, and every $p\in f(\varphi)$ is a
    proof of $\varphi$. Hence, we adapt our notion of proof to allow for this kind of derivation,
    where ``multirule'' derivations of this form are permitted. We remark that this is exactly
    the approach followed in \cite{barwise}: we refer in particular to Chapter III.5
    there for a more thorough discussion of the topic.

\end{remark}

A class of formulas playing a role analogous to the role of $\Sigma_1$-formulas for arithmetical theories is the class of $\LSigma[A]$ formulas. These are the obvious analogue of $\Sigma$ formulas for the infinitary language $\langl[A]$. Namely, the class of $\LSigma[A]$ formulas is the smallest class of $\langl[A]$ formulas containing all the literals (i.e., all atomic formulas and their negations) and closed under infinitary conjunction, infinitary disjunction, bounded universal quantification, and existential quantification. It is easy to formalize this definition using Gandy's theorem, which gives that the class of $\LSigma[A]$ formulas is a $\Sigma^A$ class.


Among their most important properties is the fact
that $\LSigma[A]$ formulas admit a simple (i.e., $\Sigma^A$) definition of truth $\models^A_\Sigma$. We formally define it below.
As usual, the definition is given implicitly using Gandy's Theorem.
\begin{definition}\label{def:truth-lsigma}
  We let $\models^A_\Sigma\varphi$ be the least relation on the $\Sigma^A$-class of the $\LSigma[A]$-sentences such that:
  \begin{itemize}
  \item if $\varphi$ is $\Godelnum{a}\in\Godelnum{b}$ and $a\in b$, then $\models^A_\Sigma\varphi$, and
  \item if $\varphi$ is $\Godelnum{a}=\Godelnum{a}$, then $\models^A_\Sigma\varphi$, and
  \item if $\varphi$ is $\neg\psi$ and $\psi$ is $\Godelnum{a}\in \Godelnum{b}$ for $a\not\in b$, then $\models^A_\Sigma\varphi$, and
  \item if $\varphi$ is $\neg \psi$ and $\psi $ is $ \Godelnum{a}=\Godelnum{b}$ for $a\neq b$, then $\models^A_\Sigma\varphi$, and
  \item if $\varphi$ is $\bigwedge  \Psi$, for some set of formulas $\Psi$, and for every $\psi\in \Psi$ we have that $\models^A_\Sigma\psi$, then $\models^A_\Sigma\varphi$, and
  \item if $\varphi$ is $\bigvee  \Psi$, for some set of formulas $\Psi$, and for at least one $\psi\in \Psi$ we have that $\models^A_\Sigma\psi$, then $\models^A_\Sigma\varphi$, and
  \item if $\varphi$ is $\forall x\in \Godelnum{a} \psi(x)$ and for every $b\in a$ we have $\models^A_\Sigma \psi(\Godelnum{b})$, then $\models^A_\Sigma \varphi$, and
  \item if $\varphi$ is $\exists x \psi(x) $ and for some $a\in A$ we have that $\models^A_\Sigma \psi(\Godelnum{a})$, then $\models^A_\Sigma \varphi$.
  \end{itemize}
  We denote the corresponding positive operator $\Gamma_{\models^A_\Sigma}(X)$.
\end{definition} 

We want to see that, for sentences of $\LSigma[A]$, there is a correspondence between
truth and provability from a theory $\ttt$, provided that $\ttt$ is strong enough.
\begin{definition}\label{def:r-a}
  We let $\mathsf{R}[A]$ be an $A$-theory with the following axioms:
\begin{enumerate}
    \item \label{R_A_ax1}$\Godelnum{a} \in \Godelnum{b}$, for $a,b\in A$ such that $a\in b$;
    \item \label{R_A_ax2} $\neg(\Godelnum{a}\in \Godelnum{b})$, for $a,b\in A$ such that $a\not\in b$;
    \item \label{R_A_ax3} $\forall x (x\in \Godelnum{a} \imp \bigvee_{b\in a}(x=\Godelnum{b}) )$, for $a\in A$.
\end{enumerate}
\end{definition}

\begin{remark}
    Our theory $\mathsf{R}[A]$ is a direct analogue of Robinson's arithmetic $\mathsf{R}_0$. The theory $\mathsf{R}_0$
    is axiomatized by numerical instances of certain elementary
    facts of arithmetic and has exactly the same theorems as the arithmetical theory axiomatized by all true $\Sigma_1$-sentences.
    It is a slight alteration of Robinson's arithmetic $\mathsf{R}$ from \cite{tarski1953undecidable},
    and can be found in more modern presentations of the subject, for instance, \cite{beklemishev2010godel}.
\end{remark}

\begin{lemma}\label{lem:truth-imp-prov}
  Let $\varphi$ be a $\LSigma[A]$ sentence, and let $\ttt$ be an $A$-theory extending $\mathsf{R}[A]$. Then, if $\models^A_\Sigma\varphi$ holds,
  so does $\ttt\vdash^A \varphi$.
\end{lemma}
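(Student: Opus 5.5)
The natural approach is to use the minimality clause of Gandy's theorem (\Cref{thm:ind-def}). The relation $\models^A_\Sigma$ is the least fixed point of the positive operator $\Gamma_{\models^A_\Sigma}$. So it suffices to show that the $\Sigma^A$-definable class
\[
P:=\{\varphi : \varphi \text{ is a } \LSigma[A]\text{-sentence and } \ttt\vdash^A\varphi\}
\]
is closed under this operator, i.e.\ $\Gamma_{\models^A_\Sigma}(P)\subseteq P$. Minimality then yields $\models^A_\Sigma\varphi\Rightarrow \ttt\vdash^A\varphi$. This is an induction on the inductive definition of truth, carried out clause by clause. The class $P$ is indeed $\Sigma^A$, because $\ttt\vdash^A$ is itself a Gandy fixed point and the $\LSigma[A]$-sentences form a $\Sigma^A$ class; moreover the minimality principle holds for arbitrary $\theta$ anyway.

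The atomic clauses are immediate. For $\Godelnum a\in\Godelnum b$ with $a\in b$, and for $\neg(\Godelnum a\in\Godelnum b)$ with $a\notin b$, these are axioms \ref{R_A_ax1} and \ref{R_A_ax2} of $\mathsf{R}[A]$. The sentence $\Godelnum a=\Godelnum a$ is an equality axiom. For $\neg(\Godelnum a=\Godelnum b)$ with $a\neq b$, use extensionality-free reasoning: without loss of generality there is $c\in a\setminus b$ (or $c\in b\setminus a$, handled symmetrically). Then $\Godelnum c\in\Godelnum a$ and $\neg\Godelnum c\in\Godelnum b$ are axioms of $\mathsf{R}[A]$. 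The equality axiom $x=y\to(\Godelnum c\in x\to\Godelnum c\in y)$, instantiated via the $\forall$-axiom and propositional reasoning in $\HCLA$ axioms 1--6, then gives $\neg(\Godelnum a=\Godelnum b)$.

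For the connective clauses, $\bigvee\Psi$ follows from a single provable disjunct by the disjunction axiom and modus ponens. For $\bigwedge\Psi$, knowing $\ttt\vdash^A\psi$ for all $\psi\in\Psi$, derive $\top\to\psi$ (using axiom 1 with a fixed provable $\top$) for each $\psi$, apply the conjunction rule to get $\top\to\bigwedge\Psi$, and finish with modus ponens. The existential clause uses $\varphi(\Godelnum a)\to\exists x\varphi(x)$.

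The bounded universal clause $\forall x\in\Godelnum a\,\psi(x)$ is the only place where axiom \ref{R_A_ax3} is needed, and it is the main obstacle. From $\ttt\vdash^A\psi(\Godelnum b)$ for all $b\in a$, I would derive $x=\Godelnum b\to\psi(x)$ for each $b$ via the equality axioms (with symmetry). The disjunction rule then gives $\bigvee_{b\in a}x=\Godelnum b\to\psi(x)$; note that the set $\{x=\Godelnum b:b\in a\}$ lies in $A$ by $\Delta_0$/$\Sigma$-replacement. Chaining with axiom \ref{R_A_ax3}, instantiated at $x$, gives $x\in\Godelnum a\to\psi(x)$. Generalising with the $\forall$-Bernays rule, taking the antecedent to be $\top$, and applying modus ponens completes the case.

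The delicate point in all of this is that every step quantifying over a whole set of premises — the conjunction rule, and the disjunction rule in the bounded universal case — must be justified inside $\kp$ without choice. Since $\ttt\vdash^A$ is defined as a least fixed point rather than via explicit proof objects, the closure clauses apply directly; this is the reason for working with that definition.
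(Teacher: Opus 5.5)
Your proof is correct and follows the paper's argument essentially step for step. You use the same case analysis with the same ingredients: axioms 1 and 2 of $\mathsf{R}[A]$ and the equality axioms for the literals, the conjunction rule applied to $\top\to\psi$, the axiom $\psi(\Godelnum{a})\to\exists x\,\psi(x)$, and axiom 3 of $\mathsf{R}[A]$ for the bounded universal quantifier. The only differences are in packaging: you run the induction along the least-fixed-point definition of $\models^A_\Sigma$ via the minimality clause of Gandy's theorem rather than on the complexity of $\varphi$, which spares you the inversion property of $\models^A_\Sigma$, and in the bounded case you apply the disjunction rule directly to the implications $x=\Godelnum{b}\to\psi(x)$ instead of first deriving $\bigwedge_{b\in a}\psi(\Godelnum{b})$.
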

\begin{proof}


    We proceed by induction on the complexity of the formula $\varphi$.

    If $\varphi$ is an atomic formula, then the claim basically corresponds to the fact that $\mathsf{R}[A]$ proves every true closed literal of the language $\langl[A]$ (sentences of the form $\Godelnum{a}\in \Godelnum{b}$, $\lnot \Godelnum{a}\in \Godelnum{b}$, $\Godelnum{a} = \Godelnum{b}$, or $\lnot \Godelnum{a} = \Godelnum{b}$).
    For this fact, the cases of true membership relations follow from the axioms \ref{R_A_ax1}.\ of $\mathsf{R}[A]$, and the cases of true negated membership relations follow from the axioms \ref{R_A_ax2}.\ of $\mathsf{R}[A]$. The only cases of true equalities are $\Godelnum{a}=\Godelnum{a}$, which constitute a first-order provable property of the equality predicate. Finally, to prove sentences of the form $\lnot \Godelnum{a} = \Godelnum{b}$ for $a\ne b$, we reason as follows. Without loss of generality we assume that there is $c\in a$ such that $c\notin b$ (the reasoning when there is $c\in b$ such that $c\notin a$ is analogous). Then we note that $\mathsf{R}[A]\vdash^A \Godelnum{c} \in \Godelnum{a}\land \lnot \Godelnum{c}\in \Godelnum{b}$, and via first-order reasoning with equality we conclude that $\mathsf{R}[A]\vdash^A \lnot \Godelnum{a} = \Godelnum{b}$.

    The cases of conjunction and disjunction are easy.
    Suppose $\varphi$ is $\bigwedge\Psi$; then from the assumption $\models^A_\Sigma\varphi$
    we have that $\models^A_\Sigma\psi$ holds for every $\psi\in\Psi$, and hence so does $\ttt\vdash^A\psi$.
    An application of the rule of conjunction gives the desired result (technically,
    we should first conclude that $\ttt\vdash^A \top \imp \psi$, where $\top$, as before, is $\bigwedge \emptyset$).

    The case of disjunction is analogous to the case of conjunction.


    We now deal with the case of $\varphi$ being $\exists x\psi$. Let $a\in A$ be such that
    $\models^A_\Sigma \psi(\Godelnum{a})$; then by the inductive assumption we have that $\ttt\vdash^A  \psi(\Godelnum{a})$.
    Since $\psi(\Godelnum{a})\imp \exists x\psi(x)$ is one of the axioms of $\ttt$, by modus ponens
    we conclude that $\ttt\vdash^A \varphi$.

    Finally, suppose that $\varphi$ is $\forall x\in \Godelnum{a}\,\psi(x)$. Then $\models^A_\Sigma \psi(\Godelnum{b})$
    for every $b\in a$, which gives $\ttt\vdash^A \psi(\Godelnum{b})$ for every $b\in a$. 
    Since $\{\psi(\Godelnum{b}): b\in a\}$ is a set of formulas,
    we can conclude that $\ttt\vdash^A \bigwedge_{b\in a} \psi(\Godelnum{b})$.
    From this, using infinitary first-order reasoning,
    we can conclude that \[\ttt\vdash^A \forall x (\bigvee_{b\in a}(x=\Godelnum{b})\imp \psi(x)).\]
    On the other hand, by the assumption that $\ttt$ extends $\mathsf{R}[A]$, we have that \[\ttt\vdash^A \forall x(x\in \Godelnum{a}\imp \bigvee_{b\in a} (x=\Godelnum{b})).\]
    We can conclude that
    $\ttt\vdash^A \forall x(x\in \Godelnum{a}\imp \psi(x))$ by the axioms concerning the universal
    quantifier and the classical derivations of the transitivity of implication in Hilbert-style systems,
    followed by a final application of the generalization rule.
\end{proof}

\begin{remark}
    Since $\mathsf{R}[A]$ proves all true $\Sigma[A]$-sentences and all its axioms are $\Sigma[A]$-sentences,  $\mathsf{R}[A]$ is deductively equivalent to the theory axiomatized by precisely all true $\Sigma[A]$-sentences. 
\end{remark}

Next, we want to study the behavior of the provability relation within a theory.
To do this, we need this theory to be somewhat strong.

\begin{definition}\label{def:kpa-ta}
    We let $\kp[A]$ be the $A$-theory whose axioms are the axioms of $\mathsf{R}[A]$
    and the axioms of $\kp$.

    In the following, we let $\ttt$ be a $\Sigma^A$ theory of $\langl[A]$ extending $\kp[A]$.
\end{definition}


The main point of choosing $\kp[A]$ as the ``minimal theory'' we consider is that it effortlessly
allows us to formalize, within it, the constructions of the language, of the relation of provability,
of the definition of $\LSigma[A]$ formulas, and of truth.
This is due to the fact that $\kp[A]$ is an extension of $\kp$.
We view $\kp$ as $\kp + \mathsf{Tr}(A) + \kp^A$ with $A$ interpreted as $V$,
and thus we can still apply Gandy's theorem to $\kp[A]$: we notice that no
infinitary modification of Gandy's theorem is needed, since all of the formulas defining
the classes listed above are finitary. More specifically, we can do the following:
\begin{itemize}
    \item 
    Notice that we can define a coding of $\langl[A]$ in a very natural way:
    since every formula $\varphi$ of $\langl[A]$ is an element of $A$, we can code it by the
    constant $\Godelnum{\varphi}$, and similarly for the elements of the signature (i.e., connectives, variables and constants). 
    It is clear that the class of
    codes can be defined by a (finitary) $\LSigma[A]$-formula.
    \item 
    Using the translation of \Cref{def:deriv-inf-lang} into $\langl[A]$,
    we can use Gandy's theorem to find (finitary) $\LSigma[A]$ formulas describing the
    relation $\ttt\vdash^A$. Similarly, we can find $\Sigma[A]$ formulas defining
    the class $\LSigma[A]$ and the relation $\models^A_\Sigma$ within $\kp[A]$,
    and hence within $\ttt$, on the codes of formulas.
    \item An easy fact to notice, which we have implicitly already used,
    is that $\kp[A]$ can code itself in an $\LSigma[A]$ fashion. The same holds for $\ttt$,
    by the fact that we chose this extension to be given by a $\Sigma^A$-set of formulas.
\end{itemize}


Recall that in \citep[Section~I.5]{barwise} Barwise develops a theory showing that definitional extensions of $\kp$ by $\Sigma$-definable functions and $\Delta$-definable predicates are well-behaved.
An important point is that the addition of $\Sigma$-definable functions and $\Delta$-definable predicates to the signature of $\kp$ does not change the class $\Sigma$.

We can develop an analogous construction for the case of our infinitary theories.
Namely, for each $\Sigma[A]$-formula $\varphi(\vec{x},y)$ such that \begin{equation}\label{func_symb_def_eq}\ttt\vdash^A \forall \vec{x}\exists y \varphi(\vec{x},y)\land \forall \vec{x}\forall y_1,y_2(\varphi(\vec{x},y_1)\land \varphi(\vec{x},y_2)\to y_1=y_2),\end{equation}
we expand the language $\langl[A]$ by a function symbol $f$ defined by this formula.
Analogously, for any pair of $\Sigma[A]$-formulas $\varphi(\vec{x}),\psi(\vec{x})$ such that \begin{equation}\label{pred_symb_def_eq} \ttt\vdash^A \forall \vec{x}( \varphi(\vec{x})\mathrel{\leftrightarrow} \psi(\vec{x})),\end{equation}
we expand the language $\langl[A]$ by a predicate symbol $P(\vec{x})$.
Together with the addition of the function symbol we expand the provability notion to
$\ttt\vdash^A_+ $, which covers the extended language. The inductive definition of $\ttt\vdash^+ $ extends the definition of $\ttt\vdash$ by extending all the clauses to the extended language and adding extra axioms for the new function and predicate symbols. For each function symbol $f$ as above, the extra axiom is
\[ \ttt\vdash^A_+ f(\vec{x})=y \mathrel{\leftrightarrow} \varphi(\vec{x},y).\]
The extra axiom for a given predicate symbol $P$ is
\[\ttt\vdash^A_+ P(\vec{x}) \mathrel{\leftrightarrow} \varphi(\vec{x}).\]

Using standard arguments adapted to the infinitary case, we can naturally show that this extended provability notion satisfies two important properties. First, it does not yield any extra theorems in the original $\langl[A]$. Second, we can naturally define the extended class $\Sigma^+[A]$, which differs from $\Sigma[A]$ by allowing terms with the extra function symbols in both quantifier bounds and atomic formulas. And then for each $\Sigma^+[A]$ formula $\varphi$ there is a $\Sigma[A]$-formula $\varphi'$ such that $\ttt\vdash^A_+\varphi\mathrel{\leftrightarrow}\varphi'$.

In the above we have not provided any details on how the coding of the extra function symbols is done. In fact, to work with it in a smooth way it is desirable to ensure that the signature is $\Delta^A$, which means that the class of function symbols is $\Delta^A$, the class of predicate symbols is $\Delta^A$, and the function assigning the arity to a symbol is $\Sigma^A$. If the signature is not $\Delta^A$ in this sense, then the class of formulas will not be $\Delta^A$, causing extra issues. Thus, we clarify the definition above as follows. When we add a function symbol $f$ it should in fact be treated as a member of a family of function symbols $f_{\varphi,p}$, where $p$ is a proof of (\ref{func_symb_def_eq}) (in particular, this means that for each $\varphi$ that indeed defines a function we add a proper class of extra function symbols). Similarly, $P$ should in fact be coded as $P_{\varphi,\psi,p}$, where $p$ is a proof of (\ref{pred_symb_def_eq}).  With this clarification, we can naturally define a $\Sigma^A$ function transforming a given formula $\varphi\in \Sigma^+[A]$ to an equivalent $\Sigma[A]$ formula $\varphi'$, and furthermore a $\Sigma^A$ function providing a corresponding $\ttt$-proof of the equivalence between $\varphi$ and $\varphi'$.

Thus, we can in fact freely use $\Sigma[A]$-definable functions and $\Delta[A]$-definable predicates within $\Sigma[A]$-formulas. Formally, this will of course entail the elimination of definitions when necessary. But fundamentally this matches the usual conventions used in the study of provability in arithmetic.

\begin{lemma}
  For every variable $v_0$ of $\langl[A]$, there is a $\kp[A]$-provable $\Sigma[A]$-definable binary ``substitution'' function $s_{v_0}$ with the following properties:
  For every formula $\varphi$ of $\langl[A]$ and constant $\Godelnum{a}$, we have that $\ttt\vdash^A s_{v_0}(\Godelnum{\varphi(v_0)},  a )= \Godelnum{\varphi(\Godelnum{a})}$.
\end{lemma}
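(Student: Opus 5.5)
The plan is to first define the substitution operation in the metatheory $\kp + \mathsf{Tr}(A) + \kp^A$ by an inductive definition and obtain its $\Sigma^A$-definability from Gandy's theorem (\Cref{thm:ind-def}). Then we pass to $\kp[A]$, viewed as this metatheory with $A$ interpreted as $V$, so that the same finitary $\Sigma$ formula defines a total function there. Finally, we show that $\ttt$ proves each individual true value, using \Cref{lem:truth-imp-prov}.

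\emph{Step 1: the defining relation.} Consider the ternary relation $\mathrm{Sub}(x,a,y)$, read as ``$y$ is the result of replacing every free occurrence of $v_0$ in the (code of the) term or formula $x$ by the constant $\Godelnum{a}$''. It is defined as the least relation closed under the following clauses:
\begin{itemize}
\item for atomic $x$ (built from variables and constants), $y$ is obtained by replacing $v_0$ by $\Godelnum{a}$;
\item $\mathrm{Sub}$ commutes with $\lnot$ and $\to$;
\item for $x=\bigwedge\Phi$ (resp.\ $\bigvee\Phi$), we have $y=\bigwedge\Psi$ (resp.\ $\bigvee\Psi$), where $\forall\varphi\in\Phi\,\exists\psi\in\Psi\,\mathrm{Sub}(\varphi,a,\psi)$ and $\forall\psi\in\Psi\,\exists\varphi\in\Phi\,\mathrm{Sub}(\varphi,a,\psi)$;
\item for $x=\forall v\,\varphi$ with $v\neq v_0$, the substitution passes inside; for $v=v_0$, we have $y=x$; and similarly for $\exists$.
\end{itemize}
All clauses are $\Sigma$ with $\mathrm{Sub}$ occurring positively, so Gandy's theorem yields a $\Sigma^A$ definition. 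Since constants are closed terms, no capture issue arises, so no renaming of bound variables is needed.

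\emph{Step 2: functionality and totality.} Both are proved by $\in$-induction (foundation) on $x$ ranging over $\langl[A]$. Uniqueness is straightforward, using extensionality in the infinitary clause. Existence in the $\bigwedge\Phi$ case requires producing the set $\Psi$: by the induction hypothesis, $\forall\varphi\in\Phi\,\exists!\psi\,\mathrm{Sub}(\varphi,a,\psi)$, and $\Sigma$-replacement (available in $\kp$) gives $\Psi=\{\psi:\exists\varphi\in\Phi\,\mathrm{Sub}(\varphi,a,\psi)\}$ as a set in $A$. We also need $\Psi$ to be a set of formulas, so that $\bigwedge\Psi\in\langl[A]$. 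I expect this to be the main obstacle: the use of $\Sigma$-replacement, together with keeping the definition within $\Sigma$ and checking that the output stays in the language. Run inside $\kp$ (with $A=V$), the same argument shows $\kp[A]\vdash^A \forall x\,\forall a\,\exists! y\,\mathrm{Sub}'(x,a,y)$, where $\mathrm{Sub}'$ is the relation totalised by setting $y=x$ for non-formulas $x$, which is a $\Delta$ case distinction. Hence \eqref{func_symb_def_eq} holds, and $s_{v_0}$ may be introduced as a $\Sigma[A]$-defined function symbol.

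\emph{Step 3: the instances.} Fix $\varphi(v_0)$ and $a$. By Step 2 applied externally, $\mathrm{Sub}(\Godelnum{\varphi(v_0)},a,\Godelnum{\varphi(\Godelnum a)})$ is true; being a $\Sigma[A]$ sentence, its truth in the sense of $\models^A_\Sigma$ follows by the minimality in Gandy's theorem. Alternatively, one can observe directly that the fixed point computed externally coincides with the $\Sigma$-truth of the corresponding instance. Then \Cref{lem:truth-imp-prov} gives $\ttt\vdash^A \mathrm{Sub}(\Godelnum{\varphi(v_0)},\Godelnum a,\Godelnum{\varphi(\Godelnum a)})$. Combining this with the uniqueness clause and the defining axiom $f(\vec x)=y\leftrightarrow\varphi(\vec x,y)$ yields $\ttt\vdash^A s_{v_0}(\Godelnum{\varphi(v_0)},a)=\Godelnum{\varphi(\Godelnum a)}$.
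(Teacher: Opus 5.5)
Your proposal is correct and takes the paper's approach: the paper's proof is the single remark that $s_{v_0}(x,y)$ is easily defined by recursion, with value $\emptyset$ rather than $x$ on non-formulas. Your Steps 1--3 faithfully unpack that remark, including the use of $\Sigma$-replacement for the $\bigwedge/\bigvee$ clauses and of \Cref{lem:truth-imp-prov} for the individual instances. One correction: in Step 3, the passage from the true $\Sigma^A$ instance to $\models^A_\Sigma$ of the corresponding $\langl[A]$-sentence does not come from the minimality clause of Gandy's theorem. It comes from the general equivalence $\psi^A(\vec{x})\leftrightarrow{}\models^A_\Sigma\psi(\dot{\vec{x}})$ for fixed $\Sigma$-formulas, established in the proof of \Cref{lem:nec-rule}, which is what your ``alternatively'' remark amounts to.
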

\begin{proof}
    It is easy to define by recursion a function $s_{v_0} (x,y)$ such that $s(x,y)=\emptyset$ if $x$ is not the code of any formula of $\langl[A]$, and $s(x,y)=x[v_0/\Godelnum{y}]$ otherwise.
\end{proof}

Naturally we then introduce for every finite list of variables $\vec{v}=\langle v_0,\ldots,v_{n-1}\rangle $ a ``multi-substitution'' function \[s_{\vec{v}}(x,y_0,\ldots,y_{n-1})=s_{v_0}(\dots s_{v_{n-1}}(x, y_{n-1}),\dots y_0).\]

\begin{definition}
    For every formula $\varphi$ of $\langl[A]$ and a list of free-variables $\vec{v}=\langle v_0,\ldots,v_{n-1}\rangle$  we denote by $\Godelnum{\varphi(\dot v_0,\dots, \dot v_{n-1})}$ the term \[s_{\vec{v}}(\Godelnum{\varphi(v_0,\dots,v_{n-1})},v_0,\ldots,v_{n-1}).\]
\end{definition}


Our next goal is to show that the satisfaction relation $\models^A_\Sigma$ for $\LSigma[A]$ formulas is well-behaved.

Let $\models_{\Sigma}$ be the internal version of $\models_\Sigma^A$ that we can use in extensions of $\kp[A]$. That is it is a $\Sigma$-formula (and hence $\Sigma[A]$-formula) $\models_\Sigma^V$. Namely, $\models_{\Sigma}$ is the result of the construction of $\models^A_\Sigma$ in $\kp$ with $A$ interpreted as $V$. Naturally, $\kp+\mathsf{Tr}(A)+\kp^A$ can prove that the relativization of $\models_\Sigma$ to $A$ is equivalent to $\models_\Sigma^A$. 

Analogously we introduce the relation $T\vdash$ as the internalized version of $T\vdash^A$. Note that for our $A$-theories we allow parameters from $A$ in the $\Sigma^A$ formulas defining their sets of axioms. For the internalized version of the relation we substitute this parameters $p_0,\ldots,p_{n-1}$ as the corresponding constants $\Godelnum{p_0},\ldots,\Godelnum{p_{n-1}}$. Finally, to better match with the standard notations we also denote the $\Sigma[A]$ formula $T\vdash x$ as $\prov(x)$.

\begin{lemma}\label{lem:coded-truth}
    For every $\LSigma[A]$ formula $\varphi$ whose set of free variables is finite and exhausted by a finite list $\vec{x}$,
    $\ttt\vdash^A \varphi(\vec{x}) \biimp (\models_\Sigma\Godelnum{\varphi(\dot{\vec{x}})})$.
\end{lemma}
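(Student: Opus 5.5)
We argue by induction on the construction of the $\LSigma[A]$ formula $\varphi$; this is legitimate in the metatheory because the class of $\LSigma[A]$ formulas is an inductively defined $\Sigma^A$-class (Gandy's theorem, \Cref{thm:ind-def}) and foundation is available for arbitrary formulas. For each $\varphi$ we produce a $\ttt$-derivation of both directions of $\varphi(\vec x)\biimp(\models_\Sigma\Godelnum{\varphi(\dot{\vec x})})$. The key tool is that, inside $\ttt\supseteq\kp[A]$, the internal relation $\models_\Sigma$ is the fixed point of the positive operator of \Cref{def:truth-lsigma} with $A$ read as $V$. So $\ttt$ proves the \emph{unfolding} equivalences: $\models_\Sigma\Godelnum{\bigwedge\Psi}$ iff $\forall y\in\Godelnum{\Psi}\,(\models_\Sigma y)$; $\models_\Sigma\Godelnum{\exists x\psi}$ iff $\exists a\,(\models_\Sigma s_x(\Godelnum{\psi},a))$; and similarly for the other clauses. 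The right-to-left half of each unfolding is the closure property; the left-to-right half comes from the fixed-point equation of Gandy's theorem (it suffices to use that $\models_\Sigma$ is a fixed point, not minimality).

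\textbf{Atomic case.} For $\varphi$ being $x\in y$ (or $x=y$, or negations), we must show that $\ttt$ proves $x\in y\biimp\models_\Sigma\Godelnum{\dot x\in\dot y}$. Unfolding gives $\models_\Sigma\Godelnum{\dot x\in\dot y}\biimp x\in y$ inside $\kp$, since the substitution function $s$ sends $(x,y)$ to the code of $\Godelnum{x}\in\Godelnum{y}$, and $\kp$ proves the clause ``$\Godelnum{a}\in\Godelnum{b}$ is true iff $a\in b$''. The one subtlety is that constants $\Godelnum{c}$ occurring in $\varphi$ must be internally interpreted as $c$; this is supplied by $\mathsf{R}[A]$-style facts, namely that the internal value of the code of $\Godelnum{c}$ is $\Godelnum{c}$.

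\textbf{Connective and quantifier cases.} For $\bigwedge\Psi$ and $\bigvee\Psi$, the induction hypothesis gives, for each $\psi\in\Psi$, a $\ttt$-proof of $\psi(\vec x)\biimp\models_\Sigma\Godelnum{\psi(\dot{\vec x})}$. Using the conjunction and disjunction axioms and rules, we combine these into $\bigwedge\Psi\biimp\bigwedge_{\psi\in\Psi}\models_\Sigma\Godelnum{\psi(\dot{\vec x})}$. We then need $\ttt\vdash\bigwedge_{\psi\in\Psi}\models_\Sigma\Godelnum{\psi(\dot{\vec x})}\biimp\forall y\in\Godelnum{\Psi(\dot{\vec x})}\,\models_\Sigma y$. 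This is the bridge between an external infinitary conjunction and an internal bounded quantifier, and it is handled exactly as in the proof of \Cref{lem:truth-imp-prov}. Using axiom 3 of $\mathsf{R}[A]$, every element of $\Godelnum{\Psi}$ is one of the $\Godelnum{\psi}$; $\kp$ (with $\Sigma$-replacement inside $\ttt$) proves that the set of substituted codes $\Godelnum{\Psi(\dot{\vec x})}$ is $\{s(\Godelnum{\psi},\vec x):\psi\in\Psi\}$, and this is again enumerated by a disjunction over the $\psi\in\Psi$. For bounded $\forall z\in t$ and for $\exists z$, we use the quantifier Bernays rules together with the identity $s_z(\Godelnum{\psi(\dot{\vec x},z)},z)=\Godelnum{\psi(\dot{\vec x},\dot z)}$ and the induction hypothesis applied with $z$ added to the free-variable list.

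\textbf{Main obstacles.} I expect the hardest part to be the uniformity and bookkeeping in the infinitary cases. The proofs for the $\psi\in\Psi$ must be collected into a single $\ttt$-proof, and the syntactic identities about $s$ applied to codes of infinitary formulas must be verified inside $\kp$, by $\Sigma$-recursion on formula codes. Because $\ttt\vdash^A$ is defined inductively rather than via choice of proofs (\Cref{rem:proof-system}), the induction should produce only the provability statements themselves. The combination step then appeals directly to the conjunction rule, which is a closure property of $\ttt\vdash^A$, so no choice of individual proofs is needed.
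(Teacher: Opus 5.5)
Your proposal is correct and follows essentially the paper's route. Both arguments proceed by induction on $\varphi$ and use only the fixed-point equation for $\models_\Sigma$: literals are decided at the first stage of the operator. The infinitary cases bridge the external conjunction/disjunction over $\Psi$ with the internal bounded quantifier via the axioms of $\mathsf{R}[A]$, followed by the conjunction (disjunction) rule. The quantifier cases use the Bernays rules plus the induction hypothesis for the enlarged variable list.

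The differences are cosmetic. The paper quantifies over $y\in\Godelnum{\Psi(\vec x)}$ and applies $s_{\vec x}(y,\vec x)$ inside, rather than quantifying over the set of substituted codes. For the right-to-left direction the paper also explicitly uses axiom 1 of $\mathsf{R}[A]$ (each $\Godelnum{\psi}\in\Godelnum{\Psi}$), which you should cite alongside axiom 3.
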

\begin{proof}
    The proof is by induction on the construction of the formula $\varphi$.

    Suppose that $\varphi$ is atomic. The crucial observation for this case is that,
    in the inductive definition of $\models_\Sigma \varphi$, all the literals are decided in the first step.
    Note that each literal $\varphi(\vec{x})$  is obtained from either $x_1\in x_2$,
    $\lnot x_1\in x_2$, $x_1=x_2$, or $\lnot x_1=x_2$ by perhaps renaming variables and
    substituting constants for them. And for each of these it is easy to see that,
    regardless of the choice of a finitary definable (perhaps with parameters) class $X$,
    the theory $\ttt$ can prove that for any $\vec{x}$, the element
    $\Godelnum{\varphi(\dot {\vec{x}})}$ lies in $\Gamma_{\models_\Sigma}(X)$
    iff $\varphi(\vec{x})$. Applying this to the case
    $X=\{\Godelnum{\varphi}\colon \models_\Sigma \Godelnum{\varphi}\}$,
    we conclude that $\ttt\vdash^A\forall \vec x(\varphi(\vec x)
    \biimp (\models_\Sigma\Godelnum{\varphi(\dot{\vec x})}))$ holds in this case.

    We prove the claim in the case that $\varphi(\vec{x})$ is of the form $\bigwedge \Psi(\vec{x})$ for a set
    of $\LSigma[A]$-formulas $\Psi(\vec{x})$. By fixed point condition for $\models_{\Sigma}$,
    \[\ttt\vdash^A(\models_\Sigma\Godelnum{\bigwedge\Psi(\dot{\vec{x}})} )
    \biimp \forall y\in \Godelnum{\Psi(\vec{x})} (\models_\Sigma s_{\vec{x}}(y,\vec{x}))).\]
    Thus, in this case, it suffices to show that
    \begin{equation}
    \label{coded-truth-conj-equiv}
    \ttt\vdash^A\bigwedge\Psi(\dot{\vec{x}})
    \biimp \forall y\in \Godelnum{\Psi(\vec{x})} (\models_\Sigma s_{\vec{x}}(y,\vec{x}))).
    \end{equation}

    By the induction hypothesis, $\ttt\vdash^A\psi(\vec{x})\mathrel{\leftrightarrow} (\models_\Sigma \Godelnum{\psi(\dot{\vec{x}})})$ for all $\psi(\vec{x})\in \Psi(\vec{x})$. Hence, for all $\psi(\vec{x})\in \Psi(\vec{x})$, via a finitary first-order deduction in $\ttt$, we conclude that \[\ttt\vdash^A\psi(\vec{x}) \to (y=\Godelnum{\psi(\vec{x})}\to (\models_\Sigma s_{\vec{x}}(y,\vec{x}))).\]
    Next, via a straightforward infinitary propositional derivation we get \[\ttt\vdash^A \bigwedge\Psi(\vec{x}) \to \Big(\big(\bigvee_{\psi(\vec{x})\in \Psi(\vec{x})} y=\Godelnum{\psi(\vec{x})}\big)\to (\models_\Sigma s_{\vec{x}}(y,\vec{x}))\Big ).\]
    Then via the Bernays rule for $\forall$ we get
    \[\ttt\vdash^A \bigwedge\Psi(\vec{x}) \to \forall y \Big(\big(\bigvee_{\psi(\vec{x})\in \Psi(\vec{x})} y=\Godelnum{\psi(\vec{x})}\big)\to (\models_\Sigma s_{\vec{x}}(y,\vec{x}))\Big ).\]
    Now note that axioms \ref{R_A_ax3}.\ and \ref{R_A_ax1}.\ of $\mathsf{R}[A]$ together tell us that \[\mathsf{R}[A]\vdash^A (\bigvee_{\Godelnum{b}\in \Godelnum{a}} y=\Godelnum{b}) \mathrel{\leftrightarrow} y\in \Godelnum{a}.\] Thus we can equivalently transform $\bigvee_{\psi(\vec{x})\in \Psi(\vec{x})} y=\Godelnum{\psi(\vec{x})}$ into $y\in \Godelnum{\Psi(\vec{x})}$, and thus we conclude
    \[\ttt\vdash^A \bigwedge\Psi(\vec{x}) \to \forall y\in \Godelnum{\Psi(\vec{x})} \,(\models_\Sigma s_{\vec{x}}(y,\vec{x})),\]
    which gives the left-to-right implication in the desired equivalence (\ref{coded-truth-conj-equiv}).

    To prove the right-to-left part of (\ref{coded-truth-conj-equiv}), we reason as follows. For each individual $\psi(\vec{x})\in \Psi(\vec{x})$, using axiom \ref{R_A_ax1}.\ of $\mathsf{R}[A]$ we conclude that
    \[\ttt\vdash^A \forall y\in \Godelnum{\Psi(\vec{x})} (\models_\Sigma s_{\vec{x}}(y,\vec{x}))\to \models_{\Sigma} \Godelnum{\psi(\dot \vec{x})},\]
    and next, using the induction hypothesis, we get
\[\ttt\vdash^A \forall y\in \Godelnum{\Psi(\vec{x})} (\models_\Sigma s_{\vec{x}}(y,\vec{x}))\to \psi(\vec{x}).\]
    Thus, via the rule for conjunction, we get the desired implication
    \[\ttt\vdash^A \forall y\in \Godelnum{\Psi(\vec{x})} (\models_\Sigma s_{\vec{x}}(y,\vec{x}))\to \bigwedge \Psi(\vec{x}).\]

    Consider the case where $\varphi$ has the form $\exists y\psi(y,\vec x)$.
    Showing that \[\ttt\vdash^A \forall \vec x
    ((\models_\Sigma \Godelnum{\exists y\psi(y,\dot{\vec x})})\imp \exists y\psi(y,\vec x))\] is easy,
    since $\ttt$ knows that for $\models_\Sigma \Godelnum{\exists y\psi(y,\dot{\vec x})}$ to
    hold there must be a constant $\Godelnum{a}$ such that $\models_\Sigma \Godelnum{\psi(\Godelnum{a},\dot{\vec x})}$,
    and we can conclude by the inductive hypothesis. We are thus left to show that
    \[\ttt\vdash^A \forall \vec x (\exists y\psi(y,\vec x)\imp
    (\models_\Sigma \Godelnum{\exists y\psi(y,\dot{\vec x})})).\]
    By the inductive hypothesis, we have that \[\ttt\vdash^A \forall \vec x,y (\psi(y,\vec x)\imp
    (\models_\Sigma \Godelnum{\psi(\dot y,\dot{\vec x})})).\]
    It is then sufficient to notice that
    \[\ttt\vdash^A \forall \vec x,y ((\models_\Sigma \Godelnum{\psi(\dot y,\dot{\vec x})}) \imp
    (\models_\Sigma \Godelnum{\exists y\psi(y,\dot{\vec x})})),\]
    by the way the existential is introduced within the relation $\models_\Sigma$.
    We can then conclude by logic.

     The cases of disjunction and bounded quantification can be handled by similar reasoning to the above two cases.
\end{proof}

\begin{remark}\label{rem:eq-sig-sig}
    Notice that \Cref{lem:coded-truth} has the following important consequence: over $\kp[A]$ every $\LSigma[A]$ formula is equivalent to a $\Sigma^A$ (with a parameter from $A$, namely the targeted $\LSigma[A]$-formula itself). This follows from the fact that $\models_\Sigma$ is actually a \emph{finitary} formula. This in particular entails that $\kp[A]$ proves the schemes of $\Sigma[A]$-Collection and $\Delta[A]$-Separation.
\end{remark}

An analogous argument allows us to recover the first derivability condition.
\begin{lemma}\label{lem:nec-rule}
    For any formula $\varphi$ of $\langl[A]$,
    if $\ttt\vdash^A \varphi$, then $\ttt\vdash^A \prov(\Godelnum{\varphi})$.
\end{lemma}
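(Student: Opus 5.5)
The plan is to reduce the claim to $\Sigma[A]$-completeness, as in the arithmetical case. First, since $\ttt\vdash^A\varphi$ holds in the metatheory, it should transfer to the internal $\Sigma[A]$-statement that $\prov(\Godelnum{\varphi})$ is \emph{true}, i.e.\ to $\models^A_\Sigma \prov(\Godelnum{\varphi})$. Then, because $\prov(\Godelnum{\varphi})$ is a $\LSigma[A]$ sentence and $\ttt$ extends $\mathsf{R}[A]$, \Cref{lem:truth-imp-prov} gives $\ttt\vdash^A\prov(\Godelnum{\varphi})$.

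The real work is the first step, and there are two routes. The direct route is to argue, in $\kp+\mathsf{Tr}(A)+\kp^A$, that the relativization $\prov^A$ of the internal formula $\prov$ coincides with $\ttt\vdash^A$ on $\langl[A]$. This holds by construction, since the paper notes that relativizing $\models_\Sigma$ to $A$ gives $\models^A_\Sigma$, and the same holds for the internalized provability relation. One then needs the general bridge: for any finitary $\Sigma$-formula $\psi(\vec p)$ with parameters in $A$, if $\psi^A(\vec p)$ holds then $\models^A_\Sigma \psi(\Godelnum{\vec p})$. This bridge would be proved by induction on the finitary $\Sigma$-formula $\psi$. Literals follow from the first clauses of $\models^A_\Sigma$. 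Finitary $\wedge,\vee$ and bounded $\forall$ follow from the corresponding clauses. Unbounded $\exists$ follows from the witness clause, using that the witness lies in $A$. Since $\psi$ is finitary, this is an ordinary meta-induction over finitely many subformulas.

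The alternative route avoids reasoning about relativized formulas and proceeds by induction on the inductive definition of $\ttt\vdash^A$, via the minimality clause of Gandy's theorem (\Cref{thm:ind-def}). Take $\theta(\varphi):=\ttt\vdash^A\prov(\Godelnum{\varphi})$, which is itself $\Sigma^A$. It then suffices to show that $\theta$ is closed under each clause of \Cref{def:deriv-inf-lang}. For axioms of $\ttt$, logical axioms and equality axioms, $\ttt$ proves $\prov(\Godelnum{\varphi})$ by $\Sigma[A]$-completeness applied to the true statement ``$\varphi\in\ttt$'' or ``$\varphi$ is an axiom instance'', together with the fixed-point property of $\prov$. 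Modus ponens and the Bernays rules follow by internal closure of $\prov$ under these rules, which $\ttt$ proves via the fixed-point equivalence. The infinitary conjunction rule is the delicate case. From $\ttt\vdash^A\prov(\Godelnum{\varphi\to\psi})$ for all $\psi\in\Psi$, one must derive $\ttt\vdash^A\forall y\in\Godelnum{\Psi}\,\prov(\Godelnum{\varphi}\dot\to y)$. This uses the conjunction rule together with axiom \ref{R_A_ax3} of $\mathsf{R}[A]$, exactly as in the bounded-$\forall$ case of \Cref{lem:truth-imp-prov}, and then the internal closure of $\prov$ under the conjunction rule. The disjunction rule is symmetric.

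The main obstacle I expect is the bookkeeping that $\theta$ is a legitimate $\Sigma^A$ (or at least arbitrary) formula for the minimality principle, and that the internal closure properties of $\prov$ are genuinely $\ttt$-provable. I would therefore try the first (truth-transfer) route first, since it packages all this in one induction on finitary $\Sigma$-formulas plus \Cref{lem:truth-imp-prov}, and keep the second route as a fallback.
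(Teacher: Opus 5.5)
Your primary truth-transfer route is essentially the paper's proof: it reduces via \Cref{lem:truth-imp-prov} to $\models^A_\Sigma\prov(\Godelnum{\varphi})$, and obtains this from the meta-level fact that $\psi^A(\vec{x})$ implies $\models^A_\Sigma\psi(\dot{\vec{x}})$, proved by external induction on finitary $\Sigma$-formulas by unpacking the clauses of $\models^A_\Sigma$; the paper states the full equivalence, but only this direction is needed. The Gandy-minimality fallback is unnecessary, and its base case (getting $\ttt$ to prove that $\Godelnum{\varphi}$ is an axiom instance) would already need this same bridge.
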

\begin{proof}
    Since $\mathsf{Pr}$ is a $\Sigma[A]$-formula, by \Cref{lem:truth-imp-prov}, the only thing we have to prove is that
    $\ttt\vdash\varphi$ implies $\models^A_\Sigma \prov(\Godelnum{\varphi})$.

    This implication is a particular case of the following fact about our meta-theory, whose validity we will explain next.
    For every fixed $\Sigma$-sentence $\psi(\vec{x})$, within $\kp+\mathsf{Tr}(A)+\kp^A$ we can prove the equivalence between $\psi^A(\vec{x})$ and $\models_\Sigma^A \psi(\dot{\vec{x}})$. Note that here, when we write $\models_\Sigma^A \psi(\dot{\vec{x}})$, the role of $\psi$ is different from when we just write $\psi$. Namely, $\psi(\dot{\vec{x}})$ in  $\models_\Sigma^A \psi$ stands for a natural $\Sigma$-term that maps values of $\vec{x}$ to the $\langl[A]$-formula $\psi(\Godelnum{x_0},\ldots,\Godelnum{x_{n-1}})$.

    The fact above can be proved via an external (i.e.~outside of $\kp+\mathsf{Tr}(A)+\kp^A$) induction on the construction of $\Sigma^A$ formulas. Each step of the induction simply proceeds by unpacking the fixed-point property of $\models_\Sigma^A$ (similarly to the proof of \Cref{lem:coded-truth}).
\end{proof}

This was the main step towards proving that the standard Hilbert-Bernays derivability
conditions hold for our provability predicate; we complete the proof of this fact in the
following Theorem.

\begin{theorem}\label{theo:deriv-ax}
    $\prov$ satisfies the Hilbert-Bernays-L\"ob derivability conditions, namely:
    \begin{enumerate}
        \item[HBL1] For any formula $\varphi$ of $\langl[A]$,
        if $\ttt\vdash\varphi$, then $\ttt\vdash^A \prov(\Godelnum{\varphi})$.
        \item[HBL2] For any sentences $\varphi, \psi\in\langl[A]$, we have that
        $\ttt\vdash^A \prov(\Godelnum{\varphi\imp\psi}) \imp
        (\prov(\Godelnum{\varphi}) \imp \prov(\Godelnum{\psi}))$.
        \item[HBL3] For every $\LSigma[A]$ sentence $\varphi$,
        $\ttt\vdash\varphi\imp \prov(\Godelnum{\varphi})$. In particular,
        $\ttt\vdash^A (\prov(\Godelnum{\psi})\imp
        \prov(\Godelnum{\prov(\Godelnum{\psi})}))$
        for any $\langl[A]$ sentence $\psi$.
    \end{enumerate}
\end{theorem}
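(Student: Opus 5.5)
HBL1 is exactly \Cref{lem:nec-rule}, so nothing remains to be done there. For HBL2 and HBL3 the plan is to derive both from two general facts, proved in the order below.
\begin{itemize}
\item[(a)] $\kp$ proves the closure properties of the internalized provability relation $\prov$.
\item[(b)] $\ttt$ proves its own formalized $\Sigma[A]$-completeness, i.e.\ the internal version of \Cref{lem:truth-imp-prov}, combined with \Cref{lem:coded-truth}.
\end{itemize}

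\textbf{HBL2.} Recall that $\prov$ is $T\vdash^V$, the least fixed point given by Gandy's theorem applied inside $\kp$ with $A$ interpreted as $V$. The fixed-point equivalence from \Cref{thm:ind-def} contains the modus ponens clause: whenever $T\vdash x$ and $T\vdash (x\imp y)$ hold, so does $T\vdash y$. Since $\kp\subseteq\ttt$, this gives
\[
\ttt\vdash^A\forall x,y\,(\prov(x\imp y)\wedge\prov(x)\imp\prov(y)).
\]
Here $x\imp y$ denotes the $\Sigma$-definable syntactic operation forming an implication. Instantiating with the constants $\Godelnum{\varphi}$ and $\Godelnum{\psi}$, and using $\mathsf{R}[A]$ to identify the term $\Godelnum{\varphi}\imp\Godelnum{\psi}$ with the constant $\Godelnum{\varphi\imp\psi}$, yields HBL2.

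\textbf{HBL3.} Fix a $\LSigma[A]$ sentence $\varphi$. By \Cref{lem:coded-truth} we have $\ttt\vdash^A\varphi\biimp(\models_\Sigma\Godelnum{\varphi})$, so it suffices to show
\[
\ttt\vdash^A\forall x\,\big((\models_\Sigma x)\imp\prov(x)\big).
\]
This is a finitary statement, and I would prove it inside $\kp$ (taking $A:=V$) by the minimality clause of \Cref{thm:ind-def}. The relation $\theta(x):=\prov(x)$ must be shown to be closed under the operator $\Gamma_{\models_\Sigma}$, which amounts to formalizing each case of the proof of \Cref{lem:truth-imp-prov} inside $\kp$:
\begin{itemize}
\item the literal cases come from the $\mathsf{R}[A]$-axioms and equality, which are $\Sigma$-describable as elements of $T$ since $\ttt\supseteq\mathsf{R}[A]$;
\item for $\bigwedge\Psi$, the hypothesis $\forall\psi\in\Psi\,\prov(\Godelnum{\top\imp\psi})$ gives $\prov(\Godelnum{\top\imp\bigwedge\Psi})$ by the conjunction-rule clause of the internal inductive definition, since $\Psi$ is a set;
\item the $\exists$, $\bigvee$ and bounded-$\forall$ cases mirror the external proof, the bounded case using the internal conjunction rule over the set $\{\psi(\Godelnum{b}):b\in a\}$ together with axiom \ref{R_A_ax3} of $\mathsf{R}[A]$.
\end{itemize}
The ``in particular'' part then follows because $\prov(\Godelnum{\psi})$ is itself a $\LSigma[A]$ sentence.

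\textbf{Main obstacle.} The hard step is verifying in HBL3 that the finitely many proof-building steps used externally in \Cref{lem:truth-imp-prov} exist uniformly inside $\kp$. An example is the derivation of $\forall x(x\in\Godelnum{a}\imp\psi(x))$ from $\bigwedge_{b\in a}\psi(\Godelnum{b})$. Each such step must be given by a $\Sigma$-definable function that builds derived formulas from parameters. Choice cannot be used here (\Cref{rem:proof-system}), but this is harmless: we only assert membership in the inductively defined relation $T\vdash$ and never pick out individual proofs, so $\Sigma$-replacement and closure under the inductive clauses suffice. I would first prove that the closure conditions hold for schematic instances as $\kp$-lemmas, and then chain them together.
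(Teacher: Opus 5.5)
Your proposal is correct and follows the paper's proof. HBL1 is \Cref{lem:nec-rule}; HBL2 comes from the modus ponens clause of the fixed point that $\ttt$ internally knows $\prov$ to be; and HBL3 comes from formalizing \Cref{lem:truth-imp-prov} inside $\ttt$ (reading $A$ as $V$) to obtain $(\models_\Sigma\Godelnum{\varphi})\imp\prov(\Godelnum{\varphi})$, then combining with \Cref{lem:coded-truth}. Your explicit use of the minimality clause of \Cref{thm:ind-def} with $\theta(x):=\prov(x)$ is just a more detailed version of the step the paper calls routine to formalize.
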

\begin{proof}
    HBL1 is proved in \Cref{lem:nec-rule}.

    HBL2 simply asserts that, provably in $\ttt$, the notion of provability is closed under modus ponens. Note that $\ttt$ internally knows that $\mathsf{Pr}$ is a fixed point of the inductive definition of provability. Thus, in particular $\ttt$ proves the closure under the rule of modus ponens.

    For HBL3: one easily sees that the proof of \Cref{lem:truth-imp-prov} can be formalized in $\ttt$ (as discussed earlier, this is achieved by interpreting our metatheory in $\ttt$, where we keep $\in$ without change and put $A=V$). Thus, we get that $\ttt\vdash^A (\models_\Sigma \Godelnum{\varphi}) \imp \prov(\Godelnum{\varphi})$. The conclusion then follows from \Cref{lem:coded-truth}.
\end{proof}

Intuitively speaking, the results so far would allow us to conclude that
the provability logic associated with $\ttt$ is at least $\mathsf{K4}$. But we can do better.




\begin{lemma}[Gödel's Diagonal Lemma]
    For every formula $\varphi(v)\in \langl[A]$, perhaps with extra free variables, there exists a formula $\psi$ of $\langl[A]$ such that $\ttt\vdash^A \varphi(\Godelnum{\psi}) \biimp \psi$.

    Moreover, if $\varphi$ is $\LSigma[A]$, then so is $\psi$.
\end{lemma}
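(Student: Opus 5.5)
The plan follows the classical Gödel–Carnap construction, using the substitution function $s_{v_0}$ introduced above in place of arithmetized substitution. Fix a variable $v$ not occurring bound in $\varphi$. Consider the $\Sigma[A]$-definable unary function $d(x):=s_{v}(x,\Godelnum{x})$. Here $\Godelnum{x}$ inside the term should be read via the $\Sigma$-definable map $x\mapsto$ (the code of the constant $\Godelnum{x}$). The intended meaning is that, for a formula $\chi(v)$, $d(\Godelnum{\chi(v)})$ is the code of $\chi(\Godelnum{\chi(v)})$. More precisely, by the substitution lemma we have
\[\ttt\vdash^A d(\Godelnum{\chi(v)})=\Godelnum{\chi(\Godelnum{\chi(v)})}\]
for every $\chi(v)\in\langl[A]$.

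Define $\theta(v):=\varphi(d(v))$. This is officially an $\langl[A]$-formula obtained by eliminating the defined function symbol $d$, using the conventions on $\Sigma[A]$-definable functions discussed before the substitution lemma. Concretely, $\theta(v)$ is $\exists y(\delta(v,y)\land\varphi(y))$, where $\delta$ is the $\Sigma[A]$ formula defining $d$. Put $\psi:=\theta(\Godelnum{\theta(v)})$, which lies in $\langl[A]$ because $\theta(v)\in A$ and formula formation is closed in $A$. Then $\psi$ is literally $\varphi(d(\Godelnum{\theta(v)}))$. Combining the provable identity $d(\Godelnum{\theta(v)})=\Godelnum{\theta(\Godelnum{\theta(v)})}=\Godelnum{\psi}$ with the equality axioms of Definition~\ref{def:deriv-inf-lang} gives $\ttt\vdash^A\psi\biimp\varphi(\Godelnum{\psi})$. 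Extra free variables of $\varphi$ are carried along as parameters and cause no trouble, since substitution only touches $v$.

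For the ``moreover'' part, we need $\theta$ to be $\LSigma[A]$ whenever $\varphi$ is. The elimination $\exists y(\delta(v,y)\land\varphi(y))$ is of this shape, provided $\delta$ is $\LSigma[A]$ and $\LSigma[A]$ is closed under $\land$ and $\exists$; both hold. Then $\psi$ is the substitution instance of a $\LSigma[A]$ formula by a constant, hence again $\LSigma[A]$.

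The main obstacle I expect is checking that $d$ is genuinely $\kp[A]$-provably total and $\Sigma[A]$-definable, and that the provable evaluation $d(\Godelnum{\chi})=\Godelnum{\chi(\Godelnum{\chi})}$ holds in $\ttt$ and not merely in $A$. I would obtain this from the substitution lemma together with Lemma~\ref{lem:truth-imp-prov}: the equation is a true $\LSigma[A]$ sentence, hence provable. I would also verify that replacing $d$ by its $\Sigma[A]$ definition preserves the equivalence, using functionality of $\delta$ provable in $\kp[A]$ as in (\ref{func_symb_def_eq}).
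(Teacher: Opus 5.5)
Your proposal is correct and is essentially the paper's proof. The paper also takes a self-substitution function ($f(x)=s_v(x,x)$), sets $\chi(v):=\varphi(f(v))$ and $\psi:=\chi(\Godelnum{\chi(v)})$, chains the equivalences, and reads off the $\LSigma[A]$ clause directly. Your explicit elimination of the defined symbol, and your appeal to Lemma~\ref{lem:truth-imp-prov} for the provable evaluation identity, only fill in details the paper leaves implicit. One notational point: under the paper's convention $s_v(x,y)=x[v/\Godelnum{y}]$ the self-substitution is $s_v(x,x)$, not $s_v(x,\Godelnum{x})$, which would quote one level too deep; the intended meaning you state is the right one.
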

\begin{proof}


    The proof goes as in the classical case. First we define the self-substitution function $f(x)=s_{v}(x,x)$. Clearly, for every formula $\theta(v)$, we have that
    \[
    \ttt\vdash^A s(\Godelnum{\theta(v)})=
    \Godelnum{\theta(\Godelnum{\theta(v)})}.
    \]

    Then, given our formula $\varphi$, we define the auxiliary formula $\chi(v)$ as the formula $\varphi(f(v))$. Finally, we let our required $\psi$ to be  $\chi(\Godelnum{\chi(v)})$.
    Then, one has
    $$
    \begin{aligned}
        T\vdash\psi & \biimp & \chi(\Godelnum{\chi(v)})  \\
        & \biimp &\varphi(s(\Godelnum{\chi(v)}))  \\
        & \biimp & \varphi(\Godelnum{\chi(\Godelnum{\chi(v)})}) \\
        & \biimp& \varphi(\Godelnum{\psi}).
    \end{aligned}
    $$

    The fact that $\psi$ is $\LSigma[A]$ follows immediately from the assumption that $\varphi$ is.
\end{proof}

Trivially, we can derive the following corollary of the Diagonal Lemma that will be convenient to be used later in Section \ref{sec:solovay}:
\begin{corollary}
    For every number $n$ and formula $\varphi(v,x_1,\dots, x_n)$, with at most the highlighted variables as free variables, there exists a formula $\psi(x_1,\dots,x_n)$ of $\langl[A]$ such that \[\ttt\vdash^A \varphi(\Godelnum{\psi( \dot x_1,\dots,\dot x_n)},x_1,\dots,x_n) \biimp \psi(x_1,\dots,x_n).\]

    Moreover, if $\varphi$ is $\LSigma[A]$, then so is $\psi$.
\end{corollary}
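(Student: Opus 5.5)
The plan is to reduce the parametrized statement to the Diagonal Lemma by running its proof with the extra variables $x_1,\dots,x_n$ as passive parameters. We cannot just apply the Diagonal Lemma to $\varphi$ with $x_1,\dots,x_n$ as extra free variables, because the lemma yields a formula $\psi$ with $\varphi(\Godelnum{\psi})\biimp\psi$. What we need instead is that the code plugged into $\varphi$ is the \emph{dotted} term $\Godelnum{\psi(\dot x_1,\dots,\dot x_n)}$, which depends on the values of the $x_i$. So the construction has to be redone with the multi-substitution function $s_{\vec v}$ in place of $s_v$.

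Concretely, fix a fresh variable $v$ distinct from $x_1,\dots,x_n$ and write $\vec x=\langle x_1,\dots,x_n\rangle$.

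1. Define the $\Sigma[A]$-definable function
\[
g(y,\vec x):=s_{\vec x}(s_v(y,y),x_1,\dots,x_n).
\]
It first self-substitutes the code $y$ into $v$, and then substitutes the constants for the current values of $\vec x$.

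2. Put $\chi(v,\vec x):=\varphi(g(v,\vec x),\vec x)$, using the convention established above that $\Sigma[A]$-definable functions may be used freely inside formulas.

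3. Set $\psi(\vec x):=\chi(\Godelnum{\chi(v,\vec x)},\vec x)$.

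4. Using the substitution lemma for $s_v$, check inside $\ttt$ that
\[
s_v(\Godelnum{\chi(v,\vec x)},\Godelnum{\chi(v,\vec x)})=\Godelnum{\chi(\Godelnum{\chi(v,\vec x)},\vec x)}=\Godelnum{\psi(\vec x)}.
\]
Here the $x_i$ are still free variables inside the code. Consequently
\[
\ttt\vdash^A g(\Godelnum{\chi(v,\vec x)},\vec x)=s_{\vec x}(\Godelnum{\psi(\vec x)},\vec x),
\]
and the right-hand side is by definition the term $\Godelnum{\psi(\dot x_1,\dots,\dot x_n)}$.

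5. Unfold as in the Diagonal Lemma:
\[
\psi(\vec x)\biimp\chi(\Godelnum{\chi(v,\vec x)},\vec x)\biimp\varphi(g(\Godelnum{\chi(v,\vec x)},\vec x),\vec x)\biimp\varphi(\Godelnum{\psi(\dot x_1,\dots,\dot x_n)},\vec x),
\]
provably in $\ttt$, for arbitrary $\vec x$.

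The main obstacle is step 4. We need the identity $s_v(\Godelnum{\theta},a)=\Godelnum{\theta[v/\Godelnum a]}$ for formulas $\theta$ that still have other free variables, and we need it in a form that stays valid after the outer $s_{\vec x}$ substitution. This requires the (provable) fact that substitutions for distinct variables commute, and that substituting a constant for $v$ does not interfere with the later substitutions for the $x_i$. I would handle this by proving, by the recursive definition of the $s$'s inside $\kp[A]$, the general equality $\ttt\vdash^A s_{\vec x}(s_v(\Godelnum{\theta},y),\vec x)=s_{v,\vec x}(\Godelnum\theta,y,\vec x)$. I would then conclude step 4 by equality reasoning.

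Finally, the claim about complexity: if $\varphi$ is $\LSigma[A]$, then so is $\chi$, because eliminating the $\Sigma[A]$-definable function $g$ keeps us inside $\Sigma[A]$, as noted for $\Sigma^+[A]$. Substituting a constant for $v$ then preserves this, so $\psi$ is $\LSigma[A]$.
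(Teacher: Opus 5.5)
Your argument is correct and is essentially the derivation the paper has in mind when it calls this a trivial corollary of the Diagonal Lemma. Your $\chi$ is exactly what that lemma's proof produces when applied, with $\vec x$ as extra free variables, to $\varphi(s_{\vec x}(v,\vec x),\vec x)$, so you could also have invoked the lemma as a black box instead of rerunning it. The commutation-of-substitutions fact you single out as the main obstacle is not needed: $s_v(\Godelnum{\chi(v,\vec x)},\Godelnum{\chi(v,\vec x)})=\Godelnum{\psi(\vec x)}$ is a closed true $\LSigma[A]$ equation, so your step 4 follows by pure equality reasoning, as you in fact already wrote it.
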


\begin{remark}
    For subtheories of arithmetic, it is well known that a version of Gödel's Diagonal
    Lemma can be proved already in $\mathsf{Q}$ (and in fact in $\mathsf{R}$); it is therefore natural to expect that something
    similar should happen here. In the present paper we do not try to optimize the assumptions on the theory needed to prove
    the Lemma. We point out that we do not see how to immediately adapt the proof that we give for $\kp[A]$ to the case of, for instance, the theory $\mathsf{R}[A]$. The main issue is the representability of the
    self-substitution function in weaker theories. In the arithmetical case it is a standard fact that every
    recursive function $f\colon \mathbb{N}\to \mathbb{N}$ is representable in $\mathsf{Q}$, and this general fact is then applied to the self-substitution function in particular. But in our case, it is unclear whether the analogue of this can be achieved for arbitrary admissible sets $A$. And in the proof for the case of $\kp[A]$ we use the fact that the self-substitution function is provably total in $\kp[A]$. Thus, it seems implausible that a generalization of the classical technique from the arithmetical case would work for weak theories (analogous to $\mathsf{Q}$ in our setting).

    We add, however, that, based on unpublished work by the second author about a different approach to proving the Diagonal Lemma in the case of finite languages, it seems likely that
    $\mathsf{R}[A]$ can indeed prove a version of the Diagonal Lemma (with a rather different argument).
\end{remark}

The question about theories representing all $\Sigma^A$ functions mentioned in the remark above can be made precise as follows.
\begin{question}
Is it true that for every admissible set $A$ there is a $\Sigma^A$-theory $T_0$ such that for every total $\Sigma^A$-function $f(x)$ on $A$ there is $G_f(x,y)\in\langl[A]$ for which \[T_0\vdash^A G_f(\Godelnum{a},\Godelnum{f(a)})\;\;\;\text{and}\;\;\; T_0\vdash^A  G_f(\Godelnum{a},x)\to x=\Godelnum{f(a)}\text{, for every }a\in A?\]
\end{question}

Gödel's Diagonal Lemma allows us to prove the following in a manner identical to the arithmetical case:

\begin{corollary}\label{cor:lobs-ax}
The theory $\ttt$ satisfies Löb's Theorem:
\[\ttt\vdash^A \mathsf{Pr}(\Godelnum{\varphi})\to\varphi \mathrel{\Rightarrow} \ttt\vdash^A \varphi,\text{ for each sentence }\varphi\in\langl[A].\]
Furthermore  $\ttt$ satisfies the formalized Löb's Theorem:
\[\ttt\vdash^A \mathsf{Pr}(\Godelnum{\mathsf{Pr}(\Godelnum{\varphi})\to\varphi})\to \mathsf{Pr}(\Godelnum{\varphi}),\text{ for each sentence }\varphi\in\langl[A].\]
\end{corollary}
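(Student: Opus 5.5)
The plan is the standard derivation of Löb's Theorem from the Diagonal Lemma and the derivability conditions HBL1--HBL3 of \Cref{theo:deriv-ax}. We carry it out in $\ttt$, and separately in $\ttt$ extended by the hypothesis, to get the formalized version.

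\textbf{First statement.} Fix a sentence $\varphi\in\langl[A]$ with $\ttt\vdash^A \prov(\Godelnum{\varphi})\to\varphi$. Apply the Diagonal Lemma to the formula $\prov(v)\to\varphi$, which has only $v$ free and lies in $\langl[A]$. This gives a sentence $\psi$ with $\ttt\vdash^A \psi\biimp(\prov(\Godelnum{\psi})\to\varphi)$. The steps are then as follows.
\begin{enumerate}
\item From the left-to-right direction, HBL1 yields $\ttt\vdash^A\prov(\Godelnum{\psi\to(\prov(\Godelnum{\psi})\to\varphi)})$.
\item Two applications of HBL2 give $\ttt\vdash^A\prov(\Godelnum{\psi})\to(\prov(\Godelnum{\prov(\Godelnum{\psi})})\to\prov(\Godelnum{\varphi}))$.
\item HBL3 applied to the $\LSigma[A]$ sentence $\prov(\Godelnum{\psi})$ gives $\ttt\vdash^A\prov(\Godelnum{\psi})\to\prov(\Godelnum{\prov(\Godelnum{\psi})})$. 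Propositional reasoning in the Hilbert calculus (axioms 1--6 and modus ponens) then yields $\ttt\vdash^A\prov(\Godelnum{\psi})\to\prov(\Godelnum{\varphi})$.
\item Combining this with the hypothesis gives $\ttt\vdash^A\prov(\Godelnum{\psi})\to\varphi$. By the diagonal equivalence, $\ttt\vdash^A\psi$.
\item By HBL1, $\ttt\vdash^A\prov(\Godelnum{\psi})$, and hence $\ttt\vdash^A\varphi$.
\end{enumerate}
One point needs care: HBL2 must be applied to the nested implication so that the internal modus ponens is used twice. This is exactly what HBL2 provides, since $\ttt$ knows $\prov$ is closed under modus ponens.

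\textbf{Formalized version.} Write $\theta:=\prov(\Godelnum{\prov(\Godelnum{\varphi})\to\varphi})$ and use the same $\psi$. The goal is $\ttt\vdash^A\theta\to\prov(\Godelnum{\varphi})$. Steps 1--3 above are hypothesis-free, so $\ttt\vdash^A\prov(\Godelnum{\psi})\to\prov(\Godelnum{\varphi})$ holds unconditionally. The argument then continues as follows.
\begin{enumerate}
\item Chaining implications propositionally, $\ttt\vdash^A(\prov(\Godelnum{\varphi})\to\varphi)\to(\prov(\Godelnum{\psi})\to\varphi)$.
\item Using the diagonal equivalence, $\ttt\vdash^A(\prov(\Godelnum{\varphi})\to\varphi)\to\psi$.
\item Apply HBL1 and then HBL2 to get $\ttt\vdash^A\theta\to\prov(\Godelnum{\psi})$.
\item Composing with $\prov(\Godelnum{\psi})\to\prov(\Godelnum{\varphi})$ gives $\ttt\vdash^A\theta\to\prov(\Godelnum{\varphi})$.
\end{enumerate}
This avoids having to derive the Deduction Theorem for $\vdash^A$ relative to added hypotheses.

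\textbf{Expected obstacle.} The only possible subtlety is that HBL2 and HBL3 are stated for sentences. So I must check that $\psi$, $\prov(\Godelnum{\psi})$ and all the implications involved are sentences of $\langl[A]$. This holds because $\varphi$ is a sentence and the diagonal construction substitutes a constant for $v$. I must also check that $\prov(\Godelnum{\psi})$ is $\LSigma[A]$, which holds because $\prov$ was defined as a $\Sigma[A]$ formula. Everything else is finitary propositional bookkeeping, identical to the arithmetical case.
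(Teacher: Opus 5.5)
Your proposal is correct and follows the paper's approach. The paper gives no separate argument; it says Löb's Theorem follows from the Diagonal Lemma and HBL1--HBL3 ``in a manner identical to the arithmetical case'', following Boolos. That is exactly the derivation you give, including the standard diagonal-sentence proof of the formalized version.
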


The results we have obtained so far are enough to show that the provability logic associated
with $\ttt$ is at least infinitary $\mathsf{GL}$ without Barcan's formula. Next, we show that $\ttt$ is also strong enough to
prove Barcan's formula.

\begin{lemma}\label{lem:barcan}
    For every set $\Phi\in A$ of $\langl[A]$ formulas, we have that
    $\ttt\vdash^A \bigwedge_{\varphi\in\Phi} \prov(\Godelnum{\varphi})
    \imp \prov(\Godelnum{\bigwedge_{\varphi\in\Phi} \varphi})$.
\end{lemma}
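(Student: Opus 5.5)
The plan is to formalize inside $\ttt$ the closure of provability under the conjunction rule. This is what makes Barcan's formula hold, just as HBL2 was obtained from closure under modus ponens. The argument has three steps.

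\textbf{Step 1 (the internal rule).} Since $\ttt$ extends $\kp[A]$, it internally knows that $\prov$ is a fixed point of the inductive definition of provability (Gandy's theorem applied with $A$ interpreted as $V$). In particular $\ttt$ proves the internalized conjunction rule:
\[\ttt\vdash^A \forall \Psi\,\forall \chi\,\bigl(\forall \psi\in \Psi\, \prov(\chi\imp\psi)\imp \prov(\chi\imp\textstyle\bigwedge\Psi)\bigr),\]
where $\Psi$ ranges over codes of sets of formulas and $\chi$ over codes of formulas. Instantiating $\chi:=\top$ and using the internalized axiom $\top$ together with modus ponens inside $\prov$, $\ttt$ also proves
\[\forall\psi\in\Psi\,\prov(\psi)\imp\prov\bigl(\textstyle\bigwedge\Psi\bigr).\]
Here one step needs care: passing from $\prov(\psi)$ to $\prov(\top\imp\psi)$ requires the internalized axiom $\psi\imp(\top\imp\psi)$ and closure under modus ponens. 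Both are available internally, exactly as in the proof of HBL2.

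\textbf{Step 2 (instantiation).} Apply this with $\Psi:=\Godelnum{\Phi}$. We obtain
\[\ttt\vdash^A \forall y\in\Godelnum{\Phi}\,\prov(y)\imp \prov\bigl(\Godelnum{\textstyle\bigwedge_{\varphi\in\Phi}\varphi}\bigr),\]
using that the operation sending a set of codes to the code of their conjunction is a $\Sigma[A]$-definable function that $\ttt$ evaluates correctly on the constant $\Godelnum{\Phi}$. This evaluation uses the same kind of reasoning as for the substitution function.

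\textbf{Step 3 (conversion to the external conjunction).} It remains to show
\[\ttt\vdash^A \textstyle\bigwedge_{\varphi\in\Phi}\prov(\Godelnum{\varphi})\imp \forall y\in\Godelnum{\Phi}\,\prov(y).\]
This follows the left-to-right half of the conjunction case in \Cref{lem:coded-truth}, and I expect it to be the main, though routine, obstacle. For each $\varphi\in\Phi$, first-order equality reasoning gives
\[\ttt\vdash^A\prov(\Godelnum{\varphi})\imp(y=\Godelnum{\varphi}\imp\prov(y)).\]
Infinitary propositional reasoning then gives
\[\textstyle\bigwedge_{\varphi\in\Phi}\prov(\Godelnum\varphi)\imp\bigl(\bigvee_{\varphi\in\Phi}y=\Godelnum\varphi\imp\prov(y)\bigr).\]
Applying the $\forall$-Bernays rule and then axioms \ref{R_A_ax1}.\ and \ref{R_A_ax3}.\ of $\mathsf{R}[A]$, we replace $\bigvee_{\varphi\in\Phi}y=\Godelnum\varphi$ by $y\in\Godelnum{\Phi}$. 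Note that $\Phi\in A$, so all the infinitary conjunctions and disjunctions used here are legitimate $\langl[A]$-formulas.

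Chaining Steps 2 and 3 by transitivity of implication gives the lemma.
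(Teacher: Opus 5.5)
Your proposal is correct and takes essentially the same route as the paper. The paper also uses the fact that $\ttt$ knows $\prov$ is a fixed point of the inductive definition of provability to reduce $\prov(\Godelnum{\bigwedge_{\varphi\in\Phi}\varphi})$ to $\forall x\in\Godelnum{\Phi}\,\prov(x)$, and then derives $\bigwedge_{\varphi\in\Phi}\prov(\Godelnum{\varphi})\imp\forall x\in\Godelnum{\Phi}\,\prov(x)$ from the axioms of $\mathsf{R}[A]$; you prove only the direction actually needed (via the internal conjunction rule with antecedent $\top$) and spell out the $\mathsf{R}[A]$ step along the lines of \Cref{lem:coded-truth}, which the paper leaves implicit.
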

\begin{proof} Since, $\ttt$ knows that $\mathsf{Pr}$ is a fixed point of the inductive definiton of provability,
    $\ttt$ proves that $\prov(\Godelnum{\bigwedge_{\varphi\in\Phi} \varphi})$ is equivalent to $\forall x\in \Godelnum{\Phi}(\prov(x))$. To finish it thus suffices to show that $\ttt\vdash^A \bigwedge_{\varphi\in\Phi}
    \prov(\Godelnum{\varphi}) \imp \forall x\in \Godelnum{\Phi}(\prov(x))$, which we clearly can do using axioms of $\mathsf{R}[A]$.
\end{proof}

We are finally ready to present the desired lower bound for
the provability logic of theories $\ttt$ as in \Cref{def:kpa-ta}.

\begin{definition}
    Let us fix an element $\Box$ (say, in $\mathbb{HF}$) different from $\bigwedge, \bigvee, \imp$ and
    $\neg$, and let $P$ be a $\Delta^A$ class disjoint from $\{\Box,\bigwedge,\bigvee,\imp,\neg\}$
    (the elements of $P$ should be thought of as the propositional variables). We define the
    \emph{modal language} $\langl[A]^{\Box}$ as the smallest $\Sigma^A$ class containing every
    element of $P$ and closed under applications of $\Box$, $\to$, $\neg$ to elements of
    $\langl[A]^{\Box}$ and under application of $\bigvee$ and $\bigwedge$ to sets of
    elements of $\langl[A]^{\Box}$.
\end{definition}

Next we introduce the logic $\hgl[A]$ that is the natural counterpart of $\HGL{\omega_1}$ for the case of the modal language $\langl[A]^\Box$.
\begin{definition}
    We define the relation $\hgl[A]\vdash$ over the class $\langl[A]^{\Box}$
    as the smallest class such that:
    \begin{enumerate}
        \item $\hgl[A]\vdash \textit{S}$ holds for each instance $\textit{S}$ of one of the axioms  $1$ to $6$ of $\HCLA$, where formulas range over the language $\langl[A]^{\Box}$;
        \item $\hgl[A]\vdash \textit{S}$ holds for each instance $\textit{S}$ of one of the axioms \ref{GL_ax_first}.--\ref{GL_ax_last}. of $\HGL{\omega_1}$, where formulas range over the language $\langl[A]^{\Box}$;
  \item if $\hgl[A]\vdash B$ and $\hgl[A]\vdash B \imp C$, then $\hgl[A]\vdash C$
  (modus ponens);
  \item for every set of $\langl[A]^\Box$-formulas $X\in A$ and $B\in X$, we have the axiom $\bigwedge X\to B$ (conjunction axioms);
  \item if $\hgl[A]\vdash B\imp C$ for every $C$ in a set of $\langl[A]^{\Box}$ formulas
  $X\in A$, then $\hgl[A]\vdash B\imp\bigwedge X$ (conjunction rule);
  \item for every set of $\langl[A]^{\Box}$-formulas $X\in A$ and $B\in X$, we have the axiom $B\to\bigvee X$ (disjunction axioms);
  \item if $\hgl[A]\vdash B\imp C$ for every $B$ in a set of $\langl[A]^{\Box}$ formulas
  $X\in A$, then $\hgl[A]\vdash \bigvee X \imp C$ (disjunction rule);
  \item if $\hgl[A]\vdash B$, then  $\hgl[A]\vdash \Box B$ (necessitation)
    \end{enumerate}
\end{definition}

\begin{remark}
    The relation $\hgl[A]\vdash$ on modal formulas is obviously very similar to the system
    $\HGL {\omega_1}$ used in the previous sections. We cannot,
    however, simply copy-paste its definition in the present setting:
    the same considerations expressed in \Cref{rem:proof-system} apply here as well.
    Namely, the $\Sigma^A$ relation defined above does not, in general, correspond neatly
    to the standard notion of provability given by proof trees. We need a more relaxed
    notion of proof in this case as well.
\end{remark}

\begin{definition}
    An \emph{interpretation} $(\cdot)^*$ of $\langl[A]^{\Box}$ into $\langl[A]$
    is a class function from $\langl[A]^{\Box}$ to the sentences of $\langl[A]$ such that
    \begin{itemize}
        \item $(\cdot)^*$ commutes with the finitary and infinitary Boolean connectives (e.g., if $B$ is of the form
        $\neg C$, then $B^*$ is $\neg (C^*)$);
        \item if $B$ is of the form $\Box C$, then $B^*$ is $\prov ( \Godelnum{C^*})$.
    \end{itemize}
\end{definition}

\begin{theorem}\label{theo:low-bound-kp}
    Let $\ttt$ be a $\Sigma^A$ theory of $\langl[A]$ extending $\kp[A]$.
    Then, for every interpretation $(\cdot)^*$ of $\langl[A]^{\Box}$ into $\langl[A]$
    and every formula $B$ in $\langl[A]^{\Box}$, we have that if $\hgl[A]\vdash B$,
    then $\ttt\vdash^A  B^*$.
\end{theorem}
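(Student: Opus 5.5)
We argue by induction along the inductive definition of $\hgl[A]\vdash$. More precisely, we use the minimality clause of Gandy's theorem (\Cref{thm:ind-def}), working in the metatheory $\kp+\mathsf{Tr}(A)+\kp^A$. Fix the interpretation $(\cdot)^*$; it should be a $\Sigma^A$ class function, which we may assume since it is defined by recursion on $\langl[A]^\Box$ from its values on $P$. Consider the class
\[\theta(B):=\; B\in\langl[A]^\Box \land \ttt\vdash^A B^*.\]
It suffices to check that $\theta$ is closed under every clause defining $\hgl[A]\vdash$. Minimality then gives $\hgl[A]\vdash B\Rightarrow\theta(B)$. The fact that $\theta$ is not a $\Sigma^A$ formula is harmless, since the minimality clause in \Cref{thm:ind-def} holds for arbitrary $\theta$. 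Note also that $(\cdot)^*$ maps sets $X\in A$ of modal formulas to sets $X^*\in A$ by $\Sigma$-replacement. This is needed so that the infinitary connectives of $B^*$ are applied to members of $A$, making $B^*$ genuinely an $\langl[A]$-sentence.

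The clauses are checked one by one.
\begin{enumerate}
\item Axioms 1--6 of $\HCLA$, together with the conjunction/disjunction axioms and rules, and modus ponens. Since $(\cdot)^*$ commutes with the Boolean connectives, the starred instances are instances of the same axioms and rules of $\ttt\vdash^A$ from \Cref{def:deriv-inf-lang}. For the infinitary rules we use that the premise hypothesis gives $\ttt\vdash^A B^*\to C^*$ for all $C\in X$, which is exactly the premise of the $\ttt$-rule for $\Psi=X^*$.
\item Necessitation follows from HBL1 in \Cref{theo:deriv-ax}, i.e.\ \Cref{lem:nec-rule}.
\item The K axiom $\Box(B\to C)\to(\Box B\to\Box C)$ is HBL2, since $(B\to C)^*=B^*\to C^*$.
\item Transitivity is the second part of HBL3.
\item L\"ob's axiom is the formalized L\"ob theorem of \Cref{cor:lobs-ax}, applied to $\varphi=B^*$.
\item Barcan's formula $\bigwedge\Box X\to\Box\bigwedge X$ translates to $\bigwedge_{\varphi\in X^*}\prov(\Godelnum{\varphi})\to\prov(\Godelnum{\bigwedge X^*})$, which is \Cref{lem:barcan} with $\Phi=X^*\in A$.
\end{enumerate}

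The main obstacle is bookkeeping at the level of codes. First, $\prov(\Godelnum{C^*})$ must literally be the translation of $\Box C$; this holds by the definition of an interpretation. Second, the sentences produced by HBL2/HBL3 and \Cref{lem:barcan} must match syntactically, e.g.\ $\Godelnum{(B\to C)^*}$ versus $\Godelnum{B^*\to C^*}$, and $\Godelnum{\bigwedge X^*}$. These coincide because $(\cdot)^*$ commutes with the connectives.

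A second subtlety concerns the infinitary rules. Here we must argue inside the metatheory, via the closure of $\theta$ under the rule, without choosing individual proofs; this is the issue flagged in \Cref{rem:proof-system}. Phrasing the whole induction as the minimality principle of \Cref{thm:ind-def} avoids it, because $\ttt\vdash^A$ is itself a fixed point closed under the conjunction and disjunction rules for sets in $A$.
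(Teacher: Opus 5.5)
Your proposal is correct and follows the paper's own argument. Both apply the minimality of the inductive definition of $\hgl[A]\vdash$ to the class $\{B \mid \ttt\vdash^A B^*\}$, transfer the non-modal clauses directly, and discharge necessitation, K, 4, L\"ob and Barcan via HBL1--HBL3 of \Cref{theo:deriv-ax}, \Cref{cor:lobs-ax} and \Cref{lem:barcan}.

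One minor point: you do not need to assume $(\cdot)^*$ is $\Sigma^A$ in order to get $X^*\in A$. This is already forced by the requirement that $(\bigwedge X)^*=\bigwedge X^*$ be an $\langl[A]$-sentence.
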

\begin{proof}[Sketch of proof]
    The proof is in essence the same as the one for the case of arithmetic,
    as given, for instance, in Theorem 2 of Chapter 3 of \cite{logic-prov-boolos}.

    More explicitly, we fix an interpretation $(\cdot)^*$ and then proceed by induction on the inductive definition of $\hgl[A]\vdash$ to show that $\hgl[A]\vdash B$ entails $\ttt\vdash^A  B^*$. Namely, by the minimality of $\hgl[A]\vdash$, it is sufficient to prove that the class $O_*=\{ B\mid \ttt\vdash^A  B^*\}$ contains all the axioms of $\hgl[A]$ and is closed under all the inference rules of the logic.

    Non-modal axioms and rules are shared between the two calculi, and hence all $\langl[A]^\Box$-instances of those axioms will be in $O_*$ and $O_*$ will be closed under those rules. The fact that $\ttt$ satisfies HBL1 corresponds to the closure of $O_*$ under the necessitation rule. HBL2 and HBL3 entail that $O_*$ contains all the $\langl[A]^\Box$-instances of axioms $K$ and $4$, respectively. As we already mentioned in \Cref{cor:lobs-ax}, the formalized L\"ob's Theorem holds for $\ttt$, which means that all $\langl[A]^\Box$-instances of L\"ob's axiom will be in $O_*$. And finally, as we proved in \Cref{lem:barcan}, all $\langl[A]^\Box$-instances of Barcan's formula are in $O_*$.
\end{proof}



\section{An analogue of Solovay's completeness theorem}  \label{sec:solovay}



The goal of this section is to show that, under some special circumstances, the system
$\dgla$ is complete with respect to the semantics provided by the interpretations $(\cdot)^*$
defined above (though we do not know whether $\dgla$ is sound for this interpretation). Namely, we will show that for a sentence $B$ in $\langl[A]^{\Box}$,
if $\dgla\not\vdash B$, then we can find an interpretation $(\cdot)^*$ of
$\langl[A]^{\Box}$ into $\langl[A]$ such that $\ttt\not\vdash^A  B^*$.

We will follow a two-step argument to achieve this.
\begin{enumerate}
    \item We first try to adapt to our setting the arithmetical completeness theorem for $\mathsf{GL}$,
    due to Solovay. This gives us that, under some assumptions on $A$ and $\ttt$,
    if we can find a Kripke model $\mathcal{K}\in A$ such that $\mathcal{K}\not\models B$,
    then we can also find an interpretation $(\cdot)^*$ such that $\ttt\not\vdash^A  B^*$.
    This is done in \Cref{theo:km-compl}. As we will see, what we need for the proof to
    go through is that, on the one side, $A$ is a set, and on the other, that
    $\ttt$ proves the existence of an $\LSigma[A]$ surjection from
    the ordinals onto the universe. Intuitively, this is due to the fact that, in our provability
    predicate $\exists x\proov(x,\Godelnum{\varphi})$, the witness $x$ can be any set,
    which means that in our setting the class of proofs is not automatically indexed by ordinals,
    unlike the case of $\mathsf{PA}$, where everything is coded by natural numbers. The latter is important
    since an examination of Solovay's proof shows that it relies on the fact that the theory under
    consideration is aware of the presence of a well-ordering of proofs that is furthermore suitably
    aligned with the formalized provability predicate.
    \item The previous result allows us, in principle, to use \Cref{dgla-completeness}
    to recover the desired completeness result. We do this in \Cref{theo:compl-di-syst},
    under some further conditions on $A$ and $\ttt$ (and on the metatheory in which we are working):
    we require that $A=L_\alpha$ for
    a countable stable ordinal $\alpha$, and that $\ttt\vdash^A  V=L$. Intuitively,
    the first condition allows us to ``import'' the countermodel $\mathcal{K}$ built in
    \Cref{dgla-completeness} into $A$, and the second condition guarantees that $\ttt$
    proves the existence of a surjection as needed for \Cref{theo:km-compl}.
\end{enumerate}

We can then view the combination of \Cref{theo:low-bound-kp} and \Cref{theo:compl-di-syst} as
giving lower and upper bounds for the provability logic of some special $A$ and $\ttt$.
We remark, however, that these bounds need not coincide, as far as we know.

\bigbreak




Before moving to the proof of the main results of this section, notice that we still
have some setup to do; indeed, technically speaking, we still do not know what a Kripke model is in our setting, and similarly, what it means for a Kripke
model to satisfy a formula in the modal language $\langl[A]^{\Box}$.

In what follows, it will be important that Kripke models be elements of our admissible $A$. Thus, we put some restriction
on the relation $\EV$ and require that it come equipped with a set $S$ specifying its support, i.e., the set of
propositional variables on which it is defined.

\begin{definition}
    For every formula $B$ in the modal language $\langl[A]^{\Box}$, we define the \emph{support} of $B$ to be the set $S\subseteq P$ of propositional variables that appear in $B$ (the fact that the support is indeed a set follows immediately from the fact that $B$ is a set).

    We call a \emph{Kripke model} a $4$-tuple $\kmodel$ with the following properties:
    \begin{itemize}
    \item $\mathcal{K}\in A$.
    \item $\sqsubset$ is a binary transitive relation on $W$.
    \item $S$ is a subset of the class $P$ of propositional variables.
    \item $\EV$ is a subset of $W\times S$.
    \end{itemize}

    Given a Kripke model $\kmodel$, a node $w\in W$, and a propositional variable $p\in S$,
    we say that $\mathcal{K}, w\models p$ if ${\langle w,p\rangle}\in {\EV}$, and $\mathcal{K}, w\not\models p$ otherwise. Given a node $w$, we can extend the notions of satisfaction and non-satisfaction
    $\mathcal{K},w\models B$ and $\mathcal{K},w\not\models B$ to all formulas $B$ in $\langl[A]^{\Box}$ with support contained in $S$,
    following the standard definition of satisfaction given in \cref{sec-gla-kripke}.

    Finally, for $\mathcal{K}$ and $B$ as above, we say that $\mathcal{K}\EV B$ if
    $\mathcal{K}, w\EV B$ holds for all $w\in W$.
\end{definition}
Notice that, in the definition above, given a Kripke model $\kmodel$, the relations $\mathcal{K}\EV B$ and $\mathcal{K}\not\EV B$ are restricted to formulas $B$ whose support is contained in $S$.

As usual, we will say that $\kmodel$ is conversely well-founded if $(W,\sqsubset)$ is.

Notice that, thanks to the assumption that $\mathcal{K}\in A$, the relation $\mathcal{K}\models B$ is $\Delta_1$ with parameters from $A$.


\begin{definition}
Let $(W,\sqsubset)$ be a relational structure. A \emph{co-ranking function} for the relation $\sqsubset$ on $W$ is a function $f$ from $W$ to the ordinals such that for all $u \in W$:
$$f(u) = \sup_{v\sqsupset u}(f(v)+1)$$
\end{definition}

The next lemma will allow us to import an ``external'' co-rank on $W'$ into $A$. But in order to do this, we will have to put the first limitation on the admissible classes we consider: namely, from here on, $A$ is assumed to be an admissible \emph{set}.

\begin{lemma}\label{lem:global-coranking}
Suppose $A$ is a set. If $(W,\sqsubset)\in A$ is a conversely well-founded binary relation, then there exists a global co-ranking function for $W$ within $A$.
\end{lemma}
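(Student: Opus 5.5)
The plan is to define the co-ranking function by $\Sigma$-recursion along the converse of $\sqsubset$. Since the recursion takes place inside $A$, the resulting function will be an element of $A$. Concretely, I would work in $\kp+\mathsf{Tr}(A)+\kp^A$ with $A$ a set, fix $(W,\sqsubset)\in A$, and consider the relation $\sqsupset$ on $W$. Converse well-foundedness of $\sqsubset$ means $\sqsupset$ is well-founded. The issue is that this well-foundedness is a fact about $V$, the metatheory. Recursion inside $A$ requires only that $A$ satisfy enough foundation, and this is where it matters that $A$ is transitive and satisfies $\kp$, including the full foundation scheme.

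First, I would check that $A$ itself sees $(W,\sqsupset)$ as well-founded in the sense required by $\kp$'s recursion theorem. Since $A$ is transitive, any subset $X\subseteq W$ with $X\in A$ is a genuine subset, so it has a $\sqsupset$-minimal element by well-foundedness in $V$. Because $\Delta_0$ facts are absolute for transitive $A$, that element is minimal in the sense of $A$ too. More robustly, I would use the standard fact (Barwise, I.6) that in $\kp$ one can define by $\Sigma$-recursion on a well-founded relation that is a set. Rather than rely on $A$ proving well-foundedness, I would run the recursion in the metatheory and then verify that the result lies in $A$.

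The core step is to define, by $\Sigma^A$-recursion (Gandy's Theorem~\ref{thm:ind-def} or the $\Sigma$-recursion theorem in $\kp^A$), the partial function $F$ given by $F(u)=\sup_{v\sqsupset u}(F(v)+1)$. Here $F(u)$ is defined whenever $F(v)$ is defined for all $v\sqsupset u$. The defined part of $F$ is a $\Sigma^A$ class. I would show by induction along $\sqsupset$ in the metatheory, which is legitimate because foundation extends to the expanded language, that $F(u)$ is defined for every $u\in W$. For the inductive step, suppose all $v\sqsupset u$ have $F(v)$ defined, each value an ordinal in $A$. Then $\Sigma$-collection in $A$, applied to the set $\{v: v\sqsupset u\}\in A$, yields a set in $A$ containing these values. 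Taking its union, which is $\Delta_0$, gives $F(u)\in A$.

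Finally, I would apply $\Sigma$-replacement in $A$ to the domain $W\in A$ to conclude that $F\restriction W=\{\langle u,F(u)\rangle:u\in W\}$ is an element of $A$. This is the required global co-ranking function, and it satisfies the defining equation by construction. The main obstacle is the totality step. The induction has to be carried out externally on the genuine well-foundedness of $\sqsupset$, while each step must be justified by collection inside $A$. This is exactly where the hypothesis that $A$ is a set, together with the foundation scheme of the metatheory for formulas mentioning $A$, is used.
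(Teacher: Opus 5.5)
Your architecture matches the paper's. You approximate the co-rank inside $A$ and show by induction along $\sqsupset$ that every $u\in W$ gets a value, using $\Sigma$-collection in $A$ at the inductive step. You then assemble a global co-ranking function that is an element of $A$. The paper works directly with partial co-ranks $f\in A$, shows any two agree on their common domain, and glues them. Your least fixed point from Theorem~\ref{thm:ind-def} is a repackaging of the same idea. You still need the fixed point to be single-valued. This follows from the minimality clause applied to $\theta(u,\alpha):=$ ``$\mathsf{Fix}(u,\alpha)$ and $\alpha$ is the only such value''. You need it anyway: collection only yields a set \emph{containing} the values $F(v)$, and taking the supremum requires $\Sigma$-replacement.

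The genuine gap is in the totality step, which is the crux of the lemma. You justify induction along $\sqsupset$ ``because foundation extends to the expanded language''. But that scheme is $\in$-induction. It does not give induction along a set-sized relation that is only conversely well-founded in the sense that every nonempty \emph{subset} of $W$ has a $\sqsubset$-maximal element. You say the hypothesis that $A$ is a set is used here, but the mechanism you cite does not depend on it. Without that hypothesis the lemma is false. Take $L_{\omega_1^{ck}}$ with $A=V$, which satisfies $\mathsf{KP}+\mathsf{Tr}(A)+\mathsf{KP}^A$ with full foundation. Let $\sqsubset$ be the reverse of the Harrison ordering $<_H$ on $\omega$. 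Every nonempty hyperarithmetic set has a $<_H$-least element, since otherwise there would be a hyperarithmetic descending sequence. So $(\omega,\sqsubset)$ is conversely well-founded there, yet no co-ranking function exists. For the same reason, the ``standard fact'' about $\Sigma$-recursion on arbitrary set-sized well-founded relations is not available in $\mathsf{KP}$. It would give rank functions for all well-founded relations, i.e.\ Axiom $\beta$, which the paper assumes only later.

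The missing step is exactly how the paper uses the hypothesis:
\begin{itemize}
\item Since $A$ is a set, the $\Sigma^A$ formula ``$F(u)$ is defined'' is $\Delta_0$ in the parameter $A$. Equivalently, this is ``$u$ lies in the domain of some partial co-rank $f\in A$''.
\item So $\Delta_0$-separation in the metatheory produces the set $\{u\in W: F(u)\text{ is undefined}\}$.
\item If this set is nonempty, converse well-foundedness gives a $\sqsubset$-maximal element of it.
\item Your collection argument at that element then yields a contradiction.
\end{itemize}
With this replacement your proof goes through.
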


\begin{proof}
We say that a function $f$ from a subset of $W$ to the ordinals is a \emph{partial co-rank} if its domain is $\sqsubset$-upward-closed (i.e., if $u,v\in W$, $u\in \dom(f)$ and $u\sqsubset v$, then $v\in \dom(f)$), and $\forall u\in \dom(f) (f(u) = \sup_{v\sqsupset u}(f(v)+1))$.

First, observe that any two partial co-ranks have the same values on the intersection of their domains: if not, then we use the converse well-foundedness of $\sqsubset$ to find a maximal point of divergence, and trivially obtain a contradiction.

Next, we prove by contradiction that each $u\in W$ is in the domain of some partial co-rank $f\in A$.

Suppose for a contradiction that this does not hold. We want to consider the set of all $u\in W$ that are not in the domain of any partial co-rank, and consider a minimal element of this set, as given by the well-foundedness of $W$. In any case, the definition of this set would require $\Sigma^A$-separation in general, which can be performed since $A$ is a set, which makes $\Sigma^A$-formulas $\Delta_0$-formulas with $A$ as a parameter.

Since all successors of $u$ are in the domain of some partial co-rank, using $\Sigma^A$-collection in $A$ we find a set $F\in A$ of partial co-ranks such that each successor of $u$ is in the domain of some $f\in F$. Note that, since all partial co-ranks agree with each other,  $f_u=\bigcup F$ is a partial co-rank. We extend $f_u$ to $f_u^+$ by setting $f_u^+(u)=\sup_{v\sqsupset u}f_u(v)+1$. Since $f_u^+$ is clearly a partial co-rank having $u$ in its domain, we obtain a contradiction.

Finally, we use $\Sigma^A$-collection on $A$ to find a set $F\in A$ of partial co-ranks such that for each $u\in W$ there is $f\in F$ with $u\in \dom(f)$. Clearly, $\bigcup F\in A$ is a partial co-rank having all of $W$ as its domain, i.e., a global co-ranking function for $W$.
\end{proof}

In what follows, we will need the generalization of $\Sigma$-soundness to theories in the language $\langl[A]$.
\begin{definition}
    Let $\ttt$ be an $A$-theory. We say that $\ttt$ is \emph{$\LSigma[A]$-sound} if, for every $\LSigma[A]$ sentence $\varphi$, $\ttt\vdash^A  \varphi$ implies $\models_\Sigma^A \varphi$.
\end{definition}

\begin{theorem}\label{theo:km-compl}
    Suppose $A$ is a set. Let $\ttt$ be a $\Sigma[A]$-sound $A$-theory extending $\kp[A]$ such that there is a $\ttt$-provable $\Sigma[A]$-definable function $e(x)$ for which $\ttt$ proves that its restriction to the ordinals is a surjection onto $V$. Let $B$ be a formula in the
    modal language $\langl[A]^{\Box}$ such that there is a conversely well-founded Kripke model
    $\mathcal{K}\in A$ with $\mathcal{K}\not\models B$. Then there is also an interpretation
    $(\cdot)^*$ such that $\ttt\not\vdash^A  B^*$.
\end{theorem}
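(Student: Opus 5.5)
Our plan is to adapt Solovay's construction, replacing the $\omega$-indexed search through proofs by a search indexed by ordinals and organized through the surjection $e$. First we normalise the countermodel. Let $\mathcal{K}=(W,\sqsubset,\EV,S)\in A$ be conversely well-founded with $\mathcal{K},w_1\not\models B$. We form $\mathcal{K}'$ by adding a fresh root $0$ below every world of $W$. The forcing of atoms at $0$ is arbitrary, and truth at the old worlds is unchanged. By \Cref{lem:global-coranking} there is a co-ranking function $\rho\in A$ for $\mathcal{K}'$; we fix it, because it is what lets $\ttt$ prove the well-behavedness facts about $\mathcal{K}'$ that finite frames give for free in the classical argument.

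\textbf{The Solovay function.} Via the Diagonal Lemma (in the form of its Corollary with parameters) we define a $\Sigma[A]$ formula $h(\alpha)=w$ describing a function from the ordinals to $W\cup\{0\}$. We set $h(0)=0$. At stage $\alpha$, if $\bigcup_{\beta<\alpha}$ has already stabilised at some world $w$ and $e(\alpha)$ is a $\ttt$-proof of $\Godelnum{\lnot\ell_{u}}$ for some $u\sqsupset w$, we move to $u$. Here $\ell_u:=\exists\alpha\, (h(\alpha)=u)$ is the sentence ``the limit is $u$''; more precisely, we mean that the function eventually stays at $u$. At limits we take the eventual value. This value exists because each move goes strictly up in $\sqsubset$, so $\rho$ strictly decreases and only finitely many moves occur. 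Justifying this inside $\ttt$ requires $\ttt$ to know $\mathcal{K}'$, which it does since $\mathcal{K}'\in A$, and to prove that $\rho$ is a co-rank, which follows from $\mathsf{R}[A]$ via $\Sigma[A]$-completeness. The recursion on ordinals is legitimate in $\kp[A]$ by $\Sigma$-recursion, and the surjectivity of $e$ on ordinals guarantees that every proof is eventually inspected.

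\textbf{Solovay's lemmas.} We then verify the usual facts, internally in $\ttt$: (a) $\ttt\vdash\bigvee_{w}\ell_w$ and the $\ell_w$ are pairwise exclusive. (b) $\ttt\vdash \ell_w\to\prov(\Godelnum{\bigvee_{u\sqsupset w}\ell_u})$ for $w\neq 0$; this uses HBL3 applied to the $\Sigma[A]$ fact that the function has reached $w$. (c) $\ttt\vdash \ell_w\to\lnot\prov(\Godelnum{\lnot\ell_u})$ for $u\sqsupset w$. Note that $\bigvee_{u\sqsupset w}$ is a set-indexed disjunction in $A$, so we need infinitary connectives here. We set $p^*:=\bigvee\{\ell_w: w\in W,\ w\EV p\}$. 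By induction on $C\in\sub B$ we then show that $\mathcal{K},w\models C$ implies $\ttt\vdash \ell_w\to C^*$, and that $\mathcal{K},w\not\models C$ implies $\ttt\vdash\ell_w\to\lnot C^*$. The Boolean and infinitary cases use the conjunction and disjunction rules. The Barcan-type steps in the $\Box$ case use (b), (c) and \Cref{lem:barcan}. In particular $\ttt\vdash \ell_{w_1}\to\lnot B^*$.

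\textbf{Conclusion.} If $\ttt\vdash B^*$, then $\ttt\vdash\lnot\ell_{w_1}$, so by HBL1 $\ttt\vdash\prov(\Godelnum{\lnot\ell_{w_1}})$. Since $\ttt$ is $\Sigma[A]$-sound, in reality some ordinal $\alpha$ with $e(\alpha)$ a proof of $\lnot\ell_{w_1}$ exists. Using soundness again, the real limit of $h$ is $0$, because $\ttt$ proves that the limit is not above $0$ unless refuted. At the root $0$ the function then moves to $w_1$, or to another world. The standard argument now applies: the real limit $w$ satisfies $\ell_w$, which is a true $\Sigma[A]$ sentence. If $w\neq 0$, we get $\ttt\vdash\lnot\ell_w$ from the definition of a move, together with $\Sigma$-soundness. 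This is a contradiction.

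\textbf{Main obstacle.} The step we expect to be hardest is the internal verification of (a)–(c). It needs two ingredients: converse well-foundedness of $\mathcal{K}'$, provable in $\ttt$ via $\rho$ so that the limit exists, and the alignment between the ordinal order of $e$-enumerated proofs and $\prov$. That alignment is exactly why the hypothesis on $e$ is needed.
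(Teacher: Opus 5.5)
Your construction is the paper's: an ordinal-indexed Solovay function from the Diagonal Lemma, a fresh root, the co-ranking function of \Cref{lem:global-coranking} made $\ttt$-provable by $\Sigma[A]$-completeness so that $h$ stabilizes, limit sentences $\ell_u$ in the role of the paper's $S_u$, and the same auxiliary lemmas and truth lemma. The gap is in your concluding soundness argument, and it comes from the complexity of $\ell_w$. You cannot have $\ell_w$ mean both things at once. The sentence $\exists\alpha\,(h(\alpha)=w)$ is $\Sigma[A]$, but it does not say that $w$ is the limit, and (a) and (c) fail for it. The sentence you actually use, $\exists\gamma\,\forall\alpha>\gamma\,(h(\alpha)=w)$, has an unbounded universal quantifier over the ordinals and is in general not $\Sigma[A]$. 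So the claim that the real limit $w$ satisfies $\ell_w$ as ``a true $\Sigma[A]$ sentence'' is false. Then $\ttt\vdash\lnot\ell_w$ together with the truth of $\ell_w$ contradicts nothing, because $\Sigma[A]$-soundness says nothing about $\lnot\ell_w$, which has the form $\forall\gamma\,\exists\alpha>\gamma\,(h(\alpha)\neq w)$. (Likewise, ``the real limit is $0$ because $\ttt$ proves that the limit is not above $0$ unless refuted'' is not an argument.)

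The paper only ever applies soundness to genuine $\Sigma[A]$ sentences. Suppose the real limit is $w\neq 0$, first reached at stage $\alpha_0$. Then $h(\alpha_0)=w$ is a true $\Sigma[A]$ sentence and hence $\ttt$-provable. Also $\ttt$ proves that $h(\alpha_0)=w$ implies $\ell_w\vee\bigvee_{u\sqsupset w}\ell_u$, and the move into $w$ gives $\ttt\vdash\lnot\ell_w$. Therefore $\ttt\vdash\exists\beta\,(w\sqsubset h(\beta))$. This is $\Sigma[A]$, hence really true, which contradicts $w$ being the real limit. For the root, the paper uses that $\lnot S_r$ is $\ttt$-equivalent to the $\Sigma[A]$ sentence $\exists\gamma\,(h(\gamma)\neq r)$. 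Before this you also need the paper's check that the external $h^\circ$, defined through $\models^A_\Sigma$, is a total, single-valued, weakly increasing function; this again uses only $\Sigma[A]$-soundness and $\Sigma[A]$-completeness. With these repairs your endgame works: $h^\circ$ must stay at the root, yet the real witness of $\lnot\ell_{w_1}$ forces a move to $w_1$.

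A smaller point: a single $x$ typically satisfies $\proov(x,\cdot)$ for many sentences. So ``$e(\alpha)$ is a proof of $\lnot\ell_u$ for some $u\sqsupset w$'' does not yet define $h(\alpha)$, and how the target is chosen matters for (c). The paper reads $e(\alpha)$ as a pair $\langle v,p\rangle$ to fix the target. It also takes $e$ to enumerate every set cofinally often, so that in (c) the relevant pair is inspected after $h$ has settled.
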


\begin{proof}
We start the proof with two technical observations.

The first is that, since the existence of the co-ranking function $\mathsf{rk}$ for $(W,\sqsubset)$ is a $\Sigma$-sentence that is true in $A$, it is also provable in $\ttt$. The existence of $\mathsf{rk}$ has important corollaries. We say that $f\colon \alpha\to W$ is a $\sqsubseteq$-\emph{weakly ascending} function if $\alpha\in\mathsf{On}$ and $\forall \beta,\gamma\in \alpha(\beta<\gamma\to f(\beta)\sqsubseteq f(\gamma))$. Since $\mathsf{rk}\circ f$ is an anti-monotone function from an ordinal to the class of ordinals, it must attain its least value at some $\beta\in \alpha$. Notice that $f$ must stabilize after this $\beta$, with $f(\gamma)\sqsubseteq f(\beta)$ for any $\gamma\in \alpha$. The same holds for weakly ascending class functions $f\colon \mathsf{Ord}\to W$. This reasoning about weakly ascending functions can also be formalized inside $\ttt$. In particular, we will be interested in the version of the theorem for $\ttt$-provable $\Sigma[A]$-definable functions in the case of class functions.

The second observation is that, as one can easily show,
if there exists a surjection $e$ as in the hypothesis, then, provably in $\ttt$,
there is also one that enumerates every element
of $A$ cofinally often; we will assume to be working with such an $e$.

The rest of the proof follows the classical proof of Solovay's theorem.
We are going to define an interpretation $(\cdot)^*$ such that $\ttt\not\vdash^A  B^*$.


Without loss of generality we may assume that the model $\mathcal{K}$ has a root $t$ and $\mathcal{K},t\not\models  B$: if not, then pick $t\in W$ such that $\mathcal{K},t\not\models  B$ and replace $\mathcal{K}$ with its restriction to the upper cone of $t$. We let $\mathcal{K}'=(W',\sqsubset,\EV,S)$ be $\mathcal{K}$ extended with a new root $r$.
Let $t$ be the root of $\mathcal{K}$; we extend the relation $\EV$ to $r$ by saying that,
for every $p\in S$,
$r\EV p$ holds if and only if $t \EV p$ does.
Notice that we are technically abusing notation by using the same symbols $\sqsubset$ and
$\models$ for the models $\mathcal{K}$ and $\mathcal{K}'$, but this should not cause any confusion,
since the relations coincide on common points of the domain $W$. We also naturally extend the co-ranking function $\mathsf{rk}$ to the root $r$.

The idea is, similarly to the arithmetical case, to define a function $h:\mathsf{Ord} \to W'$
that moves along $W'$ to a new point $w$ every time it finds a proof that the limit of $h$ is not $w$.
Namely, we claim that there exists a $\ttt$-provable $\Sigma[A]$-definable function $h$ from the ordinals into $W'$ such that $h(0)=r$ and for $\alpha>0$:
\begin{equation}
\label{h_def}
 h(\alpha)=
\begin{cases}
    v  & \text { if } \sup_{\sqsubset} \{h(\beta): \beta<\alpha\} \sqsubset v \text { and }\\
       & e(\alpha) \text{ is a pair } \langle v,p \rangle \text{ such that } p \text{ is a witness of}\\
       & \prov(\Godelnum{\forall \gamma\exists \beta>\gamma (h(\beta)\neq v)}) \\
    \sup_{\sqsubset} \{h(\beta): \beta<\alpha\} & \text{ otherwise.}
\end{cases}
\end{equation}



Let us consider the following $\LSigma[A]$ formula, which we define using the Diagonal Lemma. To keep the displayed formula readable, we write $\nu(x)$ for the $\LSigma[A]$-formula
\[\nu(x) \;:\biimp\; \forall \gamma\,\exists \beta>\gamma\,\exists v'\,( G_h(\beta, v')\land v'\neq x),\]
which expresses that $x$ is not the limit of $h$ (compare~(\ref{h_def})). With this abbreviation, the formula reads
\[
\begin{aligned}
 G_h(\alpha,x) :\biimp{} &  (\alpha \not\in \mathsf{Ord} \imp x= r ) \wedge (\alpha=0\imp x=r) \wedge {}\\
   & (\alpha>0\to \exists h_{app} \, (\text{Func}(h_{app}) \wedge \dom(h_{app}) = \alpha \wedge {}\\
   &\quad \forall \beta <\alpha \,\models_\Sigma \Godelnum{G_h(\Godelnum{\beta}, \Godelnum{h_{app} (\beta)})} \wedge {}\\
   &\quad \exists s\in W' \, ( \forall \beta<\alpha \,(h_{app}(\beta)\sqsubseteq s) \wedge \exists \beta<\alpha \,(h_{app}(\beta)=s) \wedge {}\\
   &\qquad (s\sqsubset x \wedge \proov(\pi_1(e(\alpha)), \Godelnum{\nu(\dot x)} ) \wedge \pi_0(e(\alpha))= x ) \vee {}\\
   &\qquad ( s= x \wedge ( \neg (s \sqsubset \pi_0(e(\alpha))) \vee \neg \proov(\pi_1(e(\alpha)), \Godelnum{\nu(\dot x)} ) ) ) ) ))
\end{aligned}
\]

We claim that this formula describes the graph of the function $h$ we are after.
We start by showing that, provably in $\ttt$, $G_h$, when restricted to the ordinals, describes a function to $W'$.
\begin{claim}
    $\ttt$ proves that, for every ordinal $\alpha$, the restriction of $G_h$ to ordinals smaller than $\alpha$ is the graph of a $\sqsubseteq$-weakly ascending function to $W'$.
\end{claim}
\begin{proof}
    We reason in $\ttt$ and prove the claim by transfinite induction on $\alpha$.

    If $\alpha=0$ or $\alpha=1$, the claim follows from the first line of the equivalence describing $G_h$.

    Suppose $\alpha>1$ and the claim holds for all $\beta<\alpha$.
    Notice that if $\alpha$ is a limit, then the claim follows immediately from the inductive assumption. We can thus suppose that $\alpha$ is a successor, say $\alpha=\alpha'+1$. What we have to show is that there exists $x\in W'$ such that $G_h(\alpha',x)$ holds, that there is no $x'\ne x$ for which $G_h(\alpha',x')$ holds, and that $x$ is $\sqsubseteq$-above every $y$ for which $G_h(\beta,y)$ holds for some $\beta<\alpha'$.

    For the sake of readability, we will denote by $h\rst_{\alpha'}$ the function $h:\alpha'\to W'$ given by our inductive assumption. The existence of this function follows from $\Sigma$-Reflection. This $h\rst_{\alpha'}$ will serve as the witness for $h_{app}$ in $G_h(\alpha,x)$.

    We start by showing that $\sup_\sqsubset\{ v\in W': \exists \beta<\alpha'(h\rst_{\alpha'}(\beta)=v)\}$ exists, and is actually a maximum. This maximum will be used as the witness for $s$ in $G_h(\alpha,x)$.

    As we mentioned at the beginning of the proof of the theorem, the presence of the co-ranking function $\mathsf{rk}\colon W'\to\mathsf{Ord}$ and the fact that $h\rst_{\alpha'}$ is $\sqsubseteq$-weakly ascending guarantee that $h\rst_{\alpha'}$ attains the maximum value $m$ from some ordinal $\beta<\alpha'$ onwards.

    Next, we show that we can find an $x$ such that $G_h(\alpha',x)$ holds. First, notice that if we set $h_{app}=h\rst_{\alpha'}$ and $s=m$ in the fixed-point definition of $G_h$, from the inductive assumption and the previous argument we obtain that $\text{Func}(h_{app})$, $ \dom(h_{app}) = \alpha'$, $\forall \beta<\alpha' (h_{app}(\beta)\sqsubseteq s)$ and $\exists \beta<\alpha' (h_{app}(\beta)=s)$ all hold. Moreover, since we know (in $\ttt$) that $G_h(\beta,h_{app}(\beta))$ holds, by \Cref{lem:coded-truth} we can conclude that, for every $\beta<\alpha'$,
    $\models_\Sigma \Godelnum{G_h(\Godelnum{\beta}, \Godelnum{h_{app} (\beta)})}$, and conclude that $\forall \beta <\alpha \models_\Sigma \Godelnum{G_h(\Godelnum{\beta}, \Godelnum{h_{app} (\beta)})}$ by $\Sigma$-Collection.

    We then examine $e(\alpha)$: if $s\sqsubset\pi_0(e(\alpha'))$ and the formula \[\proov(\pi_1(e(\alpha')), \Godelnum{\forall \gamma\exists \beta>\gamma \exists v'( G_h(\beta, v')\land \lnot v'=\Godelnum{\pi_0(e(\alpha'))})} )\]  holds, then we can set $x=\pi_0(e(\alpha'))$. Otherwise, we let $x=s$.

    Next, we prove that the $x$ we selected is unique. Indeed, for every $x'\in W'$, if $G_h(\alpha',x')$ holds, then this is witnessed by some $h_{app}'$ and some $s'$. Hence, $\models_\Sigma \Godelnum{G_h(\Godelnum{\beta}, \Godelnum{h_{app} (\beta)})}$ and $\models_\Sigma \Godelnum{G_h(\Godelnum{\beta}, \Godelnum{h_{app}' (\beta)})}$ both hold, which implies that $h_{app}'= h_{app}$. This in turn easily implies that $s'=s$, and thus that $x'=x$.

    Since the unique $x$ we chose is weakly above $m$, we are done.
\end{proof}

Thanks to the previous claim, we can conclude that $G_h$ defines a $\sqsubseteq$-weakly increasing function from the ordinals into $W'$. We denote by $h$ the function defined by $G_h$. We can now verify by straightforward induction on $\alpha$ that $h$ satisfies the recursive definition (\ref{h_def}). We will henceforth write $h(\alpha)=u$ instead of the more cumbersome $G_h(\alpha,u)$.

Since $\ttt$ is $\Sigma[A]$-sound, we can also make sense of $h$ as a $\Sigma^A$-definable function on $A$ from the perspective of the metatheory. Namely, we define $h^\circ\colon \mathsf{Ord}\cap A \to W'$ by \[h^\circ(\alpha)=v\iff \models_\Sigma^A G_h(\Godelnum{\alpha},\Godelnum{v}).\]
To see that this definition indeed yields a well-defined function, we first observe that for each $\alpha$ there should be at least one $v$ such that $h^\circ(\alpha)=v$, since $\ttt$ is $\Sigma[A]$-sound and proves $\exists v\in W' (h(\Godelnum{\alpha})=v)$. Second, we observe that for each given $\alpha$ there can be at most one $v$ for which $h^\circ(\alpha)=v$, since otherwise, by $\Sigma[A]$-completeness, those two different $v$ would be values of $h(\alpha)$ from the perspective of $\ttt$, leading to a contradiction in $\ttt$, which cannot exist by the $\Sigma[A]$-soundness of $\ttt$.

Since now we know that $\ttt$ proves that $h$ is weakly increasing function to $\mathcal{K}'$ that has co-ranking function, as mentioned in the beginning of the proof of the theorem, we can conclude that $\ttt$ proves that $h$ stabilizes from some point.

For every $u\in W'$ we define the $\langl[A]$ sentence $S_u$ as
$\exists \gamma\forall \alpha(\gamma<\alpha \imp G_h(\alpha, u ))$, which intuitively says that the limit of $h$ is $u$. As in the case
of arithmetic, we define an interpretation $(\cdot)^*$ of $\langl[A]^{\Box}$ into $\langl[A]$ by making $(\cdot)^*$ commute with the Boolean connectives, by setting $(\Box C)^* := \prov (\Godelnum{C^*})$,
and, for every propositional variable $p\in S$,
$p^*:= \bigvee_{j\in \{ i\in W': \mathcal{K}',i\models p \}}S_j$. For all the other propositional variables we set $p^*:= \bigwedge \emptyset$.

Notice that the interpretation $(\cdot)^*$ can be defined inside of $A$ as a $\Sigma^A$-function, since $\mathcal{K}'\in A$. In particular this entails that all the values of $(\cdot)^*$ are elements of $\langl[A]$.

The proof of the fact that $\ttt\not\vdash^A  B^*$ goes through much as in
\cite[Chapter 9]{logic-prov-boolos}. We give an overview of the argument here.

\begin{enumerate}
    \item First of all, 
    since we know that $\ttt$ proves $h$ to be $\sqsubseteq$-weakly increasing and also that it has a limit,
    we can conclude that, for every $u\in W'$,
    $\ttt\vdash^A  \forall \alpha ( h(\alpha)=u\imp (S_u\vee\bigvee_{u\sqsubset v}S_v))$.
    \item Next, we want to show that, for every $u,v\in W'$ with $u\sqsubset v$,
    we have $\ttt\vdash^A  S_u\imp \neg \prov (\Godelnum{\neg S_v})$.
    To do this, we argue within $\ttt$ as follows. If $S_u$ holds, there is an ordinal $\gamma$ such that for every $\beta\geq\gamma$, $h(\beta)=u$ holds.
    Here we use our assumption that, provably in $\ttt$, the surjection $e$ enumerates every element of $A$ cofinally
    often: if there were a $c$ such that
    $\proov(c,\Godelnum{\neg S_v})$ holds for some $v$ with $u\sqsubset v$,
    then there would also be an $\alpha>\gamma$ such that $\pi_0(e(\alpha))=v$ and $\proov(\pi_1(e(\alpha)),\Godelnum{\neg S_v})$ holds;
    but then, by the defining equation of $h$, we would have $h(\alpha+1)=v\neq u$, contradicting $S_u$.
    \item Moreover, if $u\in W'$ with $u\neq r$, then it is easy to see that
    $\ttt\vdash^A  S_u\imp \prov(\Godelnum{\neg S_u})$. Namely, reasoning in $\ttt$ under the assumption $S_u$ we see that there exist the first $\alpha$ such that $h(\alpha)=u$, which in particular means that $\pi_1(e(\alpha))$ is a proof of (an equivalent presentation of) the fact that $v$ is not the limit. 
    \item We claim that, for all $u\in W'$ with $u\neq r$,
    $\ttt\vdash^A  S_u\imp \prov ( \Godelnum{\bigvee_{u\sqsubset v} S_v})$.
    We have already observed that $\ttt\vdash^A  \forall \alpha ( h(\alpha)=u\imp
    (S_u\vee\bigvee_{u\sqsubset v}S_v))$, which implies that
    $\ttt\vdash^A  \prov(\Godelnum{\exists \alpha (h(\alpha)=u)})\imp
    \prov(\Godelnum{S_u\vee\bigvee_{u\sqsubset v}S_v})$ for every 
    $u\in W'$. By the fact that $h$ is $\LSigma[A]$, we also have that
    $\ttt\vdash^A  \exists \alpha (h(\alpha)=u) \imp \prov(\Godelnum{\exists \alpha (h(\alpha)=u)})$,
    from which we deduce that $\ttt\vdash^A  S_u\imp \prov(\Godelnum{\exists \alpha (h(\alpha)=u)})$,
    and thus $\ttt\vdash^A  S_u \imp \prov(\Godelnum{S_u\vee\bigvee_{u\sqsubset v}S_v})$.
    Using now our assumption that $u\neq r$, we can deduce the claim, since we have proved
    that $\ttt\vdash^A  S_u\imp \prov(\Godelnum{\neg S_u})$.
    \item This allows us to prove the fundamental result relating the interpretation $(\cdot)^*$ and the
    model $W'$: namely, for every subsentence $C$ of $B$ and every $u\neq r$,
    if $\mathcal{K}',u\models C$, then $\ttt\vdash^A  S_u\imp C^*$,
    and if $\mathcal{K}',u\not\models C$, then $\ttt\vdash^A  S_u\imp \neg (C^*)$.

    This can be shown by induction on the complexity of the subformula $C$.
    In essence, the only interesting case is when $C$ is $\Box D$.
    Suppose first that $\mathcal{K}',u\models \Box D$.
    This means that for all $v\in W'$ with $u\sqsubset v$, we have $\mathcal{K}',v\models D$,
    and hence $\ttt\vdash^A  S_v\imp D^*$ by the inductive assumption. Therefore, we have
    $\ttt\vdash^A  \bigvee_{u\sqsubset v} S_v \imp D^*$, which entails
    $\ttt\vdash^A  \prov(\Godelnum{\bigvee_{u\sqsubset v} S_v})
    \imp \prov(\Godelnum{D^*})$, and thus, by the previous item, we conclude that
    $\ttt\vdash^A  S_u\imp \prov(\Godelnum{D^*})$.

    If instead $\mathcal{K}',u\not\models \Box D$, then for some $v\in W'$ with
    $u\sqsubset v$ we have $\mathcal{K}',v\not\models D$, hence $\ttt\vdash^A  S_v\imp\neg (D^*) $,
    which entails $\ttt\vdash^A  \neg\prov(\Godelnum{\neg S_v})\imp \neg
    \prov(\Godelnum{D^*}) $. We can then conclude, since we have shown that
    $\ttt\vdash^A  S_u\imp \neg \prov (\Godelnum{\neg S_v})$.
    \item Recall that $t$ is the immediate successor of $r$ in $W'$ (i.e., the root of $W$).
    From the previous result, we obtain that $\ttt\vdash^A  S_t\imp \neg (B^*)$.
    By some algebraic manipulations and the previous results, we get that
    $\ttt\vdash^A  S_r\imp \neg \prov(\Godelnum{B^*})$.
    \item Suppose for a contradiction that $\ttt\vdash^A  B^*$; then
    $\ttt\vdash^A  \prov(\Godelnum{B^*})$, and so $\ttt\vdash^A  \neg S_r$ by the previous item.
    Now notice that $\neg S_r$ can be expressed as a $\Sigma[A]$ formula: indeed, since $h$ is $\sqsubseteq$-weakly increasing and $h(0)=r$, $\neg S_r$ simply says that there exists $\gamma$ such that $h(\gamma)\neq r$.
    Hence, by the $\Sigma[A]$-soundness of $\ttt$, it follows that $\models_\Sigma^A \neg S_r$.
    But if $\models_\Sigma^A \neg S_r$ holds, then for some value $u\neq r$ and some ordinal $\alpha$ we have that $\models_\Sigma^A h(\alpha)=u $.

    We now argue in our metatheory.
    Notice that the statement that $h^\circ$ is not $\sqsubseteq$-weakly increasing is $\Sigma^A$, so if it were true, it would be $\ttt$-provable, which would make $\ttt$ inconsistent. From the fact that $W'$ has a co-ranking function, we can conclude that $h^\circ$ indeed has a limit. Let $v\in W'$ be this limit, and say that for every $\beta\ge \gamma$ we have $h(\beta)=v$, where $\gamma$ is the least ordinal with this property. By the considerations above, it follows that $v\sqsupseteq u \neq r$.
    But by the defining equality of $h$, since $h^\circ$ jumps to $v$ at the ordinal $\gamma$, there is a $\ttt$-proof that $v$ is not the limit of $h$. Thus, since $\ttt$ proves that $h(\gamma)=v$ and that $v$ is not the limit of $h$, it also proves that there is $\delta>\gamma$ for which $h(\delta)\sqsupset v$. The latter is a $\Sigma[A]$-statement, and hence by the $\Sigma[A]$-soundness of $\ttt$ we conclude that there is $\delta>\gamma$ such that $h^\circ(\delta)\sqsupset v$, a contradiction.
\end{enumerate}
\end{proof}


\begin{remark}
    It is known that, over $\mathsf{NBG}$, the existence of a surjection
    from the ordinals onto the universe is equivalent to the Axiom of Global Choice.
    This makes it easy to find examples of $A$ that do not satisfy the assumptions of the Theorem:
    it is sufficient, for instance, to take as $A$ any model of $\mathsf{ZF}$ without choice to
    see that no such surjection can exist, which precludes any $\ttt$ from proving its existence.
    For more on the Axiom of Global Choice, we refer to \cite{found-set-theopfraenkel-bar-hillel-levy}.

   The assumption of the existence of a $\Sigma$-definable surjection from the ordinals onto $V$ in $A$-theories $\ttt$ is of course satisfied for those $A$ that are admissible $L_\alpha$ and $\ttt$ extending $\mathsf{KP}[A]+L=V$. In fact, the assumption of the existence of a $\Sigma[A]$-definable surjection from the $A$-ordinals onto $A$, for an admissible set $A$, is rather close to the assumption that $A$ is a level of $L$. Namely, suppose that $A$ is an admissible set satisfying the axioms that every set is at most countable and that every set is contained in an admissible set. Then, using Shoenfield's Absoluteness Theorem, it is easy to prove that the existence of a $\Sigma[A]$-definable surjection from the $A$-ordinals onto $A$ is equivalent to the fact that $A=L_\alpha[x]$ for some $x\subseteq \omega$.
\end{remark}

\begin{remark}
    We note that the reason why we need to assume the existence of a $\Sigma[A]$ surjection from the ordinals onto $V$ appears to be similar in nature to the unresolved issues with the adaptation of the standard Solovay proof to the case of systems of bounded arithmetic. Note that the question of what the provability logic of systems of bounded arithmetic such as $\mathsf{S}^1_2$ is is a well-known problem in the field of provability logic \citep[Section 5]{problems_beklemishev-visser}. Namely, Solovay's construction essentially relies on considerations of statements about comparisons of smallest G\"odel numbers of proofs of two given sentences. Those comparison sentences must be such that their truth implies their provability within the theory being analyzed. The latter is problematic for systems of bounded arithmetic, since the natural complexity class in which the comparison sentences reside is $\exists \Pi_1^b$, while the provability predicate is only $\exists\Sigma_1^b$-complete. That is, the issue is that the quantifier over all smaller G\"odel numbers of proofs has a range that is too large. The proper analogue of this in the set-theoretic language is the distinction between set-sized range (bounded quantifiers) and quantifiers whose range is a proper class. When we use the surjection from the ordinals onto $V$, we enforce the fact that our enumeration of proofs is such that all initial segments of the enumeration are set-sized.\end{remark}


The previous result allows us to isolate some cases in which the nested sequent calculus
$\dgla$ is complete with respect 
to the semantics provided by the interpretations $(\cdot)^*$.

\begin{definition}
    We say that an ordinal $\alpha$ is \emph{stable} if $L_\alpha$ is
    a $1$-elementary submodel of the constructible universe $L$.
\end{definition}

\begin{remark}\label{rem:stab-ord}
    It is a known fact that for a \emph{countable} stable ordinal $\alpha$,
    $L_\alpha$ is actually $1$-elementarily equivalent to $V$ (this is essentially due to
    Shoenfield's Absoluteness Theorem).
\end{remark}

We are finally able to prove the desired completeness result for $\dgla$. Before doing this, we remark
that for the following result we have to use a stronger metatheory than in the rest of the last two sections,
namely, we assume that Axiom $\beta$ holds.

\begin{theorem}\label{theo:compl-di-syst}
    Suppose Axiom $\beta$ (Mostowski collapsing theorem) holds and $A=L_\alpha$ for a countable stable ordinal $\alpha$, and let $\ttt$ be a $\Sigma[A]$-sound $A$-theory
    extending $\kp[A]+V=L$. Then for every formula $B$ of the modal language $\langl[A]^{\Box}$,
    if $\dgla\not\vdash B$,
    then there is an interpretation $(\cdot)^*$ such that $\ttt\not\vdash^A  B^*$.\footnote{Technically speaking $\langl[A]^\Box$ formulas are not the formulas of the language of $\dgla$, because of a difference in the selection of the base connectives. Thus, when we write $\dgla\not\vdash B$ we mean that $\dgla$ does not prove a singleton sequent consisting of the natural translation of $B$ to the language of the modal Tait calculus.}
\end{theorem}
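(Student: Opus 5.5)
The plan is to reduce the statement to \Cref{theo:km-compl}. That theorem needs two things: a conversely well-founded Kripke model $\mathcal{K}\in A$ with $\mathcal{K}\not\models B$, and a $\Sigma[A]$-definable surjection from the ordinals onto $V$, provably in $\ttt$.

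The surjection part is routine. Since $\ttt$ extends $\kp[A]+V=L$, the usual $\Sigma_1$ enumeration of $L$ along the ordinals, $\alpha\mapsto$ the $\alpha$-th element of $L$ in the canonical well-order $<_L$, is $\Sigma$-definable in $\kp+V=L$. It is provably total and onto $V$, so it serves as the function $e$. $\Sigma[A]$-soundness is assumed outright. What remains is to produce the countermodel inside $A=L_\alpha$.

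For the countermodel I start externally. Suppose $\dgla\nvdash B$ (as a singleton sequent in Tait form). By \Cref{dgla-completeness} there is a transitive conversely well-founded Kripke model $\kcal$ and a world $w$ with $\kcal,w\nmodels B$. Inspecting the proof, we may take $\kcal=\kcal^t_\Gamma$ for a cosaturated general nested sequent $\Gamma$ built from subformulas of $B$. Its worlds are occurrences in a conversely well-founded countable tree, so $\kcal$ is countable: $B\in L_\alpha$ is countable, hence $\sub B$ is countable and the branching is countable.

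Now consider the statement
\[
\exists \kcal\,\bigl(\kcal \text{ is a transitive Kripke model supporting } B,\ (W,\R)\text{ conversely well-founded},\ \exists w\in W\ \kcal,w\nmodels B\bigr).
\]
I want this to be equivalent to a $\Sigma_1$ statement with parameter $B$. Satisfaction $\kcal,w\models C$ for $C\in\sub B$ is $\Delta_1$ in $\kcal$ and $B$. Converse well-foundedness is $\Pi_1$, but I witness it by a co-ranking function into the ordinals, which is $\Sigma_1$. Existence of a ranking function for a set-sized well-founded relation is where Axiom $\beta$ / Mostowski collapse is used in the metatheory.

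This gives a true $\Sigma_1$ sentence with parameter in $L_\alpha$. Two transfers take it into $L_\alpha$:
\begin{itemize}
\item Shoenfield absoluteness (as in \Cref{rem:stab-ord}) makes it true in $L$. Replacing $\kcal$ by a countable model coded by a real, the existence of such a countermodel is $\Sigma^1_2$ in a real parameter coding $B$, and $B\in L$.
\item Stability, $L_\alpha\prec_1 L$, then yields a witness $\kcal\in L_\alpha=A$ together with a co-ranking function in $A$.
\end{itemize}
The witness in $A$ is genuinely conversely well-founded, because the co-ranking function is an honest map into ordinals. It is transitive, and $\kcal\not\models B$ by $\Delta_0$-absoluteness of satisfaction. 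Applying \Cref{theo:km-compl} then gives the required interpretation $(\cdot)^*$ with $\ttt\nvdash^A B^*$.

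The main obstacle is the transfer step. First, the countermodel from \Cref{dgla-completeness} lives in $V$, while $B$'s coding and $\langba$ versus $\langl[A]^\Box$ must be matched; the footnote's translation has to commute with satisfaction, which is routine. Second, and more delicate, the statement must be $\Sigma_1$ over $L$ with only parameters from $L_\alpha$. I would first try to express "there is a countable countermodel" as $\Sigma^1_2(B)$ via real codes and apply Shoenfield to pass from $V$ to $L$. If this is awkward, the fallback is to collapse the countable model via Axiom $\beta$ and argue directly that $\Sigma_1$ facts true in $V$ with parameters in $L_\alpha$ hold in $L_\alpha$, using \Cref{rem:stab-ord}.
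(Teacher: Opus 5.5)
Your proposal is essentially the paper's proof. The countermodel comes from \Cref{dgla-completeness}, Axiom $\beta$ makes converse well-foundedness $\Sigma_1$ via a co-ranking function, stability pulls the countermodel into $A=L_\alpha$, $V=L$ supplies the surjection, and \Cref{theo:km-compl} finishes; your Shoenfield-then-stability step just unpacks \Cref{rem:stab-ord}, which the paper cites directly. One point to tighten in that unpacking: Shoenfield transfers the $\Sigma^1_2$ statement from $V$ to $L$ only if some real coding $B$ lies in $L$, i.e.\ $B$ is countable in $L$ (guaranteed when $\alpha\le\omega_1^L$), and $B\in L$ alone does not give this. The paper's \Cref{rem:stab-ord} tacitly relies on the same condition.
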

\begin{proof}
    We first observe that, under our assumption of countability of $A$, the modal
    language $\langl[A]^{\Box}$ is a subset of $\langba$, so the statement of the theorem makes sense.

    Let $B$ be as in the assumptions. Using the countability of $A$, we can use
    \Cref{dgla-completeness} to build a rooted conversely well-founded Kripke model
    $\mathcal{K}$ such that $\mathcal{K}\not\models B$. Notice that, technically speaking, what \Cref{dgla-completeness}
    gives us is a countermodel for the sequent $\{ B\}$, but it is immediate to see that this can
    be regarded as a countermodel to $B$ in the sense we are interested in.

    Thanks to our assumption that Axiom $\beta$ is available in the metatheory, we have that the property of being
    well-founded can be expressed in a $\Sigma_1$ way (i.e., via the existence of a rank function). Moreover, as we have already observed, the relation $\mathcal{K}\models B$ is $\Delta_1$ (with parameters from $A$), and thus so is $\mathcal{K}\not\models B$.
    Hence, the existence of a Kripke model $\mathcal{K}$ as above can be expressed in a $\Sigma_1$ way.
    Since such a $\mathcal{K}$ exists in $V$, using \Cref{rem:stab-ord} we can conclude that there is a $\mathcal{K}'\in A$
    with the same properties.

    As we have already observed, the assumption that $\ttt\vdash^A  V=L$ entails that $\ttt$
    proves the existence of a surjection from the ordinals onto the universe.
    We can then conclude by applying \Cref{theo:km-compl}.
\end{proof}


\section{Appendix: technicalities concerning the development of provability logic
with an admissible class}\label{sec:proof-gandy-classes}

We start by presenting a proof of \Cref{thm:ind-def}.

\begin{proof}

To enhance readability, we will assume that the formula $\varphi$ has one variable $x$. This entails that the predicate letter $S$ is unary. Hence, what we want to find is a $\Sigma^A$ formula $\mathsf{Fix}_\varphi(x)$ such that $\kp + \mathsf{Tr}(A) + \kp^A\vdash \forall x\in A(\mathsf{Fix}_\varphi( x) \biimp \varphi( x,\lambda {x}.\mathsf{Fix}_\varphi( x)))$.

We will use several times the fact that if $S$ occurs only positively in $\varphi(\vec x, S)$, then for any two formulas $R(x)$ and $V(x)$, $\kp + \mathsf{Tr}(A) + \kp^A$ proves  \[\forall x (R(x)\imp V(x))\imp \forall x(\varphi(x,\lambda x. R(x))\imp \varphi(x,\lambda x.V(x))),\]
which can be shown by a straightforward induction on the complexity of $\varphi$.

The main idea of the proof is that we will build the desired fixed point by approximating it from below. To this end, we define the formula $\appr_\varphi(s)$ as
\[
  \text{Func}(s) \wedge \dom(s) \in \mathsf{Ord} \wedge \zeta(s),
\]
where $\text{Func}(s)$ is the $\Delta$ formula asserting that $s$ is a function, and the auxiliary formula $\zeta(s)$ is defined as follows:
\[
  \forall \alpha\in \dom(s) \forall  x\in s(\alpha) ( \varphi(x,\lambda x.x\in\bigcup_{\beta<\alpha}s(\beta)).
\]
Technically speaking, we are abusing notation in the formula above, since if $s$ is not a function then $\dom(s)$ and $s(\beta)$ have no meaning. But since in these cases $\appr_\varphi(s)$ is false anyway, we ignore these trivialities.

Notice that, thanks to the assumption that the predicate $S$ in $\varphi$ occurs positively, substituting $S$ with the formula $x\in \bigcup_{\beta<\alpha}s(\beta)$ produces a $\Sigma^A$ formula with parameter $s$. In turn, this implies that $\appr_\varphi(s)$ is a $\Sigma^A$ formula.

From now on, we will shorten $\varphi(x,\lambda x.x\in\bigcup_{\beta<\alpha}s(\beta))$ to $\varphi(x,\bigcup_{\beta<\alpha}s(\beta))$, for added clarity.

We claim that the $\Sigma^A$ formula $\mathsf{Fix}_\varphi(x)$ defining the desired fixed point is
\[
  \exists s\in A( \appr_\varphi(s) \wedge \exists \alpha \in \dom(s) (x\in s(\alpha))).
\]

We start by showing, in $\kp + \mathsf{Tr}(A) + \kp^A$, that $\fixp_\varphi(x)\biimp \varphi( x,\lambda {x}.\fixp_\varphi( x))$.

We reason in $\kp + \mathsf{Tr}(A) + \kp^A$.
Suppose that $\fixp_\varphi(x)$ holds; then there exists a function $s$ such that $\appr_\varphi(s)$ holds and, for some $\alpha\in \dom(s)$, $x\in s(\alpha)$. Hence, $\varphi(x,\bigcup_{\beta<\alpha}s(\beta))$ holds. Moreover, notice that $y\in\bigcup_{\beta<\alpha}s(\beta)$ trivially implies $\fixp_\varphi(y)$, as witnessed by $s$ itself. Hence, we have that $\fixp_\varphi(x)\imp \varphi( x,\lambda {x}.\fixp_\varphi( x))$, giving the left-to-right implication.

Suppose now that $\varphi( x,\lambda {x}.\fixp_\varphi( x))$ holds. By the principle of $\Sigma^A$-Reflection, available in $\kp^A$, we have that there exists an $a\in A$ such that the relativization $\varphi^a( x,\lambda {x}.\fixp^a_\varphi( x))$ also holds. Let us define $U=\{ s\in a: \appr^a_\varphi(s)\}$, and write $\delta_{s_0}=\bigcup_{s\in U} \dom(s)$ for brevity. Let $s_0^-$ be the function defined as having domain equal to $\delta_{s_0}$, and setting $s_0^-(\alpha)=\bigcup_{s\in U}s(\alpha)$ for every $\alpha\in\dom(s_0^-)$. Finally, we define the function $s_0$ as $s_0^-\cup \{ \langle \delta_{s_0},\{x\}\rangle\}$, i.e.\ $s_0$ maps the ordinal $\delta_{s_0}$ to $x$.

We claim that $s_0$ witnesses $\fixp_\varphi(x)$, which would prove the right-to-left implication. Since obviously $\exists \alpha\in \dom(s_0)(x\in s_0(\alpha))$ holds, we only have to show that $\appr_\varphi(s_0)$ holds too, which boils down to verifying $\zeta(s_0)$, i.e., that $\forall \alpha\in \dom(s_0) \forall  y\in s_0(\alpha) ( \varphi(y,\bigcup_{\beta<\alpha}s_0(\beta))$. We distinguish two cases.
\begin{itemize}
\item Suppose $\alpha<\delta_{s_0}$, and fix $y\in s_0(\alpha)$. Then there is $s\in U$ such that $y\in s(\alpha)$. Since $\appr_\varphi^a(s)$ holds, so does $\appr_\varphi(s)$, and hence $\varphi(y,\bigcup_{\beta<\alpha}s(\beta))$ holds. Since clearly $\bigcup_{\beta<\alpha}s(\beta)\subseteq \bigcup_{\beta<\alpha}s_0(\beta)$, we have that $\varphi(y,\bigcup_{\beta<\alpha}s_0(\beta))$ holds, as we wanted.
\item Suppose instead that $\alpha= \delta_{s_0}$, we need to show that $\varphi(x,\bigcup_{\beta<\alpha}s_0(\beta))$ holds. Since we know that $\varphi^a( x,\lambda {x}.\fixp^a_\varphi( x))$ holds, it suffices to show that, for every $y\in a$, $\fixp_\varphi^a(y)$ implies $y\in \bigcup_{\beta<\alpha}s_0(\beta)$. This follows from the fact that $\fixp_\varphi^a(y)$ implies that there is $s\in U$ with $y\in \ran s$.
\end{itemize}
This argument shows that $\kp + \mathsf{Tr}(A) + \kp^A$ proves $\fixp_\varphi(x)$ to be a fixed point of $\varphi$. 

We are only left to show that $\fixp_\varphi(x)$ describes the minimal upper pre-fixed point of $\varphi$. Suppose not, and let $\theta(x)$ be a formula in the expanded language (i.e., we put no requirements on its complexity, nor on the fact that all the quantifiers are bounded by $A$). Now we reason in $\kp+\mathsf{Tr}(A)+\kp^A$ and assume for a contradiction that $\forall x\in A ( \varphi( x,\lambda {x}.\theta( x))\imp \theta( x))$, yet there is some $x$ such that $\fixp_\varphi(x)\wedge \neg\theta(x)$ holds. By definition, there is a function $s$ such that $\appr_\varphi(s)$ holds and $x\in s(\beta)$ for some $\beta\in \dom(s)$. Using Foundation for formulas in the expanded language, we can find the minimal $\alpha\in\dom(s)$ such that for some $y\in \ran(s)$ we have $\neg\theta(y)$. Hence, $\varphi(y,\bigcup_{\beta<\alpha}s(\beta))$ holds by the definition of $\appr_\varphi$. But we also have that for every $z\in \bigcup_{\beta<\alpha}s(\beta)$, by the minimality of $\alpha$, $\theta(z)$ holds, from which we conclude that $\varphi(y,\lambda y.\theta(y))$ holds, and hence also $\theta(y)$. This gives the desired contradiction.
\end{proof}



Next, to demonstrate how the definitions of infinitary formal languages can be carried out as inductive definitions over $A$, we give a more formal presentation of the definition of the language $\langl[A]$.
\begin{definition}\label{def:langl}
  We fix some  $11$ distinct elements of $\mathbb{HF}$, which we denote by \[\lnot,\to,\forall,\exists,\bigwedge,\bigvee,\mathsf{var},\mathsf{cnst}^{\ulcorner\urcorner},\mathsf{atm},\mathsf{=},\mathsf{\in};\] for the sake of completeness, let us take the finite ordinals $1,\ldots,11$ for this role.
  \begin{enumerate}
      \item The class of $\langl[A]$-variables consists of all the pairs $v_a=\langle \mathsf{var},a\rangle$, where $a\in A$.
      \item The class of $\langl[A]$-constants consists of all the pairs $\Godelnum{a} = \langle \mathsf{cnst}^{\ulcorner\urcorner},a\rangle$.
      \item The class of $\langl[A]$-terms is the union of the classes of constants and variables.
      \item The class of atomic $\langl[A]$-formulas consists of all the tuples $\langle \mathsf{atm},\langle =,\langle t_0,t_1\rangle\rangle \rangle$ (representing $t_0=t_1$) and $\langle \mathsf{atm},\langle \in,\langle t_0,t_1\rangle\rangle \rangle$ (representing $t_0\in t_1$), where $t_0,t_1$ are $\langl[A]$-terms.
  \end{enumerate}

  We let $\langl[A]$ be the smallest class such that:
  \begin{enumerate}
  \item $\langl[A]$ contains all the atomic $\langl[A]$-formulas;
  \item\label{item:lang2} $\varphi\in \langl[A]$ $\Rightarrow$
  $\langle \neg, \varphi\rangle\in \langl[A]$;
  \item\label{item:langl-impl} $\varphi,\psi\in \langl[A]$ $\Rightarrow$
  $\langle \imp, \langle \varphi, \psi\rangle\rangle\in \langl[A]$;
  \item\label{item:lang3} $\varphi\in \langl[A]$ and $x$ is an $\langl[A]$-variable $\Rightarrow$
  $\langle\forall,\langle x, \varphi\rangle \rangle\in \langl[A]$;
   \item\label{item:lang4} $\varphi\in \langl[A]$ and $x$ is an $\langl[A]$-variable $\Rightarrow$
  $\langle\exists, \langle x, \varphi\rangle \rangle\in \langl[A]$;
  \item $\Phi\in A$ and $\Phi\subseteq \langl[A]$ $\Rightarrow$ $\langle \bigwedge, \Phi\rangle\in \langl[A]$;
  \item $\Phi\in A$ and $\Phi\subseteq \langl[A]$ $\Rightarrow$ $\langle \bigvee, \Phi\rangle\in \langl[A]$.
  \end{enumerate}
\end{definition}
Applying Gandy's theorem to the inductive definition of $\langl[A]$ above, we conclude that it gives a $\Sigma^A$ class over $\kp+\mathsf{Tr}(A)+\kp^A$. It is often convenient to furthermore know that $\langl[A]$ is $\Delta^A$. This follows from the fact that a $\Sigma^A$-deciding function for the class can be defined by straightforward transfinite recursion.


\begin{remark}
    In the definition above, we have been quite formal, explicitly using the code of formulas to
    express them. In order to enhance readability, in the following, we will not do so,
    and mostly use standard shortcuts to denote formulas of $\langl[A]$. For instance,
    we will write $\Godelnum{a}\in\Godelnum{b}$ to mean the $\langl[A]$-formula $\langle \mathsf{atm},\langle \in, \langle\langle \mathsf{cnst}^{\ulcorner\urcorner}, a \rangle,\langle \mathsf{cnst}^{\ulcorner\urcorner}, b \rangle\rangle\rangle\rangle$.
\end{remark}




\begin{acknowledgement}
The authors gratefully acknowledge the fruitful discussions with Lev Beklemishev and Philipp Provenzano.

The work of Fedor Pakhomov has been funded by the FWO grant G0F8421N. 
The work of Giovanni Sold\`a has been funded by the FWO senior post-doctoral fellowship 1257725N. 
Mojtaba Mojtahedi is partially funded by the BOF grant BOF.STG.2022.0042.01. Also Mojtaba Mojtahedi was 
partially supported by the Odysseus programme from the Research Foundation-Flanders (FWO), 
project G0ASI25N.

We remark that the authors used Claude Opus 4.8 and GPT 5.5 in the final stages of the writing process, as a reviewing tool: some minor adjustments of the paper are due to it.

\end{acknowledgement}

\end{document}